\documentclass [12pt,reqno]{amsart}
\usepackage{times}
\usepackage{mathtools}
\usepackage[toc,page]{appendix} 
\usepackage{bm}
\usepackage{pifont}
\allowdisplaybreaks
\numberwithin{equation}{subsection}

\usepackage{amssymb,latexsym,amsmath,amsfonts, eucal}
\usepackage[usenames,dvipsnames,svgnames,table]{xcolor}
\DeclareMathOperator{\rng}{rng}

\newtheorem{thm}{Theorem}[section]
      \newtheorem{lemma}[thm]{Lemma}
      
      \newtheorem{cor}[thm]{Corollary}
      
      \newtheorem{example}[thm]{Example}
      
      \newtheorem{rmk}[thm]{Remark}
      
      \numberwithin{equation}{section}
\title [ vector valued de Branges spaces]{ Vector valued de Branges spaces, cnu contractions and functional models}

\author[Garg]{Bharti Garg}

\address{
	Department of Mathematics\\
	Indian Institute of Technology Ropar\\
	140001\\
	India}

\email{ {bharti.20maz0012@iitrpr.ac.in},{bhartigargfdk@gmail.com}}

\author[Sarkar]{Santanu Sarkar}

\address{
	Department of Mathematics\\
	Indian Institute of Technology Ropar\\
	140001\\
	India}

\email{ {santanu@iitrpr.ac.in},{ santanu87@gmail.com}}

\begin{document}
\subjclass[2020]{46E22, 46E40, 47A56}

\keywords{Vector valued de Branges spaces, reproducing kernel Hilbert spaces, cnu contraction operators, functional model}

\begin{abstract}
In this paper, we study vector valued de Branges spaces associated with a de Branges operator, defined as a pair of Fredholm operator valued analytic functions on a domain symmetric with respect to the unit circle. Using a suitable direct sum decomposition of a Hilbert space, we construct a class of vector valued reproducing kernel Hilbert spaces and show that under some assumptions these are vector valued de Branges spaces. We further demonstrate that these spaces provide  functional models for certain class of completely non-unitary contraction operators. We also give a Fredholm-type criterion for verifying the hypotheses of the main construction and apply it to several concrete classes of completely non-unitary contractions. Next, we establish connections between the Sz.-Nagy-Foias characteristic function of the contraction operator, the projection operator valued function arising from the Hilbert space decomposition, and the reproducing kernel of the de Branges space. In particular, we show that the characteristic function coincides with the projection operator valued function on the unit disc. Enroute, we also obtain a complete unitary invariance of a certain class of cnu contractions in terms of de Branges quotient operator valued functions. Finally, we discuss certain aspects of the canonical contraction in de Branges model and its $L^2$ realization. These results provide a new perspective on the role of vector valued de Branges spaces in operator model theory.
\end{abstract}
\maketitle

\tableofcontents
\section{Introduction}
\label{Section 1}
L. de Branges introduced the Hilbert spaces of entire functions through three fundamental axioms, which are now known as de Branges spaces. Motivated by the classical scalar valued Paley-Wiener spaces, de Branges studied spaces of scalar valued entire functions, beginning with his seminal work \cite{brange} and applied it to problems in inverse spectral theory for canonical differential systems. A comprehensive treatment can be found in his monograph \cite{Brange}. These spaces were later generalized to the vector valued set up of entire and meromorphic functions in his subsequent works (see \cite{Branges 1}, \cite{Branges 2}, \cite{Branges 3}, \cite{Branges 4}). 
These spaces of entire functions have close connections with the M. G. Krein's theory of entire operators with finite and equal, as well as infinite, deficiency indices (see \cite{kreinlect}).

The de Branges spaces of $\mathbb{C}^n$ valued entire functions corresponding to matrix valued reproducing kernels have been studied extensively by Arov and Dym in their work. These spaces are found to have applications to various analytical problems, including direct and inverse problems for canonical differential systems and Dirac-Krein systems, prediction theory for stationary Gaussian processes and inverse spectral problems for Feller-Krein string equation (see \cite{DM76}). A comprehensive treatment can be found in their excellent monographs (see \cite{ArD08}, \cite{multi}). The connection between these de Branges spaces and M. G. Krein's class of entire operators with finite and equal deficiency indices was established in \cite{sarkar}. Recently, Derkach and Dym studied the theory of entire symmetric and isometric operators in rigged de Branges-Pontryagin spaces of $\mathbb{C}^n$ valued entire functions (see \cite{dd2}, \cite{dd}, \cite{dd3}, \cite{dd4}). Dym in \cite{DymJFA} investigated the two classes of vector valued de Branges spaces, namely $\mathcal{H}(U)$ and $\mathcal{B}(\mathfrak{E})$ spaces in the framework of matrix valued reproducing kernels over both the upper half-plane and the unit disc. More recently, Dhara and Dym in \cite{DD} established the connections between de Branges spaces of $\mathbb{C}^n$ valued entire functions in the setting of the open upper half plane and the open unit disc, with the solutions of truncated matrix Hamburger moment problem and the truncated matrix trigonometric moment problem, respectively. 

In the present paper, we study the vector valued de Branges spaces $\mathcal{B}(\mathfrak{E})$ based on a de Branges operator, which is a pair of Fredholm operator valued analytic functions defined on a domain which is symmetric with respect to unit circle and contains the open unit disc. 
Our main objective is to investigate how $\mathcal{B}(\mathfrak{E})$ spaces serve as functional models for certain completely non-unitary (cnu) contraction operators. The corresponding spaces with respect to the upper half plane were investigated in \cite{mahapatra}, where it was observed that these spaces serve as functional models for a M. G. Krein's class of entire operators with infinite deficiency indices. We refer to \cite{mahapatra2}, where these spaces were shown to generalize vector valued Paley-Wiener spaces and where further aspects of their structure were also discussed. Further connections with the quasi-Lagrange type interpolation were recently obtained in \cite{mahapatra3}. We also refer to \cite{GS}, where the relationship between another class of vector valued de Branges spaces $\mathcal{H}(U)$ corresponding to operator valued reproducing kernel and a class of simple, closed, densely defined, symmetric operators with infinite deficiency indices is explored in the setting where domain is symmetric with respect to the real line. The primary motivation of our objectives comes from M. G. Krein's theory of entire operators, and the extension theory of Straus. Also, we want to see how these fit with the Sz.-Nagy-Foais theory for certain cnu contractions. See for instance, \cite{fundamentalkrein}, \cite{Straus123}, \cite{Straus}, \cite{Nagy-Foais}. 

\subsection{Organization of the paper}
The paper is organized as follows. Section \ref{Section 1} contains the introduction, including a brief historical background, the organization of the paper, and the notations used throughout. The preliminary material required for the paper, including the basic definitions of vector valued RKHS, cnu contraction operators $T$ acting on a complex separable infinite dimensional Hilbert space $H$, and the vector valued de Branges spaces $\mathcal{B}(\mathfrak{E})$ corresponding to operator valued reproducing kernel, is presented in Section \ref{Section 2}. In this section, we also discuss a Hardy space characterization of $\mathcal B(\mathfrak{E})$ spaces in the disc setting. 
In Section \ref{Section 3}, we establish a direct sum decomposition of the Hilbert space $\mathcal{H}= H \oplus H$ into the components $(V_0-zI)(\ker V)^{\perp}$ and the infinite dimensional closed subspace $((V_0-aI)(\ker V)^{\perp})^{\perp}$, for all $z$ in the domain $\Omega_{a}$ defined in \eqref{Omega}. Here, the operators $V_0$ and $V$ denote the isometry and partial isometry, respectively, constructed from $T$ as given in \eqref{V0} and \eqref{V}, and $a$ is a resolvent point of $T$ on the unit circle. The idea and motivation of this decomposition comes from the \textit{Straus extension} of symmetric operators. For reference, see \cite{Straus}.
In Section \ref{Section 4}, we construct an abstract vector valued RKHS $\mathbf{H}$ using this direct sum decomposition of $\mathcal{H}$ and show that this space is a vector valued de Branges space associated with a de Branges operator $(E_-(z), E_+(z))$. We further prove that the de Branges space $\mathbf{H}$ constructed in this way serves as functional model for the isometric operator $V_0$, which leads to a functional model for the cnu contraction operator $T$. We also give a Fredholm-type criterion for verifying the hypotheses of the main construction and apply it to several concrete classes of cnu contractions, including diagonal essentially unitary contractions, non-normal block weighted-shift-type operators, and compressed shifts on scalar and vector valued model spaces.
In Section \ref{Section 5}, we establish the connections between the Sz.-Nagy-Foias characteristic function of $T$, the $((V_0-aI)(\ker V)^{\perp})^{\perp}$-valued projection operator $P_Y(z)$ used to construct the de Branges space $\mathbf{H}$ and the corresponding de Branges operator valued reproducing kernel. In particular, we show that the operator valued functions $\Theta_{T}(z)$ and $P_Y(z)$ coincide on $\mathbb{D}$. We also prove that, for the class of cnu contractions considered in this paper, unitary equivalence is completely characterized by the coincidence of the corresponding operator valued functions \(E_+^{-1}E_-\) arising from the associated de Branges operators.
In the final section, we study a canonical contraction associated with the de Branges operator obtained from the preceding construction. We first prove that the quotient $\Theta_\mathfrak E=E_+^{-1}E_-$
is purely contractive. Under the additional assumption that the de Branges operator is entire, we show that the Sz.-Nagy--Foias characteristic function of this canonical contraction coincides with \(\Theta_\mathfrak E\). Finally, we describe the larger \(L^2\)-ambient space, which gives a unitary dilation picture for this canonical contraction and connects the construction with the Arov--Dym framework.

\subsection{Motivation of the paper}

The motivation for the present work comes from the interaction between two
important model-theoretic frameworks in operator theory. On one hand, de Branges
spaces and their vector valued analogues provide reproducing kernel Hilbert space
models for certain classes of symmetric, isometric and entire operators. In this
direction, the works of M. G. Krein, de Branges, Arov, Dym and others show that the
structure of such operators can be encoded by analytic operator valued functions
and by the corresponding reproducing kernels. On the other hand, the classical
Sz.-Nagy--Foias theory provides a functional model for cnu
contraction operators in terms of their characteristic functions. Although both
theories describe operators through analytic data, the precise connection between
these two points of view is not immediate, especially in the setting of vector
valued de Branges spaces over domains symmetric with respect to the unit circle.

The purpose of this paper is to make this connection explicit for a natural class
of cnu contractions. Starting with a cnu contraction \(T\) having
at least one resolvent point on the unit circle, we associate with \(T\) an isometry
\(V_{0}\) and a partial isometry \(V\) acting on \( H\oplus  H\).
The existence of a point \(a\in \rho(T)\cap \mathbb T\) allows us to construct a
Straus-type direct sum decomposition involving the spaces
\[
        (V_{0}-zI)(\ker V)^{\perp}
        \quad \text{and} \quad
        \big((V_{0}-aI)(\ker V)^{\perp}\big)^{\perp}.
\]
This decomposition is the main geometric ingredient of the paper. It gives rise to
a projection operator valued analytic function \(P_{Y}(z)\), and hence to a vector
valued RKHS of analytic functions.

Our aim is to realize a class of such cnu contractions within the framework of vector valued de
Branges spaces. We prove that under some assumptions, the RKHS obtained from the above decomposition is a vector valued de Branges space \(\mathcal{B}(\mathfrak{E})\) associated
with a suitable de Branges operator
\[
       \mathfrak{E}=(E_{-},E_{+}).
\]
Thus the contraction \(T\) is realized as a compressed multiplication operator on a
de Branges space. In this way, the construction gives a de Branges space
realization of certain cnu contractions and provides a concrete bridge between the
Straus extension method and the Sz.-Nagy--Foias model theory.

A further motivation is to understand how the analytic data obtained from the
de Branges construction is related to the classical characteristic function of \(T\).
We show that the projection operator valued function \(P_{Y}(z)\), which arises
naturally from the direct sum decomposition, coincides on the unit disc with the
Sz.-Nagy--Foias characteristic function \(\Theta_{T}(z)\), after the natural
identifications of the corresponding spaces. Hence the characteristic function is
recovered from the de Branges-space construction. This shows that the de Branges
operator constructed in the paper contains the same model-theoretic information as
the characteristic function.

Consequently, for the class of cnu contractions considered here, unitary
equivalence can be characterized in terms of the corresponding de Branges
operators through the operator valued function \(E_{+}^{-1}E_{-}\). This places
vector valued de Branges spaces within the functional model theory of cnu
contractions and also provides a systematic source of examples.

\subsection{Notations}
The following notations will be used throughout the paper: 
\begin{itemize} 
\item $\mathbb{C},$ $\mathbb{T}$, and $\mathbb{D}$ denote the complex plane, the unit circle, and the unit disc, respectively. 
\item  $I$ denotes the identity operator on some Hilbert space.
\item $H$ denotes a complex separable infinite dimensional Hilbert space.
\item $B(H)$ denotes the space of all bounded linear operators on Hilbert space $H$.
\item $K(H)$ denotes the space of all compact linear operators on Hilbert space $H$.
\item $\rho_w(z)= 1-z\bar{w}$.
\item $B_{\varepsilon}(a)$ denotes the open ball of radius $\varepsilon$ centered at $a$ in $\mathbb{C}$, defined by
\[
B_{\varepsilon}(a)=\{z\in\mathbb{C}:|z-a|<\varepsilon\}.
\]
\item $C_{\varepsilon}(a)$ denotes the arc on the unit circle $\mathbb{T}$ obtained by intersecting the open ball $B_{\varepsilon}(a)$ with $\mathbb{T}$, i.e.,
\[
C_{\varepsilon}(a)
=
B_{\varepsilon}(a)\cap \mathbb{T}
=
\{z\in \mathbb{T}: |z-a|<\varepsilon\}.
\]
\item For an operator $A$;
\begin{itemize}
\item [(i)]$A^*$ denotes the adjoint operator.
\item [(ii)]$A\succeq 0$ denotes that $A$ is positive semi-definite.
\item [(iii)] $\mathrm{rng} (A)$, $\ker (A)$, and $\mathcal{D}(A)$  denote the range, kernel, and domain of $A$, respectively.
\item [(iv)] $\rho(A)$ denotes the resolvent set of $A$.
\item[(v)] A point $\alpha$ is said to be a point of regular type for $A$ if there exists a positive constant $c_\alpha$ such that $$\Vert (A-\alpha I )g\Vert \geq c_\alpha \Vert g \Vert \hspace{0.3cm} \text{for all} \hspace{0.2cm} g \in \mathcal{D}(A).$$
\end{itemize}
\item $R_z$ denotes the generalized backward shift operator of $H$-valued functions and is defined by \begin{equation}
    (R_z g)(\xi) := \left\{
   \begin{array}{ll}
         \frac{g(\xi)-g(z)}{\xi-z}  & \mbox{if }~ \xi \neq z \vspace{0.1cm} \\ 
        g'(z) & \mbox{if }~ \xi = z
   \end{array} \right.
\end{equation}
for every $z,~ \xi \in \mathbb{C}$.
\end{itemize}
\section{Preliminaries}
\label{Section 2}
In this section, we recall all the basic definitions and preliminaries required for the paper.
\\
\textbf{I.~Reproducing kernel Hilbert spaces and positive kernels:}

First, we recall the definition of vector valued RKHS and of positive kernel functions.

A Hilbert space $\mathcal{H}$ of $H$-valued functions defined on a nonempty set $\Omega \subseteq \mathbb{C}$ is called a reproducing kernel Hilbert space (RKHS) if there exists a $B(H)$-valued function $K_{\omega}(\lambda)$ defined on $\Omega \times \Omega$ satisfying the following properties:

\begin{itemize}
\item For every $u \in H$ and $\omega \in \Omega$, the function $K_{\omega}u$ belongs to $\mathcal{H}$.
\item For every $f \in \mathcal{H}$, $u \in H$, and $\omega \in \Omega$, the reproducing property holds:
\[
\langle f, K_{\omega}u \rangle_\mathcal{H} 
= 
\langle f(\omega), u \rangle_H.
\]
\end{itemize}
The function $K_{\omega}(\lambda)$ is uniquely determined and is called the reproducing kernel (RK) of the Hilbert space $\mathcal{H}$. Let $\delta_{\omega}$ denote the evaluation operator at $\omega$, defined by $\delta_{\omega}(f)=f(\omega)$. Then the reproducing kernel admits the representation
\[
K_{\omega}(\lambda)= \delta_{\lambda}\delta_{\omega}^*.
\]
A function $K:\Omega \times \Omega \to B(H)$ is called a positive kernel if, for any $n \in \mathbb{N}$, any choice of points $\omega_1,\ldots,\omega_n \in \Omega$, and vectors $u_1,\ldots,u_n \in H$, the inequality
\[
\sum_{i,j=1}^{n} 
\langle K_{\omega_j}(\omega_i)u_j , u_i \rangle 
\ge 0
\]
holds.
It follows directly from the definition that the reproducing kernel of an RKHS is always a positive kernel. Moreover, the vector valued version of Moore's theorem (see \cite[Theorem 6.12]{Paulsen}) states that every positive kernel determines a unique RKHS of vector valued functions for which it acts as the reproducing kernel.

For a detailed discussion on RKHS, see \cite{Aronszajn 1950} and \cite{Paulsen}.
\\
\textbf{II.~Contraction operators, defect spaces and characteristic functions:}

A bounded linear operator $T$ on $H$ is called a contraction operator if $\Vert T \Vert \leq 1$. A contraction $T$ on $H$ is called completely non-unitary (cnu) if there does not exist a non-zero closed subspace reducing $T$ such that restriction of $T$ to that subspace is unitary.  The defect operators of $T$ are defined by
\[
D_T = (I-T^*T)^{1/2}
\quad \mbox{and} \quad
D_{T^*} = (I-TT^*)^{1/2},
\]
and the defect spaces associated with $T$ are defined by
\[
\mathcal{D}_T = \overline{\rng(D_T)}
\quad \mbox{and} \quad
\mathcal{D}_{T^*} = \overline{\rng(D_{T^*})}.
\]
The characteristic operator valued function of $T$ is defined for $z \in \mathbb{D}$ by
\begin{equation}\label{char}\Theta_T(z):= -T+z D_{T^*}(I-zT^*)^{-1}D_T \big|_{\mathcal{D}_T}.\end{equation}
The function $\Theta_T(z)$ is an analytic operator valued function from $\mathcal{D}_T$ into $\mathcal{D}_{T^*}$.
\\
\textbf{III.~Fredholm operators and analytic Fredholm theorem:}

Next, we recall the notion of Fredholm operators that will be used in the description of vector valued de Branges space \(\mathcal{B}(\mathfrak{E})\). A bounded linear operator \( A \in  B(H) \) is called Fredholm if  \( \dim(\ker(A)) < \infty \), \( \dim(\ker(A^*)) < \infty \) and $\mathrm{rng}  (A) \) is closed in \( H \). 
The index associated with a Fredholm operator is defined by  
\begin{equation}
\text{index}(A) = \dim(\ker(A)) - \dim(\ker(A^*)).
\end{equation}

\begin{thm} \label{Theorem 2.5}
If $A \in  B(H)$ is a Fredholm operator. Then the following assertions are true.
\begin{itemize}
\item[(1)] \( A^* \) is a Fredholm operator and $ \text{index}(A) = - \text{index}(A^*). $
\item[(2)] \( A \) is invertible iff $$ \text{index}(A) = 0 \quad \text{and} \quad \ker(A) \ (\text{or} \ \ker(A^*)) = 0. $$
\end{itemize}
\end{thm}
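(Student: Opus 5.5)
For part (1), I would first check that $A^*$ is Fredholm using the closed range theorem. Since $\mathrm{rng}(A)$ is closed, $\mathrm{rng}(A^*)$ is closed as well. We have $\ker(A^*) = \mathrm{rng}(A)^{\perp}$, which is finite dimensional by hypothesis. Also $\ker((A^*)^*) = \ker(A)$ is finite dimensional. So $A^*$ satisfies all three Fredholm conditions. The index identity then follows directly from the definition:
\[
\text{index}(A^*) = \dim\ker(A^*) - \dim\ker(A) = -\,\text{index}(A).
\]
The only nontrivial input here is the closed range theorem for bounded operators between Hilbert spaces, $\overline{\mathrm{rng}(A^*)} = \mathrm{rng}(A^*)$ whenever $\mathrm{rng}(A)$ is closed. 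I would quote it, or prove it quickly: $A$ restricted to $\ker(A)^{\perp}$ is bounded below, so $A^*$ maps $\mathrm{rng}(A)$ onto $\ker(A)^{\perp}$. This step is the main obstacle, and it is standard.

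For part (2), suppose first that $A$ is invertible. Then $\ker(A)=0$ and $\ker(A^*) = \mathrm{rng}(A)^{\perp} = 0$. Hence the index is $0$ and both kernels vanish.

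Conversely, suppose $\text{index}(A)=0$ and one of the kernels is trivial. The index condition says $\dim\ker(A) = \dim\ker(A^*)$, so both kernels are $0$. Then $\mathrm{rng}(A)$ is closed with
\[
\mathrm{rng}(A)^{\perp} = \ker(A^*) = 0,
\]
so $A$ is surjective. Since $\ker(A)=0$, $A$ is also injective, hence bijective. By the bounded inverse theorem, $A^{-1}\in B(H)$.

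The statement as written, which only allows $\ker(A)$ or $\ker(A^*)$ to be trivial, is handled exactly by this equality of dimensions forced by index zero.
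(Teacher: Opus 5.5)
Your proof is correct. The paper gives no proof of this statement; it is recalled in the preliminaries as a standard fact of Fredholm theory. So there is no competing argument to compare against, and your route via the closed range theorem and the bounded inverse theorem is the standard one.

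Two small remarks.

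First, you can tighten the closed-range step. Passing from ``$A|_{\ker(A)^{\perp}}$ is bounded below'' to ``$A^*$ maps $\mathrm{rng}(A)$ onto $\ker(A)^{\perp}$'' runs as follows.
\begin{itemize}
\item[(a)] The map $B:=A|_{\ker(A)^{\perp}}:\ker(A)^{\perp}\to\mathrm{rng}(A)$ is a bounded bijection between Hilbert spaces. Here $\mathrm{rng}(A)$ is a Hilbert space because it is closed, so $B$ is invertible.
\item[(b)] The Hilbert adjoint of $B$ is $B^*=A^*|_{\mathrm{rng}(A)}$, because $\mathrm{rng}(A^*)\subseteq\ker(A)^{\perp}$.
\item[(c)] Since $A^*$ vanishes on $\mathrm{rng}(A)^{\perp}=\ker(A^*)$, it follows that $\mathrm{rng}(A^*)=B^*(\mathrm{rng}(A))=\ker(A)^{\perp}$, which is closed.
\end{itemize}

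Second, in part (1) you do not need the identity $\ker(A^*)=\mathrm{rng}(A)^{\perp}$ to conclude that $\dim\ker(A^*)<\infty$. That finiteness is already part of the paper's definition of a Fredholm operator. The identity is genuinely needed only in part (2), where you use it to get surjectivity.
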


Let us also recall the Fredholm analytic theorem that will be used below.
\begin{thm}(see \cite[Theorem 3.3]{fritz2}, \cite[XI, Corollary 8.4]{GGK}) Let $T : \Omega \rightarrow B(\mathfrak{X})$ be a Fredholm operator valued analytic function, where $\Omega$ is an open connected subset of $\mathbb{C}$. Then, exactly one of the following two conditions holds true: \begin{itemize}\item $T(\lambda)$ is not boundedly invertible for any $\lambda \in \Omega$. \item $T(\lambda)^{-1}$ is an analytic Fredholm operator valued function for all $\lambda \in \Omega \setminus D$, where $D$ in $\Omega$ is a discrete set (a countable set with no accumulation point in $\Omega$) and $T(\lambda)^{-1}$ is meromorphic on $\Omega$.\end{itemize}\end{thm}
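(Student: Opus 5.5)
The statement to be proved is the analytic Fredholm theorem for a Fredholm operator valued analytic function $T:\Omega\to B(\mathfrak{X})$ on a connected open set. The plan is the standard one: reduce to finite dimensions locally, then use connectedness to globalize.

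\emph{Local reduction.} Fix $\lambda_0\in\Omega$. Since $T(\lambda_0)$ is Fredholm, choose a finite rank operator $F$ with $T(\lambda_0)+F$ invertible. For instance, take $F=JP$, where $P$ is the orthogonal projection onto $\ker T(\lambda_0)$ and $J:\ker T(\lambda_0)\to (\mathrm{rng}\,T(\lambda_0))^{\perp}$ is injective; this uses index zero, and the index-nonzero case is handled below. Invertibility is an open condition and $T$ is continuous, so $T(\lambda)+F$ is invertible on a disc $B_\varepsilon(\lambda_0)$, and $\lambda\mapsto(T(\lambda)+F)^{-1}$ is analytic there. Write
\[
T(\lambda)=(T(\lambda)+F)\bigl(I-(T(\lambda)+F)^{-1}F\bigr)=:(T(\lambda)+F)\bigl(I-K(\lambda)\bigr),
\]
where $K(\lambda)$ is analytic and has range in the fixed finite dimensional space $\mathrm{rng}\,F$. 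Then $I-K(\lambda)$ is invertible if and only if its compression to a finite dimensional space containing $\mathrm{rng}\,F$ is invertible. That compression has the form
\[
\begin{pmatrix} I-K_{11} & 0\\ -K_{21} & I\end{pmatrix}
\]
relative to a suitable decomposition, so invertibility is equivalent to $d(\lambda)=\det(I-K_{11}(\lambda))\neq 0$. Here $d$ is a scalar analytic function on the disc, so either $d\equiv 0$ on the disc, or its zeros are isolated and $T(\lambda)^{-1}$ is given by an explicit formula via Cramer's rule, hence is meromorphic with poles at those zeros.

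\emph{Index and globalization.} The index is locally constant, because $T(\lambda)=T(\lambda_0)+(\text{small})$ and the index is stable under small perturbations. Since $\Omega$ is connected, the index is constant. If it is nonzero, no $T(\lambda)$ is invertible by Theorem \ref{Theorem 2.5}, and the first alternative holds. If it is zero, the local construction above applies at every point. Let $U$ be the set of points $\lambda_0$ near which $T$ is non-invertible on a whole neighbourhood (i.e. $d\equiv0$ locally). $U$ is open by definition. It is also closed in $\Omega$: if $\lambda_n\to\lambda_0$ with $\lambda_n\in U$, then in the local chart at $\lambda_0$, $d$ vanishes on open sets accumulating at $\lambda_0$, hence $d\equiv 0$ there by the identity theorem. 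By connectedness, $U=\emptyset$ or $U=\Omega$. This gives the dichotomy: either nowhere invertible, or invertible off a set $D$ that is discrete in each chart, hence discrete in $\Omega$. On $\Omega\setminus D$ the inverse is analytic because inversion is analytic on the invertible group, it is meromorphic at points of $D$ by the local formula, and it is Fredholm since inverses of invertible operators are Fredholm.

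\emph{Main obstacle.} The delicate step is the local finite dimensional reduction: showing that invertibility of $I-K(\lambda)$ is exactly equivalent to the nonvanishing of a scalar analytic determinant, and that the inverse has only finite-order poles. I would handle this by splitting $\mathfrak{X}=M\oplus M^{\perp}$ with $M=\mathrm{rng}\,F$, which is finite dimensional. On this decomposition $I-K$ is block lower triangular with identity in the $M^{\perp}$ corner. Its inverse is then explicit in terms of $(I-K_{11})^{-1}=\operatorname{adj}(I-K_{11})/d$, which is meromorphic.
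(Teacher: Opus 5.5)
The paper does not prove this statement; it quotes it from the two cited sources. Your argument is the standard proof of the analytic Fredholm theorem: a finite-rank perturbation at $\lambda_0$, reduction to a scalar analytic determinant, local constancy of the index, and an open--closed argument on the connected set $\Omega$. The globalization part is correct as written: the index dichotomy, the set $U$, the discreteness of $D$, and meromorphy via $\operatorname{adj}/\det$.

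One step of the local reduction is wrong as stated, though it is easily repaired.

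\textbf{The error.} The operator $K(\lambda)=(T(\lambda)+F)^{-1}F$ does not take values in the fixed space $\mathrm{rng}\,F$. Its range is $(T(\lambda)+F)^{-1}(\mathrm{rng}\,F)$, which moves with $\lambda$. What is fixed is its kernel, which contains $\ker F$, a closed subspace of finite codimension. Hence the splitting $\mathfrak{X}=M\oplus M^{\perp}$ with $M=\mathrm{rng}\,F$ in your last paragraph does not make $I-K(\lambda)$ triangular. For your choice $F=JP$ one has $\mathrm{rng}\,F=(\mathrm{rng}\,T(\lambda_0))^{\perp}$ but $(\ker F)^{\perp}=\ker T(\lambda_0)$, and these differ in general. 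For example, take the block $\begin{pmatrix}\lambda&1\\0&\lambda\end{pmatrix}$ at $\lambda_0=0$ with standard basis $e_1,e_2$. Then $F=JP$ sends $e_1\mapsto e_2$ and $K(\lambda)=\frac{1}{\lambda^2-1}\begin{pmatrix}-1&0\\ \lambda&0\end{pmatrix}$, whose range is not contained in $\mathrm{rng}\,F=\operatorname{span}\{e_2\}$.

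\textbf{The fix.} Either of the following works.
\begin{enumerate}
\item Split along $M=(\ker F)^{\perp}$ and $M^{\perp}=\ker F$. Then $K(\lambda)=\begin{pmatrix}K_{11}(\lambda)&0\\ K_{21}(\lambda)&0\end{pmatrix}$, and your lower triangular form of $I-K(\lambda)$ is exactly right.
\item Factor on the other side, $T(\lambda)=\bigl(I-F(T(\lambda)+F)^{-1}\bigr)(T(\lambda)+F)$. Here $F(T(\lambda)+F)^{-1}$ really does have range in $\mathrm{rng}\,F$, and the first factor is upper triangular with respect to $\mathrm{rng}\,F\oplus(\mathrm{rng}\,F)^{\perp}$.
\end{enumerate}
In either version, invertibility of $T(\lambda)$ is equivalent to the non-vanishing of the determinant of a finite matrix analytic in $\lambda$, and the rest of your proof goes through unchanged. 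If $\mathfrak{X}$ is only a Banach space, replace orthogonal complements by a bounded projection onto $\ker T(\lambda_0)$ and a finite-dimensional complement of $\mathrm{rng}\,T(\lambda_0)$.
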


\textbf{IV.~Vector valued de Branges spaces associated with de Branges operators:}

We now define the class of vector valued de Branges spaces, denoted by $\mathcal{B}(\mathfrak{E})$. Here we consider a domain $\Omega$ which is symmetric with respect to the unit circle and contains the open unit disc. A de Branges operator with respect to the unit disc is defined as a pair of operator valued analytic functions $\mathfrak{E}(\lambda)=(E_-(\lambda), E_+(\lambda))$ whose components satisfy the following conditions:
\begin{itemize}
\item[(1)] $E_+$, $E_- : \Omega \rightarrow B(H)$ both are Fredholm operators for all $\lambda \in \Omega$.
\item[(2)] $E_+$ and $E_- $ both are invertible at least at one point of $\Omega$.
\item[(3)] $E_+^{-1}E_-$ satisfy the following conditions:
\begin{eqnarray*}
(E_+^{-1}E_-)^*(\lambda)(E_+^{-1}E_-)(\lambda) &\preceq& I 
\quad \text{for all } \lambda \in \mathbb{D}, \\
(E_+^{-1}E_-)^*(\lambda)(E_+^{-1}E_-)(\lambda) &=& I 
\quad \text{for all } \lambda \in \Omega\cap \mathbb{T},\\
(E_+^{-1}E_-)(\lambda)(E_+^{-1}E_-)^*(\lambda) &=& I 
\quad \text{for all } \lambda \in \Omega\cap \mathbb{T}.
\end{eqnarray*}
\end{itemize}
We recall that the definition and construction of de Branges operator with respect to the upper half plane $\mathbb{C}_+$, where the domain $\Omega =\mathbb{C}$ were studied in \cite{mahapatra, mahapatra2}.
Corresponding to a de Branges operator with respect to the unit disc, we define the kernel function by \begin{equation} \label{Equation 2.4}
K_\xi^{\mathfrak{E}}(z):= \left\{
    \begin{array}{ll}
         \frac{E_+(z)E_+(\xi)^*-E_-(z)E_-(\xi)^*}{\rho_\xi(z)}  & \mbox{if } z,~\xi \in \Omega~\mbox{and}~z\overline{\xi} \neq 1 \vspace{0.2cm}\\
         \frac{E_+^{'} (\frac{1}{\overline{\xi}})E_+(\xi)^*- E_-^{'}(\frac{1}{\overline{\xi}})E_-(\xi)^*}{- \overline{\xi}} & \mbox{if }\xi \in \Omega~\mbox{and}~ z \overline{\xi}=1.
    \end{array} \right.
\end{equation} 
The kernel is positive on  $\Omega \times \Omega$, and $\mathcal{B}(\mathfrak{E})$ denotes the corresponding unique RKHS of $H$-valued analytic functions on $\Omega$. The vector valued de Branges spaces $\mathcal{B}(\mathfrak{E})$ were studied in \cite{mahapatra, mahapatra2} as Hilbert spaces of entire functions.

For clarity, we want to remark the following regarding the above definition of a de Branges operator. At first glance, condition~(3) is unproblematic at those points where $E_+$ is invertible, since the expression $E_+^{-1}E_-$ is then defined in the usual sense. The only possible ambiguity arises at points where $E_+$ fails to be invertible. In what follows, we show that $E_+^{-1}E_-$ nevertheless admits a well-defined interpretation at such points in $\mathbb{D}\cup(\Omega\cap\mathbb{T})$, so that condition~(3) is meaningful on the entire set under consideration. In particular, the stated inequalities and equalities in the respective domains continue to hold there as well.

By conditions~(1) and~(2), the analytic operator valued function $E_+$ is Fredholm for every $\lambda \in\Omega$ and invertible at least at one point of $\Omega$. Therefore, by the Fredholm analytic theorem, $E_+$ is invertible at every point of $\Omega$ except possibly on a discrete subset. Moreover, at each point of this discrete set where $E_+$ is not invertible, the inverse $E_+^{-1}$ has a pole. Since $E_-$ is analytic on $\Omega$, it follows that the operator valued function $E_+^{-1}E_-$ is analytic on $\Omega$ except possibly at a discrete set, where it may have either poles or removable singularities.

We now show that, at those points of the discrete set in $\mathbb{D} \cup (\Omega \cap \mathbb{T})$, the function $E_+^{-1}E_-$ has only removable singularities. Let $\lambda_0\in\mathbb{D}$ be an isolated singularity of $E_+^{-1}E_-$. Then $E_+^{-1}E_-$ is analytic in some deleted neighbourhood of $\lambda_0$. On this deleted neighbourhood, $E_+$ is invertible, and hence the contractivity condition~(3) is valid on this deleted neighbourhood, i.e.,
\[
\bigl\|(E_+^{-1}E_-)(\lambda)\bigr\|\leq 1
\qquad
\text{for all } \lambda \text{ in a deleted neighbourhood of } \lambda_0.
\]
Therefore, by Riemann's theorem on removable singularities for operator valued functions \cite[Theorem~1.10.3]{OTAA 192}, $\lambda_0$ is a removable singularity of $E_+^{-1}E_-$. Redefining the function at $\lambda_0$ by continuity, we obtain
\[
\bigl\|(E_+^{-1}E_-)(\lambda_0)\bigr\|
=
\lim_{\lambda\to\lambda_0}\bigl\|(E_+^{-1}E_-)(\lambda)\bigr\|
\leq 1.
\]
Hence, $E_+^{-1}E_-$ extends analytically in a neighbourhood of $\lambda_0$ and remains contractive there. Since $\lambda_0$ was arbitrary, it follows that $E_+^{-1}E_-$ extends analytically in a neighbourhood of every such singularity point in $\mathbb{D}$, and the contractive inequality continues to hold throughout the unit disc.

It remains to consider the singularity points of $E_+^{-1}E_-$ lying on $\Omega\cap\mathbb{T}$. Let $\lambda_0\in \Omega\cap\mathbb{T}$ be such an isolated singularity. Since $E_+^{-1}E_-$ is meromorphic on $\Omega$, the point $\lambda_0$ is either a pole or a removable singularity.

We claim that $\lambda_0$ cannot be a pole. Indeed, let $\lambda\to \lambda_0$ radially with $\lambda\in\mathbb{D}$. Then, by the contractivity condition in ~(3),
\[
\bigl\|(E_+^{-1}E_-)(\lambda)\bigr\|\leq 1.
\]
Thus, $E_+^{-1}E_-$ remains bounded as $\lambda\to\lambda_0$ radially through points of the unit disc. Therefore, by \cite[Theorem~1.10.4]{OTAA 192}, the point $\lambda_0$ cannot be a pole. Consequently, $\lambda_0$ is a removable singularity of $E_+^{-1}E_-$.

Hence, $E_+^{-1}E_-$ extends analytically across a neighbourhood of $\lambda_0$. We continue to denote this extension by $E_+^{-1}E_-$. We now show that the equalities in condition~(3) remain valid at $\lambda_0$. Since the set of singularity points of $E_+^{-1}E_-$ is discrete in $\Omega$, there exists a sequence $\{\lambda_n\}\subset (\Omega\cap\mathbb{T})\setminus \{\text{singularity points of }E_+^{-1}E_-\}$ such that $\lambda_n\to\lambda_0$. For each $n$, condition~(3) gives
\[
(E_+^{-1}E_-)^*(\lambda_n)(E_+^{-1}E_-)(\lambda_n)=I
\]
and
\[
(E_+^{-1}E_-)(\lambda_n)(E_+^{-1}E_-)^*(\lambda_n)=I.
\]
Since $E_+^{-1}E_-$ is continuous at $\lambda_0$, passing to the limit as $n\to\infty$ yields
\[
(E_+^{-1}E_-)^*(\lambda_0)(E_+^{-1}E_-)(\lambda_0)=I
\]
and
\[
(E_+^{-1}E_-)(\lambda_0)(E_+^{-1}E_-)^*(\lambda_0)=I.
\]
Thus, the extended value $(E_+^{-1}E_-)(\lambda_0)$ is unitary. In particular,
\[
\bigl\|(E_+^{-1}E_-)(\lambda_0)\bigr\|=1.
\]

Since $\lambda_0$ was arbitrary, it follows that every isolated singularity of $E_+^{-1}E_-$ on $\Omega\cap\mathbb{T}$ is removable, and after analytic extension at such points, the equalities in condition~(3) continue to hold on $\Omega\cap\mathbb{T}$. Therefore, condition~(3) is well defined on the whole of $\mathbb{D}\cup(\Omega\cap\mathbb{T})$.

For the study of complex function theory on operator valued analytic functions, see \cite{OTAA 192, hille}.

We also record the following observation concerning the positivity of the kernel function \eqref{Equation 2.4}. Since $E_-$ is Fredholm for every $\lambda \in\Omega$ and invertible at at least one point of $\Omega$, the Fredholm analytic theorem implies that $E_-$ is invertible at every point of $\Omega$ except possibly on a discrete subset. Moreover, by condition~(3), the operator valued function $E_+^{-1}E_-$ is unitary on $\Omega\cap\mathbb{T}$ at all points where it is defined.

Now consider the operator valued function
\[
\lambda \longmapsto \left\{\bigl(E_+^{-1}E_-\bigr)\!\left(\frac{1}{\overline{\lambda}}\right)^*\right\}^{-1}.
\]
This function is well defined and analytic on $\Omega$ except possibly on a discrete subset. Let $D$ denote the union of the corresponding discrete exceptional sets. Then both the operator valued functions
\[
(E_+^{-1}E_-)(\lambda)
\qquad\text{and}\qquad
 \left\{\bigl(E_+^{-1}E_-\bigr)\!\left(\frac{1}{\overline{\lambda}}\right)^*\right\}^{-1}
\]
are analytic on the domain $\Omega\setminus D$. Further, for every $\lambda\in (\Omega\setminus D) \cap\mathbb{T}$, condition~(3) implies that
\[
(E_+^{-1}E_-)(\lambda)
=
\left\{\bigl(E_+^{-1}E_-\bigr)\!\left(\frac{1}{\overline{\lambda}}\right)^*\right\}^{-1}.
\]
Since $(\Omega\setminus D) \cap\mathbb{T}$ has an accumulation point, the identity theorem for operator valued functions \cite[Theorem 3.11.5]{hille} yields
\[
(E_+^{-1}E_-)(\lambda)
=
\left\{\bigl(E_+^{-1}E_-\bigr)\!\left(\frac{1}{\overline{\lambda}}\right)^*\right\}^{-1}
\]
on the domain $\Omega\setminus D$.

Using this identity, one obtains the positivity of the kernel function \eqref{Equation 2.4} on the dense subset
\[
(\Omega\setminus D)\times (\Omega\setminus D)\subset \Omega\times\Omega.
\]
Since the kernel function is continuous on $\Omega\times\Omega$, it follows that the kernel is positive on the whole of $\Omega\times\Omega$. For a detailed proof of the positivity of the corresponding kernel in the upper half plane setup, we refer the reader to \cite[Sections~4--5]{mahapatra}.
\\
\textbf{V. Hardy space characterization of $\mathcal B(\mathfrak E)$ in the disc setting:}

Let \(\mathfrak{X}\) be a Hilbert space. We denote by
\(H^2_{\mathfrak{X}}(\mathbb D)\) the \(\mathfrak{X}\)-valued Hardy space on
the unit disc, that is,
\[
H^2_{\mathfrak{X}}(\mathbb D)
=
\left\{
f(z)=\sum_{n=0}^{\infty}y_nz^n:
y_n\in\mathfrak{X},\ 
\sum_{n=0}^{\infty}\|y_n\|_{\mathfrak{X}}^2<\infty
\right\}.
\]
The norm is given by
\[
\|f\|^2_{H^2_{\mathfrak{X}}(\mathbb D)}
=
\sum_{n=0}^{\infty}\|y_n\|_{\mathfrak{X}}^2.
\]
Equivalently,
\[
\|f\|^2_{H^2_{\mathfrak{X}}(\mathbb D)}
=
\sup_{0<r<1}
\int_{\mathbb T}\|f(r\zeta)\|_{\mathfrak{X}}^2\,dm(\zeta),
\]
where \(m\) denotes the normalized Lebesgue measure on \(\mathbb T\).

We also define the exterior Hardy space \(H^2_{\mathfrak{X}}(\mathbb D_e)\) by
\[
H^2_{\mathfrak{X}}(\mathbb D_e)
=
\left\{
g(z)=\sum_{n=1}^{\infty}y_{-n}z^{-n}:
y_{-n}\in\mathfrak{X},\ 
\sum_{n=1}^{\infty}\|y_{-n}\|_{\mathfrak{X}}^2<\infty
\right\},
\]
where
\[
\mathbb D_e=\{z\in\mathbb C: |z|>1\}\cup\{\infty\}.
\]
Thus the functions in \(H^2_{\mathfrak{X}}(\mathbb D_e)\) are analytic on
\(\{z:|z|>1\}\), vanish at infinity, and have square-summable negative
Fourier coefficients. The norm is
\[
\|g\|^2_{H^2_{\mathfrak{X}}(\mathbb D_e)}
=
\sum_{n=1}^{\infty}\|y_{-n}\|_{\mathfrak{X}}^2.
\]
On boundary values, \(H^2_{\mathfrak{X}}(\mathbb D_e)\) is identified with 
\[ H^2_{\mathfrak X,-} := L^2_{\mathfrak X}(\mathbb T)\ominus H^2_{\mathfrak X}(\mathbb D), \] where \[ H^2_{\mathfrak X,-} = \left\{ \sum_{n<0}y_n\zeta^n: \sum_{n<0}\|y_n\|_{\mathfrak X}^2<\infty \right\}. \]
Let \(\Theta:\mathbb D\to B(\mathfrak{X})\) be a Schur class operator valued
function, that is,
\[
\Theta(z)^*\Theta(z)\leq I,
\qquad z\in\mathbb D.
\]
The de Branges--Rovnyak space associated with \(\Theta\), denoted by
\(\mathcal H(\Theta)\), is the reproducing kernel Hilbert space with kernel
\[
K^\Theta_\xi(z)
=
\frac{I-\Theta(z)\Theta(\xi)^*}{1-z\overline{\xi}},
\qquad z,\xi\in\mathbb D.
\]

The following result gives the disk analogue of the classical Hardy space description of de Branges spaces. In the scalar upper half-plane setting, this description goes back to de Branges; see \cite{Brange}. Also, see \cite{DymAdv}. For de Branges spaces of $\mathbb{C}^n$ valued entire functions corresponding to matrix valued reproducing kernels in the upper half-plane, see Arov and Dym \cite[Theorem~3.10]{multi}. In the present paper, the functions are defined on a domain symmetric with respect to the unit circle, and therefore the natural formulation is in terms of the unit disk and the exterior disk. We include the proof for the sake of keeping the paper self-contained. The proof follows the same kernel-transport idea as in the above references.

\begin{thm}\label{Theorem HSC}
Let \(\mathfrak{E}=(E_-,E_+)\) be a de Branges operator on a domain \(\Omega\) symmetric
with respect to the unit circle and containing \(\mathbb D\). Put
\[
\Theta(z)=E_+(z)^{-1}E_-(z),
\qquad z\in\mathbb D,
\]
where the removable singularities of \(E_+^{-1}E_-\) are understood in the
sense explained above. Then the following assertions hold.

\begin{enumerate}
\item 
\[
f\in \mathcal B(\mathfrak{E})
\quad\Longleftrightarrow\quad
E_+^{-1}f\in\mathcal H(\Theta)
\]
and
\[
\|f\|_{\mathcal B(\mathfrak{E})}
=
\|E_+^{-1}f\|_{\mathcal H(\Theta)}.
\]
Moreover, for \(f,h\in \mathcal B(\mathfrak{E})\),
\[
\langle f,h\rangle_{\mathcal B(\mathfrak{E})}
=
\left\langle
E_+^{-1}f,
E_+^{-1}h
\right\rangle_{\mathcal H(\Theta)}.
\]

\item If, in addition, $\Omega = \mathbb{C}$ and \(\Theta\) is such that
\[
\Theta(\zeta)^*\Theta(\zeta)
=
\Theta(\zeta)\Theta(\zeta)^*
=
I
\quad \text{for all } \zeta\in\mathbb T,
\]
then
\[
        f\in \mathcal B(\mathfrak E)
        \quad\Longleftrightarrow\quad
        E_+^{-1}f\big|_{\mathbb D}\in H_\mathfrak X^2(\mathbb D)
        \quad\text{and}\quad
        E_-^{-1}f\big|_{\mathbb D_e}\in H_\mathfrak X^2(\mathbb D_e).
\]
In this case,
\[
\|f\|_{\mathcal B(\mathfrak{E})}
=
\|E_+^{-1}f\|_{H^2_{\mathfrak{X}}(\mathbb D)},
\]
and
\[
\langle f,h\rangle_{\mathcal B(\mathfrak{E})}
=
\int_{\mathbb T}
\left\langle
E_+(\zeta)^{-1}f(\zeta),
E_+(\zeta)^{-1}h(\zeta)
\right\rangle_{\mathfrak{X}}
\,dm(\zeta).
\]
\end{enumerate}
\end{thm}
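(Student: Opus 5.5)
Part (1) follows from kernel transport under multiplication by an operator valued function. On $\mathbb D$ minus the discrete set where $E_+$ fails to be invertible, factor the kernel \eqref{Equation 2.4} as
\[
K^{\mathfrak E}_\xi(z)=E_+(z)\,\frac{I-\Theta(z)\Theta(\xi)^*}{1-z\overline{\xi}}\,E_+(\xi)^*=E_+(z)K^\Theta_\xi(z)E_+(\xi)^*.
\]
This is the standard pullback situation, so I would invoke the general principle (Paulsen, Theorem on multiplication of kernels by $F(z)\,\cdot\,F(\xi)^*$). If $F$ is an operator valued function with $F(z)$ injective for $z$ off a discrete set, then $\mathcal H(FKF^*)=F\mathcal H(K)$, and $\|Fg\|=\|g\|$ isometrically.

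Concretely, define $U:\mathcal H(\Theta)\to\mathcal B(\mathfrak E)$ by $(Ug)(z)=E_+(z)g(z)$. On kernel functions, $U(K^\Theta_\xi E_+(\xi)^*u)=K^{\mathfrak E}_\xi u$. Computing inner products of finite sums of such functions shows that $U$ is isometric on their span. The set $\{E_+(\xi)^*u\}$ is dense in $H$ for $\xi$ with $E_+(\xi)$ invertible, so this span is dense in $\mathcal H(\Theta)$ (a function orthogonal to it vanishes off a discrete set, hence identically). The same argument shows the span of the $K^{\mathfrak E}_\xi u$ is dense in $\mathcal B(\mathfrak E)$. Hence $U$ extends to a unitary. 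Pointwise evaluation is continuous in both RKHS's, so $U$ is multiplication by $E_+$ on all of $\mathcal H(\Theta)$. This gives $f\in\mathcal B(\mathfrak E)$ iff $E_+^{-1}f\in\mathcal H(\Theta)$, together with the norm and inner product identities. Here $f$ is restricted to $\mathbb D$, which determines it on $\Omega$ by analyticity; at points where $E_+$ is singular, $E_+^{-1}f$ extends analytically as an element of $\mathcal H(\Theta)$.

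For (2), when $\Theta$ is inner on $\mathbb T$ (unitary a.e.), $\mathcal H(\Theta)=H^2\ominus\Theta H^2$ isometrically, with the $H^2$ norm. So $\|f\|=\|E_+^{-1}f\|_{H^2}$, and the inner product becomes the boundary integral. It remains to characterize the condition $g:=E_+^{-1}f\perp\Theta H^2$. By boundary values, $g\perp\Theta H^2$ iff $\Theta^*g\in H^2_-$. On $\mathbb T$ we have $\Theta(\zeta)^*=\Theta(\zeta)^{-1}$, so $\Theta^*g=E_-^{-1}E_+E_+^{-1}f=E_-^{-1}f$.

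To make the boundary statement an analytic one on $\mathbb D_e$, I would use the reflection identity $\Theta(\lambda)=\{\Theta(1/\overline\lambda)^*\}^{-1}$ established above. With $\Omega=\mathbb C$, it shows that $E_-^{-1}f$ on $\mathbb D_e$ is analytic off a discrete set. Its boundary values equal $\Theta^*g$. Membership of its boundary function in $H^2_-$ is then equivalent to $E_-^{-1}f|_{\mathbb D_e}\in H^2(\mathbb D_e)$.

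The main obstacle is this last identification. One must show that the boundary function of the analytic function $E_-^{-1}f$ on $|z|>1$ coincides with $\Theta^*g$ a.e. One must also show that $H^2(\mathbb D_e)$ membership is equivalent, including at possible poles of $E_-^{-1}$ in $\mathbb D_e$ and at infinity. Because $\Omega=\mathbb C$, everything is analytic across $\mathbb T$ off a discrete set, so nontangential limits are honest values. I would handle poles by noting that an $H^2_-$ boundary function forces the analytic extension to be removable, and conversely.
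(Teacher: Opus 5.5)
Your proposal is correct and follows the same route as the paper. For (1), the factorization $K^{\mathfrak E}_\xi(z)=E_+(z)K^\Theta_\xi(z)E_+(\xi)^*$ yields the unitary $g\mapsto E_+g$ from $\mathcal H(\Theta)$ onto $\mathcal B(\mathfrak E)$; for (2), the identification $\mathcal H(\Theta)=H^2_{\mathfrak X}(\mathbb D)\ominus\Theta H^2_{\mathfrak X}(\mathbb D)$, the criterion $g\perp\Theta H^2_{\mathfrak X}(\mathbb D)\iff\Theta^*g\in H^2_{\mathfrak X,-}$ and $\Theta^*=\Theta^{-1}$ on $\mathbb T$ give $\Theta^*g=E_-^{-1}f$, and your density argument for the kernel spans and your handling of boundary values and poles of $E_-^{-1}f$ supply details the paper only asserts.
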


\begin{proof}
Here,
\[
\Theta(z)=E_+(z)^{-1}E_-(z),
\qquad z\in\mathbb D.
\]
By the definition of a de Branges operator, after removing the possible removable
singularities, \(\Theta\) is a Schur class operator valued function on \(\mathbb D\).
Hence
\[
K^\Theta_\xi(z)
=
\frac{I-\Theta(z)\Theta(\xi)^*}{1-z\overline{\xi}}
\]
is a positive kernel on \(\mathbb D\), and it defines the de Branges--Rovnyak space
\(\mathcal H(\Theta)\).

Since
\[
E_-(z)=E_+(z)\Theta(z),
\]
we obtain, for \(z,\xi\in\mathbb D\),
\[
\begin{aligned}
K_\xi^\mathfrak{E}(z)
&=
\frac{
E_+(z)E_+(\xi)^*
-
E_-(z)E_-(\xi)^*
}
{1-z\overline{\xi}}                                      \\[4pt]
&=
\frac{
E_+(z)E_+(\xi)^*
-
E_+(z)\Theta(z)\Theta(\xi)^*E_+(\xi)^*
}
{1-z\overline{\xi}}                                      \\[4pt]
&=
E_+(z)
\left(
\frac{I-\Theta(z)\Theta(\xi)^*}{1-z\overline{\xi}}
\right)
E_+(\xi)^*                                               \\[4pt]
&=
E_+(z)K^\Theta_\xi(z)E_+(\xi)^*.
\end{aligned}
\]
Thus the reproducing kernel of \(\mathcal B(\mathfrak E)\), is obtained
from the de Branges--Rovnyak kernel by multiplication by \(E_+(z)\) on the left and
by \(E_+(\xi)^*\) on the right. This gives a unitary
identification
\[
M_{E_+}:\mathcal H(\Theta)\longrightarrow \mathcal B(\mathfrak E),
\qquad
M_{E_+}g=E_+g.
\]
Therefore,
\[
f\in \mathcal B(\mathfrak E)
\quad\Longleftrightarrow\quad
E_+^{-1}f\in\mathcal H(\Theta),
\]
where \(E_+^{-1}f\) is understood on the set where \(E_+\) is invertible and then
by removable analytic continuation. The norm and inner product are
therefore
\[
\|f\|_{\mathcal B(\mathfrak E)}
=
\|E_+^{-1}f\|_{\mathcal H(\Theta)}
\]
and
\[
\langle f,h\rangle_{\mathcal B(\mathfrak E)}
=
\left\langle
E_+^{-1}f,
E_+^{-1}h
\right\rangle_{\mathcal H(\Theta)}.
\]
This proves the first assertion. Now, to prove the second assertion, let
\[
g=E_+^{-1}f.
\]
Then
\[
g\in H^2_{\mathfrak{X}}(\mathbb D)
\ominus
\Theta H^2_{\mathfrak{X}}(\mathbb D)
\]
if and only if
\[
g\perp \Theta q
\quad \text{for every } q\in H^2_{\mathfrak{X}}(\mathbb D).
\]
In terms of boundary values, this means
\[
\int_{\mathbb T}
\left\langle
g(\zeta),\Theta(\zeta)q(\zeta)
\right\rangle_{\mathfrak{X}}
\,dm(\zeta)
=
0
\]
for every \(q\in H^2_{\mathfrak{X}}(\mathbb D)\). Equivalently,
\[
\int_{\mathbb T}
\left\langle
\Theta(\zeta)^*g(\zeta),q(\zeta)
\right\rangle_{\mathfrak{X}}
\,dm(\zeta)
=
0
\]
for every \(q\in H^2_{\mathfrak{X}}(\mathbb D)\). Hence
\[
\Theta^*g\in H^2_{\mathfrak{X},-}.
\]
Using the boundary identification of $
H^2_{\mathfrak{X},-}$ with $ H^2_{\mathfrak{X}}(\mathbb D_e)$,
we get
\[
\Theta^*g\in H^2_{\mathfrak{X}}(\mathbb D_e).
\]
Since,
\[
\Theta(\zeta)^{-1}=\Theta(\zeta)^*
\quad \text{for all } \zeta\in\mathbb T.
\]
Also,
\[
E_-=E_+\Theta.
\]
Therefore, on boundary values,
\[
E_-^{-1}f
=
\Theta^{-1}E_+^{-1}f
=
\Theta^*g.
\]
Thus
\[
E_-^{-1}f\in H^2_{\mathfrak{X}}(\mathbb D_e).
\]
We have shown that, in this case, $f\in \mathcal B(\mathfrak E)$
if and only if
\[
        E_+^{-1}f\big|_{\mathbb D}\in H_\mathfrak X^2(\mathbb D) \quad \text{and} \quad E_-^{-1}f\big|_{\mathbb D_e}\in H_\mathfrak X^2(\mathbb D_e).
\]
This proves the Hardy-space characterization.

Finally, the norm is the usual Hardy norm of \(E_+^{-1}f\):
\[
\|f\|_{\mathcal B(\mathfrak E)}
=
\|E_+^{-1}f\|_{H^2_{\mathfrak{X}}(\mathbb D)}.
\]
Therefore, for \(f,h\in \mathcal B(\mathfrak E)\),
\[
\langle f,h\rangle_{\mathcal B(\mathfrak E)}
=
\int_{\mathbb T}
\left\langle
E_+(\zeta)^{-1}f(\zeta),
E_+(\zeta)^{-1}h(\zeta)
\right\rangle_{\mathfrak{X}}
\,dm(\zeta).
\]
This completes the proof.
\end{proof}

\section{Direct sum decomposition of $H \oplus H$}
\label{Section 3}
Let $T \in B(H)$ be a cnu contraction. Throughout the paper, we also take the additional hypothesis that a point $a \in \mathbb{T}$ belongs to the resolvent set $\rho(T)$ of $T$. 
Consider the operator $V_0 : H \oplus \{0\} \to H \oplus H$ by
\begin{equation}\label{V0}
V_0=
\begin{bmatrix}
T & 0 \\
D_T & 0
\end{bmatrix}.
\end{equation}

Then $V_0$ is an isometry. Indeed, for $h \in H$,
\[
V_0(h,0) = (Th, D_T h),
\]
and therefore
\[
\|V_0(h,0)\|^2
= \|Th\|^2 + \|D_T h\|^2
= \|Th\|^2 + \langle (I - T^{*}T)h, h\rangle
= \langle h,h\rangle
= \|h\|^2.
\]
Thus $V_0^{*}V_0 = I$ on $H \oplus \{0\}$, and hence $V_0$ is an isometry. 
Moreover, since $T$ is cnu, it follows that $V_0$ is also cnu. We also consider the operator $V : H \oplus H \to H \oplus H$ by
\begin{equation}\label{V}
V
=
\begin{bmatrix}
T & 0 \\
D_T & 0
\end{bmatrix}.
\end{equation}

Then $V$ is a contraction and, moreover, a partial isometry and a dilation of $T$. Indeed,
\[
V(h_1,h_2) = (Th_1, D_T h_1),
\]
and therefore the initial space of $V$ is
\[
(\ker V)^{\perp} = H \oplus \{0\}.
\]
Thus $V$ acts as an isometry on $(\ker V)^{\perp}$ and vanishes on $\{0\} \oplus H$. 
Since the restriction of $V$ to its initial space coincides with the cnu isometry $V_0$, it follows that $V$ is also cnu.

Moreover,
\[
\ker V^*
=
\left\{
\begin{bmatrix}
D_{T^*} g\\
- T^* g
\end{bmatrix}
: g \in H
\right\}.
\] 

\begin{rmk} The operator $V$ is not invertible and therefore $0\in\sigma(V)$. 
We determine for which nonzero $\lambda$ the operator $V-\lambda I$ fails to be invertible. By \cite[Problem 71]{Halmos},
\[
V-\lambda I=
\begin{bmatrix}
T-\lambda I & 0\\
D_T & -\lambda I
\end{bmatrix}
\]
is invertible if and only if
\[
\begin{bmatrix}
T-\lambda I & 0\\
0 & -\lambda I
\end{bmatrix}
\]
is invertible. The latter holds if and only if $T-\lambda I$ is invertible (since $\lambda\neq0$). Consequently,
\[
\lambda\notin\sigma(V)\quad\Longleftrightarrow\quad \lambda\notin\sigma(T),\ \lambda\neq0.
\]
Hence
\[
\sigma(V)=\sigma(T)\cup\{0\}.
\]
\end{rmk}

\begin{rmk}\label{Remark 3.2}
We have assumed that $T$ is a cnu contraction with a point $a \in \rho(T)$. By the above remark, it follows that $a \in \rho({V})$. Hence there exists a constant $c>0$ such that $$\Vert(V-aI)(f+g) \Vert \geq c \Vert(f+g) \Vert$$ for all $f+g \in \mathcal{H}=H \oplus H$. In particular, for every $f \in H \oplus \{0 \}$, $$c \Vert f\Vert \leq \Vert(V-aI)f \Vert=\Vert(V_0-aI)f \Vert. $$ Thus $a$ is a point of regular type of $V_0$. Note that this implies that                                                                                                                                                                                                                                                                                                                                                                                                                               $(V_0-aI)(\ker V)^{\perp}$ is closed. In fact, the following result holds: if $z \in \mathbb{C}$ is not an eigenvalue of $T$, then the closedness of $(V_0-aI)(\ker V)^{\perp}$ is both necessary and sufficient for $z$ to be a point of regular type of $T$. Moreover, the set of points of regular type is always open. For the proof , see \cite[Chapter 1, Section 3]{kreinlect}
\end{rmk}

\begin{rmk}\label{Remark 3.3}
Since $T$ is a cnu contraction, $T^*$ is also a cnu contraction. Hence, any point $z \in \mathbb{T}$ is not an eigenvalue of $T^*$ as well. Thus, \begin{eqnarray*}
& & \ker (T^*-zI)= \{ 0 \} \\
&\Rightarrow & \overline{\rng (T-zI)} = H.
\end{eqnarray*}
Now, if the point $z$ is also a point of regular type of $T$, then $(T-zI)$ is injective and has closed range. This implies that $z \in \rho(T)$.
\end{rmk}

In the forthcoming theorem, we establish a direct sum decomposition of the Hilbert space $\mathcal{H}= H \oplus H$, where the second component is a fixed infinite dimensional closed subspace of $\mathcal{H}$.
\begin{thm}\label{Theorem D}
Let the operators $T$, $V$, $V_0$ be as defined above, and assume $a \in \mathbb{T}$ belongs to the resolvent set $\rho(T)$. Then for every $z \in \mathbb{C}\setminus \mathbb{T}$, the following direct sum decomposition holds:
\begin{eqnarray}
\mathcal{H}&=& (V_0-zI)(\ker V)^{\perp} \dotplus ((V_0-aI)(\ker V)^{\perp})^{\perp} \label{D}\\ \nonumber
&=& (V-zI)(\ker V)^{\perp} \dotplus ((V-aI)(\ker V)^{\perp})^{\perp}
\end{eqnarray}
\end{thm}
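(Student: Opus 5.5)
The plan is to reduce the decomposition to the invertibility of one explicit operator on $H$. Put $K=(\ker V)^{\perp}=H\oplus\{0\}$, $L_z=(V_0-zI)K$ and $M=(V_0-aI)K$. By Remark \ref{Remark 3.2}, $M$ is closed.

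A closed subspace $L$ satisfies $\mathcal H=L\dotplus M^{\perp}$ exactly when $P_M|_{L}:L\to M$ is a bijection, where $P_M$ is the orthogonal projection onto $M$. Indeed, injectivity gives $L\cap M^\perp=\{0\}$, and for $x\in\mathcal H$ surjectivity gives $l\in L$ with $P_Ml=P_Mx$, so that $x-l\in M^\perp$.

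For $|z|\neq 1$ and $k\in K$ we have $\|(V_0-z)k\|\ge \bigl|1-|z|\bigr|\,\|k\|$, because $V_0$ is an isometry. Hence $L_z$ is closed and $V_0-z:K\to L_z$ is an isomorphism. Likewise $V_0-a:K\to M$ is an isomorphism. So it suffices to show that the sesquilinear form
\[
(k,h)\mapsto \langle (V_0-z)k,(V_0-a)h\rangle,\qquad k,h\in K\cong H,
\]
is given by an invertible operator on $H$.

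Expanding, and using $V_0^*V_0=I$ on $K$ together with $\langle V_0k,h\rangle=\langle Tk,h\rangle$ for $h\in K$, this form equals $\langle B_z k,h\rangle$. Here, with $C=I-\bar aT$ and $w=z\bar a$ (so $|w|\neq1$),
\[
B_z=(1+z\bar a)I-\bar aT-zT^*=C+wC^*.
\]

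The heart of the proof is showing that $B_z$ is invertible. For a unit vector $k$ put $c=\langle Ck,k\rangle=1-\bar a\langle Tk,k\rangle$. Then
\[
\langle B_zk,k\rangle=c+w\bar c,\qquad |c+w\bar c|\ge \bigl|1-|w|\bigr|\,|c|.
\]

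To see that $B_z$ is bounded below, argue by contradiction. Suppose $\|k_n\|=1$ and $B_zk_n\to0$. Then $c_n\to0$, that is, $\bar a\langle Tk_n,k_n\rangle\to1$. Consequently
\[
\|Tk_n-ak_n\|^2\le 2-2\,\mathrm{Re}\bigl(\bar a\langle Tk_n,k_n\rangle\bigr)\to0,
\]
using $\|T\|\le1$. This contradicts $a\in\rho(T)$.

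The adjoint $B_z^*=C^*+\bar wC$ has quadratic form $\bar c+\bar w c$, so the same argument shows it is bounded below as well. Therefore $B_z$ is injective with closed, dense range, hence invertible.

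This step is where the hypothesis $a\in\rho(T)$ is used. It is also the step I expect to need the most care, because one cannot argue naively through the numerical range: $a$ need not lie outside the closure of $W(T)$. The approximate-eigenvector argument above is designed to avoid that issue.

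It remains to conclude. For $k\in K$ we have $P_M(V_0-z)k=0$ if and only if $B_zk=0$. Moreover, every element of $M$ has the form $(V_0-a)h'$, and $\langle P_M(V_0-z)k,(V_0-a)h\rangle=\langle B_zk,h\rangle$ for all $h\in K$. Since $B_z$ is invertible and $V_0-a:K\to M$ is an isomorphism (so $M$ is identified with $H$ through $h\mapsto (V_0-a)h$), this shows that $P_M\circ(V_0-z)|_K$ is a bijection of $K$ onto $M$. Hence $P_M|_{L_z}$ is a bijection of $L_z$ onto $M$, which gives \eqref{D}. The second line of the statement follows immediately, since $V=V_0$ on $K=(\ker V)^\perp$.
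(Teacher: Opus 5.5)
Your proof is correct, but it takes a different route from the paper.

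\textbf{The paper's route.} The paper argues geometrically. It builds the Straus-type extension $U_a(f+\phi)=V_0f+a\phi$ on $(\ker V)^{\perp}\dotplus M_a^{\perp}$. It then checks three things:
\begin{enumerate}
\item $U_a$ is isometric;
\item its domain is closed with trivial orthogonal complement, using the description of $M_a^{\perp}$ as the graph of $-(T^*-\overline a)^{-1}D_T$;
\item its range is dense, because $(\rng V_0)^{\perp}\cap M_a=\{0\}$.
\end{enumerate}
Hence $U_a$ is unitary. The decomposition is then read off from the invertibility of $U_a-zI$ for $z\notin\mathbb T$, via $h=(V_0-z)f+(a-z)\phi$.

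\textbf{Your route.} You reduce everything to the invertibility of the Gram-type operator $B_z=A_a^*A_z$, where $A_wx=((T-w)x,D_Tx)$. This is the same kind of operator as $\Phi_{z,\alpha}$ in Lemma~\ref{Lemma 4.4}, with $(z,\alpha)$ replaced by $(a,z)$. Your last step is cleanest written as $P_MA_z=A_a(A_a^*A_a)^{-1}B_z$, which is a composition of isomorphisms $H\to M$.

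\textbf{What your route gives.} It yields the explicit formula $P_Y(z)=I-A_zB_z^{-1}A_a^*$, which makes analyticity in $z$ and norm bounds transparent. It also gives Theorem~\ref{Theorem 3.5} immediately, without the gap and minimal-angle machinery:
\begin{itemize}
\item Because $T$ is cnu, $A_z$ is always injective.
\item So the set of $z$ with $M_z$ closed and the decomposition valid is exactly $\{z: B_z \text{ invertible}\}$.
\item This set is open, since $z\mapsto B_z$ is affine, and it contains $a$, since $B_a=A_a^*A_a$ is invertible.
\end{itemize}

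\textbf{What the paper's route gives.} It produces the unitary extension $U_a$ itself. This ties the decomposition to Straus extension theory and to the generalized Cayley transforms used later.

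\textbf{A correction to your aside.} Under the hypotheses, $a$ does lie outside $\overline{W(T)}$. Your own inequality $\|Tk-ak\|^2\le 2-2\,\mathrm{Re}(\bar a\langle Tk,k\rangle)$ shows that $\overline{W(T)}\cap\mathbb T\subseteq\sigma_{ap}(T)$ for any contraction. Hence $\delta:=\operatorname{dist}(a,\overline{W(T)})>0$, and $|\langle B_zk,k\rangle|\ge \bigl|1-|z|\bigr|\,\delta\,\|k\|^2$. Invertibility of $B_z$ then follows at once by Lax--Milgram, with $\|B_z^{-1}\|\le \bigl(\bigl|1-|z|\bigr|\,\delta\bigr)^{-1}$. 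Your approximate-eigenvector contradiction argument is an unpacked version of exactly this fact.
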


\begin{proof}
Consider an extension $U_a$ of $V_0$ defined by
\[
\mathcal{D}(U_a)
= (\ker V)^{\perp} 
  \dotplus 
  \big((V_0-aI)(\ker V)^{\perp}\big)^{\perp},
\]
and
\[
U_a(f+\phi)=V_0 f + a\phi,
\]
where $f \in (\ker V)^{\perp}$ and 
$\phi \in \big((V_0-aI)(\ker V)^{\perp}\big)^{\perp}$.
The proof is carried out in several steps. In Step~1, we show that
\[
(\ker V)^{\perp} 
\cap 
\big((V_0-aI)(\ker V)^{\perp}\big)^{\perp}
= \{0\}.
\]
In Step~2, we prove that $U_a$ is an isometry on its domain. 
Steps~3 and~4 establish that $\mathcal{D}(U_a)=\mathcal{H}$ and 
$\rng(U_a)=\mathcal{H}$, respectively. 
Finally, we prove that the direct sum decomposition holds.\\
\textbf{Step 1.}
Let $h \in (\ker V)^{\perp} \cap 
\big((V_0-aI)(\ker V)^{\perp}\big)^{\perp}$ 
such that $h \neq 0$. Then
\[
0 = U_a(0) = U_a(h-h) = V_0 h - a h = (V_0-aI)h.
\]
This is a contradiction, since $a$ is a point of regular type for $V_0$.\\
\textbf{Step 2.}
Let $f,g \in (\ker V)^{\perp}$ and 
$\phi,\psi \in \big((V_0-aI)(\ker V)^{\perp}\big)^{\perp}$. 
Then
\begin{eqnarray*}
\langle U_a(f+\phi),U_a(g+\psi) \rangle
&=& \langle V_0 f + a\phi,V_0 g + a\psi\rangle \\
&=& \langle V_0 f,V_0 g\rangle 
   + |a|^2 \langle \phi,\psi\rangle
   + \bar a \langle V_0 f,\psi\rangle
   + a \langle \phi,V_0 g\rangle .
\end{eqnarray*}
Since $V_0$ is isometric on $(\ker V)^{\perp}$, we have
\[
\langle V_0 f,V_0 g\rangle = \langle f,g\rangle .
\]
Moreover, since $\psi \in \big((V_0-aI)(\ker V)^{\perp}\big)^{\perp}$, we have
\[
0=\langle (V_0-aI)f,\psi\rangle,
\]
which implies
\[
\langle V_0 f,\psi\rangle = a\langle f,\psi\rangle .
\]
Similarly,
\[
\langle \phi,V_0 g\rangle = \bar a \langle \phi,g\rangle .
\]
Therefore,
\begin{eqnarray*}
\langle U_a(f+\phi),U_a(g+\psi) \rangle
&=& \langle f,g\rangle + \langle \phi,\psi\rangle 
   + |a|^2 \langle f,\psi\rangle 
   + |a|^2 \langle \phi,g\rangle \\
&=& \langle f+\phi,\, g+\psi\rangle .
\end{eqnarray*}
\textbf{Step 3.}
Observe that
\begin{eqnarray*}
(\mathcal{D}(U_a))^{\perp}
&=& \big((\ker V)^{\perp} + ((V_0-aI)(\ker V)^{\perp})^{\perp} \big)^{\perp} \\
&\overset{\mbox{\ding{172}}}{=}& (\ker V)^{\perp\perp} 
   \cap \big((V_0-aI)(\ker V)^{\perp}\big)^{\perp\perp} \\
&\overset{\mbox{\ding{173}}}{=}& \ker V 
   \cap (V_0-aI)(\ker V)^{\perp} \\
&=& (\{0\} \oplus H) 
   \cap \left\{
   \begin{bmatrix}
   (T-a)h \\
   D_T h
   \end{bmatrix}
   : h \in H
   \right\} \\
&\overset{\mbox{\ding{174}}}{=}& \{0\}.
\end{eqnarray*}
The equality $\mbox{\ding{172}}$ follows from the fact that if $M$ and $N$ are closed subspaces of a Banach space, then
\[
(M+N)^{\perp}=M^{\perp}\cap N^{\perp}.
\]
The equality $\mbox{\ding{173}}$ holds because $\ker(V)$ is closed and 
$(V_0-aI)(\ker V)^{\perp}$ is also closed, since $a$ is a point of regular type of $V_0$.

For $\mbox{\ding{174}}$, suppose there exists a nonzero $g \in H$ such that
\[
\begin{bmatrix}
0 \\
g
\end{bmatrix}
\in (\{0\}\oplus H) 
\cap 
\left\{
\begin{bmatrix}
(T-a)h \\
D_T h
\end{bmatrix}
: h \in H
\right\}.
\]
Then $(T-aI)h=0$, which implies $h=0$ since $a \in \rho(T)$. Consequently, $g=D_T h=0$, a contradiction.
 
It remains to justify that $\mathcal D(U_a)$ is closed. Put $M_a=(V_0-aI)(\ker V)^\perp $.
As computed later in equation~\eqref{eqn 5.6}, we have
\[
        M_a^\perp
        =
        \left\{
        \binom{-(T^*-\overline a I)^{-1}D_Ty}{y}:y\in H
        \right\}.
\]
Thus $M_a^\perp$ is the graph of the bounded operator
\[
        -(T^*-\overline a I)^{-1}D_T:H\to H.
\]
Let
\[
        \binom{x_n}{0}+
        \binom{-(T^*-\overline a I)^{-1}D_Ty_n}{y_n}
        \longrightarrow
        \binom{x}{y}
        \quad \text{in } H\oplus H.
\]
From the second component, $y_n\to y$. Since
$-(T^*-\overline a I)^{-1}D_T$ is bounded, we get
\[
        -(T^*-\overline a I)^{-1}D_Ty_n
        \longrightarrow
        -(T^*-\overline a I)^{-1}D_Ty.
\]
Therefore
\[
        x_n\to x+(T^*-\overline a I)^{-1}D_Ty.
\]
Hence
\[
\binom{x}{y}
=
\binom{x+(T^*-\overline a I)^{-1}D_Ty}{0}
+
\binom{-(T^*-\overline a I)^{-1}D_Ty}{y}
\in \mathcal D(U_a).
\]
Thus $\mathcal D(U_a)$ is closed.\\
\textbf{Step 4.}
Since the range of an isometric operator is closed, it suffices to show that the range of $U_a$ is dense in $\mathcal{H}$. Observe that
\[
(V_0(\ker V)^{\perp})^{\perp}
= (\rng V_0)^{\perp}
= (\rng V)^{\perp}
= \ker V^*
= \left\{
\begin{bmatrix}
D_{T^*} g \\
- T^* g
\end{bmatrix}
: g \in H
\right\},
\]
and
\[
((V_0-aI)(\ker V)^{\perp})^{\perp\perp}
= (V_0-aI)(\ker V)^{\perp}
=
\left\{
\begin{bmatrix}
(T-aI)h \\
D_T h
\end{bmatrix}
: h \in H
\right\}.
\]
Suppose, if possible, that
\[
\left\{
\begin{bmatrix}
D_{T^*} g \\
- T^* g
\end{bmatrix}
: g \in H
\right\}
\cap
\left\{
\begin{bmatrix}
(T-aI)h \\
D_T h
\end{bmatrix}
: h \in H
\right\}
\neq \{0\}.
\]
Then there exist $g,h \in H$ such that
\[
D_{T^*} g = (T-aI)h, \qquad -T^* g = D_T h.
\]
Hence
\begin{eqnarray*}
&& T^* D_{T^*} g = T^*(T-aI)h \\
&\Rightarrow& D_T T^* g = T^* T h - a T^* h \\
&\Rightarrow& - D_T^2 h = T^* T h - a T^* h \\
&\Rightarrow& (I-aT^*)h = 0.
\end{eqnarray*}
Since $a \in \rho(T)$, it follows that $h=0$.
Consequently,
\begin{eqnarray*}
&& (V_0(\ker V)^{\perp})^{\perp} 
   \cap ((V_0-aI)(\ker V)^{\perp})^{\perp\perp}
   = \{0\} \\
&\Rightarrow&
\big(V_0(\ker V)^{\perp}
+ ((V_0-aI)(\ker V)^{\perp})^{\perp}\big)^{\perp}
= \{0\} \\
&\Rightarrow&
\mathcal{H}
= \overline{V_0(\ker V)^{\perp}
+ ((V_0-aI)(\ker V)^{\perp})^{\perp}}
= \overline{\rng(U_a)} .
\end{eqnarray*}
\textbf{Step 5.}
For $z \in \mathbb{C}\setminus \mathbb{T}$, the operator $(U_a-zI)$ is invertible. Hence, for any such $z$ and for every $h \in \mathcal{H}$, there exist unique elements 
$f \in (\ker V)^{\perp}$ and 
$\phi \in \big((V_0-aI)(\ker V)^{\perp}\big)^{\perp}$ 
such that
\begin{eqnarray*}
h
&=& (U_a-zI)(f+\phi) \\
&=& V_0 f + a\phi - zf - z\phi \\
&=& (V_0-zI)f + (a-z)\phi .
\end{eqnarray*}
Therefore,
\[
\mathcal{H}
=
(V_0-zI)(\ker V)^{\perp}
+
\big((V_0-aI)(\ker V)^{\perp}\big)^{\perp}.
\]
Since $f$ and $\phi$ are uniquely determined, the above decomposition is unique. Hence,
\[
\mathcal{H}
=
(V_0-zI)(\ker V)^{\perp}
\;\dotplus\;
\big((V_0-aI)(\ker V)^{\perp}\big)^{\perp}.
\]
The other direct sum decomposition follows directly from the definitions.	
\end{proof}

\begin{rmk} Clearly, the space $\big((V_0-aI)(\ker V)^{\perp}\big)^{\perp}$ is a closed subspace of $\mathcal{H}$. 
Since $a$ is a point of regular type of $V_0$, there exists $\varepsilon>0$ such that the ball $B_{\varepsilon}(a)$ consists entirely of points of regular type of $V_0$. 
Since $V_0$ is an isometry, it follows that 
\[
(\mathbb{C}\setminus\mathbb{T})\cup C_{\varepsilon}(a)
\]
is a connected subset of the field of regularity of $V_0$.
Observe that
\[
(V_0(\ker V)^{\perp})^{\perp}
=
\left\{
\begin{bmatrix}
D_{T^*}g\\
-\,T^*g
\end{bmatrix}
: g\in H
\right\},
\]
which is the range of the isometric operator
\[
\begin{bmatrix}
D_{T^*} & 0\\
-\,T^* & 0
\end{bmatrix}
\]
acting on $H\oplus\{0\}$. Hence $(V_0(\ker V)^{\perp})^{\perp}$ is infinite dimensional. 
It now follows from \cite[Section~78]{Akhiezer} that $\big((V_0-aI)(\ker V)^{\perp}\big)^{\perp}$ is also an infinite dimensional subspace.
\end{rmk}
It is shown in Theorem~\ref{Theorem D} that the decomposition \eqref{D} holds for all 
$z \in \mathbb{C}\setminus\mathbb{T}$. Trivially, it holds for the point $z=a$. In the following theorem, we show that it also holds for all 
$z \in C_{\varepsilon}(a)$ for some $\varepsilon>0$. 
\begin{thm}\label{Theorem 3.5}
Let the operators $T$ and $V_0$ be as defined above, and suppose that 
$a \in \mathbb{T}$ belongs to the resolvent set $\rho(T)$. Then the set of all 
points $z \in \mathbb{C}$ for which $M_z$ is closed and the direct sum 
decomposition \eqref{D} holds is an open set.
\end{thm}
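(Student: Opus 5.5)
Here $M_z$ denotes $(V_0-zI)(\ker V)^{\perp}$ and $Y := M_a^{\perp}$. The idea is to recast the statement as invertibility of a single bounded operator depending analytically on $z$, and then use that invertibility is an open condition. Define
\[
S_z : (\ker V)^{\perp}\oplus Y \longrightarrow \mathcal H,\qquad S_z(f,\phi) = (V_0-zI)f + \phi ,
\]
where the domain carries the external direct-sum (product) norm. The map $z\mapsto S_z = S_a + (a-z)J$, with $J(f,\phi)=f$, is affine in $z$, so it is norm-continuous. The first step is to prove the equivalence: \emph{$M_z$ is closed and \eqref{D} holds if and only if $S_z$ is a bijection}, hence, by the open mapping theorem, if and only if $S_z$ is boundedly invertible.

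For the direction "$\Leftarrow$": if $S_z$ is bijective, then $\mathcal H = M_z + Y$ with uniqueness, so the sum is direct. Moreover $M_z = S_z\big((\ker V)^{\perp}\oplus\{0\}\big)$ is the image of a closed subspace under an isomorphism, so it is closed.

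For the direction "$\Rightarrow$": assume $M_z$ is closed and \eqref{D} holds. Then $S_z$ is surjective. For injectivity, $S_z(f,\phi)=0$ gives $(V_0-zI)f = -\phi \in M_z\cap Y = \{0\}$, so $\phi=0$ and $(V_0-zI)f=0$. To conclude $f=0$ I need $z$ not to be an eigenvalue of $V_0$. I would get this either by folding "$z$ of regular type" into the meaning of "$M_z$ closed" (consistent with Remark~\ref{Remark 3.2}, which links closedness of the range to regular type at non-eigenvalues), or by checking directly: $V_0(h,0)=z(h,0)$ forces $D_Th=0$ and $Th=zh$, so $h$ is an eigenvector of the isometric part, which is impossible for $z\neq 0$ since $T$ is cnu. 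The case $z=0$ is trivial because $V_0$ is injective. I expect this bookkeeping—making "$M_z$ closed" carry enough information to give injectivity of $V_0-zI$ and an equivalence with invertibility of $S_z$—to be the only delicate point.

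With the equivalence in hand, openness follows at once. The set of invertible operators in $B\big((\ker V)^{\perp}\oplus Y,\mathcal H\big)$ is open, and $z\mapsto S_z$ is continuous, so $\{z : S_z \text{ invertible}\}$ is open. Concretely, if $S_{z_0}$ is invertible, then $S_z = S_{z_0}\big(I + (z_0-z)S_{z_0}^{-1}J\big)$ is invertible for $|z-z_0| < \|S_{z_0}^{-1}\|^{-1}$, by a Neumann series. For each such $z$ the first step gives both that $M_z$ is closed and that \eqref{D} holds.

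Finally, I would note that $z=a$ belongs to the set, by Remark~\ref{Remark 3.2} together with Step~3 and Step~4 of Theorem~\ref{Theorem D}, and that all of $\mathbb C\setminus\mathbb T$ belongs to it. This gives a neighbourhood $B_\varepsilon(a)$, and in particular the arc $C_\varepsilon(a)$, on which the decomposition \eqref{D} holds.
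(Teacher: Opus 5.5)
Your proof is correct, and it takes a different route from the paper's.

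\textbf{How the paper argues.} The paper works geometrically at the single point $a$. It estimates the gap $\delta(M_a^{\perp},M_z^{\perp})$ by comparing $(V_0-aI)f$ with $(V_0-zI)f$, using the lower bound $c_a$ of $V_0-aI$. It then applies the minimal-angle perturbation inequality (Lemma~\ref{Lemma 2}) together with $\sin\alpha(M_a,M_a^{\perp})=1$. This gives $M_z\cap M_a^{\perp}=\{0\}$ with $M_z+M_a^{\perp}$ closed. The symmetric statement for $M_a$ and $M_z^{\perp}$ then yields $M_z+M_a^{\perp}=\mathcal H$. Closedness of $M_z$ is obtained separately, from openness of the set of points of regular type.

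\textbf{How you argue.} You package both requirements into invertibility of the affine family $S_z(f,\phi)=(V_0-zI)f+\phi$. The only extra input is injectivity of $V_0-zI$, and your direct cnu check supplies it for every $z$. Indeed, $Th=zh$ and $D_Th=0$ force $|z|=1$ and $T^*h=\overline{z}h$, so $\mathbb C h$ would reduce $T$ to a unitary. The alternative of folding regular type into ``$M_z$ closed'' is therefore not needed.

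\textbf{What your route buys.}
First, it proves openness at every point $z_0$ of the set, with radius $\|S_{z_0}^{-1}\|^{-1}$. The paper's argument relies on $\sin\alpha(M_a,M_a^{\perp})=1$ and is carried out only at $z_0=a$. At any other point one would have to redo it with $\sin\alpha(M_{z_0},M_a^{\perp})>0$.
Second, it shows that $z\mapsto S_z^{-1}$ is analytic on the set. Hence $P_Y(z)$, being the $Y$-component of $S_z^{-1}$, is analytic. The paper uses this in Section~\ref{Section 4} without separate justification.

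\textbf{What the paper's route buys.} It stays within the gap/angle framework of Krein's theory. It also gives a radius controlled by $c_a$ alone.

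A minor remark: for $z=a$ you do not need Steps~3 and~4 of Theorem~\ref{Theorem D}. There $\mathcal H=M_a\oplus M_a^{\perp}$ is just the orthogonal decomposition along the closed subspace $M_a$.
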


To prove the above theorem, we recall the following definitions and results 
from \cite{KKM, GM1, GM2}.

The gap between two linear manifolds $N_1$ and $N_2$ of a Hilbert space $H$, 
denoted by $\delta(N_1,N_2)$, is defined by
\[
\delta(N_1,N_2)
=
\max \left\{
\sup_{\substack{x\in N_2 \\ \|x\|=1}} \rho(x,N_1),
\;
\sup_{\substack{y\in N_1 \\ \|y\|=1}} \rho(y,N_2)
\right\},
\]
where
\[
\rho(x,N_1)=\inf_{y\in N_1}\|x-y\|.
\]

We also define $\tilde{\delta}(N_1,N_2)$ by
\[
\tilde{\delta}(N_1,N_2)
=
\max \left\{
\sup_{x\in S(N_2)} \rho(x,S(N_1)),
\;
\sup_{y\in S(N_1)} \rho(y,S(N_2))
\right\},
\]
where $S(N_1)$ denotes the unit sphere of the subspace $N_1$.

Another equivalent definition of the gap (see \cite[Section 34]{Akhiezer}) is
\[
\delta(N_1,N_2)
=
\max \left\{
\sup_{\substack{f\in \overline{N_2} \\ \|f\|=1}} 
\|(I-P_1)f\|,
\;
\sup_{\substack{g\in \overline{N_1} \\ \|g\|=1}} 
\|(I-P_2)g\|
\right\},
\]
where $P_1$ and $P_2$ denote the orthogonal projections onto 
$\overline{N_1}$ and $\overline{N_2}$, respectively.

It follows from the definitions that
\[
\delta(N_1,N_2)
=
\delta(N_2,N_1)
=
\delta(N_1^{\perp},N_2^{\perp}).
\]

The relation between $\delta(N_1,N_2)$ and $\tilde{\delta}(N_1,N_2)$ is given by
\[
\delta(N_1,N_2)
\leq
\tilde{\delta}(N_1,N_2)
\leq
2\delta(N_1,N_2).
\]

The minimal angle $\alpha(L,N)$, where $0\leq \alpha(L,N)\leq \pi/2$, between 
subspaces $L$ and $N$ of a Hilbert space $H$ is defined by
\[
\cos \alpha(L,N)
:=
\sup_{\substack{x\in L,\; y\in N \\ \|x\|=\|y\|=1}}
|\langle x,y\rangle|.
\]

\begin{lemma}\label{Lemma 1}
Let $L_1$ and $L_2$ be subspaces of $H$. Then $L_1 \cap L_2=\{0\}$ and 
$L_1 \dotplus L_2$ is closed if and only if the minimal angle between them is positive.
\end{lemma}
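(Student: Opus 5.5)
We will treat $L_1$ and $L_2$ as \emph{closed} subspaces, which is how the lemma is used: the subspaces $(V_0-zI)(\ker V)^{\perp}$ and $((V_0-aI)(\ker V)^{\perp})^{\perp}$ are closed. Put $c:=\cos\alpha(L_1,L_2)\in[0,1]$. The statement "the minimal angle is positive" means $c<1$. We prove the two implications separately. The first is an elementary norm estimate; the second uses the open mapping theorem.

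\emph{Positive angle $\Rightarrow$ trivial intersection and closed sum.} Assume $c<1$. For $x\in L_1$ and $y\in L_2$ we have $|\langle x,y\rangle|\le c\|x\|\|y\|$, hence
\[
\|x+y\|^2\ \ge\ \|x\|^2+\|y\|^2-2c\|x\|\|y\|\ \ge\ (1-c)\bigl(\|x\|^2+\|y\|^2\bigr),
\]
using $2\|x\|\|y\|\le \|x\|^2+\|y\|^2$.

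\begin{itemize}
\item \textbf{Trivial intersection.} Let $h\in L_1\cap L_2$ and take $x=h$, $y=-h$. The estimate gives $0\ge 2(1-c)\|h\|^2$, so $h=0$.
\item \textbf{Closed sum.} Let $x_n+y_n\to w$ with $x_n\in L_1$, $y_n\in L_2$. Applying the estimate to the differences $x_n-x_m$ and $y_n-y_m$ shows that $(x_n)$ and $(y_n)$ are both Cauchy. Since $L_1$ and $L_2$ are closed, $x_n\to x\in L_1$ and $y_n\to y\in L_2$. Therefore $w=x+y\in L_1\dotplus L_2$, and the sum is closed.
\end{itemize}

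\emph{Trivial intersection and closed sum $\Rightarrow$ positive angle.} Assume $L_1\cap L_2=\{0\}$ and that $L_1\dotplus L_2$ is closed.

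\begin{itemize}
\item \textbf{Bounded inverse.} Consider the map $S:L_1\oplus L_2\to L_1\dotplus L_2$, $S(x,y)=x+y$, where the domain carries the external direct sum norm. $S$ is bounded. It is injective because the intersection is trivial, and surjective by definition. Both spaces are Banach: the domain because $L_1,L_2$ are closed, the target because the sum is closed. By the open mapping theorem, $S^{-1}$ is bounded, so there is $C>0$ with $\|x\|\le C\|x+y\|$ for all $x\in L_1$, $y\in L_2$.
\item \textbf{Contradiction if $c=1$.} Suppose $c=1$. Then there are unit vectors $x_n\in L_1$, $y_n\in L_2$ with $|\langle x_n,y_n\rangle|\to1$. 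Multiply $y_n$ by a unimodular scalar so that $\langle x_n,y_n\rangle\ge0$; this keeps $y_n$ a unit vector in $L_2$. Now
\[
\|x_n-y_n\|^2=2-2\langle x_n,y_n\rangle\to0,
\]
while the bound above gives $1=\|x_n\|\le C\|x_n-y_n\|$. This is a contradiction, so $c<1$.
\end{itemize}

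The only non-elementary step is the use of the open mapping theorem, and it requires closedness of $L_1$ and $L_2$. Without that hypothesis the equivalence can fail, so the standing assumption that both subspaces are closed must be stated explicitly.
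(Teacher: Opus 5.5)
Your proof is correct. The paper does not prove Lemma~\ref{Lemma 1}; it quotes it, together with Lemma~\ref{Lemma 2}, from the gap and minimal-angle literature cited just before it. Your write-up is therefore a self-contained substitute for that citation, and it is the standard argument: the estimate $\|x+y\|^2\ge(1-c)(\|x\|^2+\|y\|^2)$ for one direction and the bounded inverse theorem for the other.

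Your explicit closedness hypothesis is the right reading, since in those sources ``subspace'' means closed subspace. Without it both implications fail:
\begin{itemize}
\item A non-closed, non-dense $L_1$ with $L_2=L_1^{\perp}$ has angle $\pi/2$, but $L_1+L_2$ is not closed.
\item The kernel $L_1$ of a discontinuous linear functional is a dense hyperplane. Adding a complementary line $L_2$ gives the closed direct sum $H$ with zero angle.
\end{itemize}

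The lemma is used only in the proof of Theorem~\ref{Theorem 3.5}, and the hypothesis holds there. $M_z$ is closed for $z\in B_{\varepsilon_1}(a)$, and $M_a^{\perp}$, $M_z^{\perp}$ are orthogonal complements. Only the implication from positive angle to trivial intersection and closed sum is invoked there. That is the elementary half of your proof, and it needs no Baire category argument.
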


\begin{lemma}\label{Lemma 2}
For any three subspaces $N_1$, $N_2$, and $N_3$ of $H$, the following inequality holds:
\[
\sin\alpha(N_1,N_3)
\geq
\sin\alpha(N_1,N_2)
-
\tilde{\delta}(N_2,N_3).
\]
\end{lemma}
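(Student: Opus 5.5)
The plan is to rewrite $\sin\alpha$ as a distance and then use the triangle inequality. Distance functions are $1$-Lipschitz, and that is all the inequality requires.

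\textbf{Step 1: $\sin\alpha$ as a distance.} I would first show that for subspaces $L,N$ of $H$,
\[
\sin\alpha(L,N)=\inf_{x\in S(N)}\rho(x,L).
\]
Let $P_L$ be the orthogonal projection onto $\overline{L}$ and fix a unit vector $x\in N$. Then
\[
\sup_{y\in L,\ \|y\|=1}|\langle x,y\rangle|=\|P_Lx\|
\qquad\text{and}\qquad
\rho(x,L)^2=\|(I-P_L)x\|^2=1-\|P_Lx\|^2 .
\]
Taking the supremum over $x\in S(N)$ gives $\cos\alpha(L,N)=\sup_{x\in S(N)}\|P_Lx\|$. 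Since $t\mapsto\sqrt{1-t^2}$ is decreasing on $[0,1]$ and $0\le\alpha\le\pi/2$, the formula for $\sin\alpha(L,N)$ follows.

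Because the definition of $\cos\alpha$ is symmetric in $L$ and $N$, I may put the infimum on whichever side I like. I will take it over $S(N_3)$ with distances to $N_1$ for $\sin\alpha(N_1,N_3)$, and over $S(N_2)$ with distances to $N_1$ for $\sin\alpha(N_1,N_2)$.

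\textbf{Step 2: the triangle inequality.} Fix $x\in S(N_3)$ and $\varepsilon>0$. By the definition of $\tilde{\delta}$,
\[
\rho(x,S(N_2))\le\tilde{\delta}(N_2,N_3),
\]
so there is $u\in S(N_2)$ with $\|x-u\|\le\tilde{\delta}(N_2,N_3)+\varepsilon$. Since $\rho(\cdot,N_1)$ is $1$-Lipschitz,
\[
\rho(x,N_1)\ \ge\ \rho(u,N_1)-\|x-u\|\ \ge\ \sin\alpha(N_1,N_2)-\tilde{\delta}(N_2,N_3)-\varepsilon .
\]
The last step uses Step 1 and $u\in S(N_2)$. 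Taking the infimum over $x\in S(N_3)$ and then letting $\varepsilon\to0$ gives
\[
\sin\alpha(N_1,N_3)\ge\sin\alpha(N_1,N_2)-\tilde{\delta}(N_2,N_3).
\]

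\textbf{Main obstacle.} The only delicate point is Step 1, in particular checking that the supremum defining $\cos\alpha$ equals $\|P_Lx\|$ when $L$ is not closed. This holds because $L$ is dense in $\overline{L}$, so the supremum of $|\langle x,y\rangle|$ over unit vectors of $L$ is the same as over unit vectors of $\overline{L}$.

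\textbf{Degenerate cases.} If $N_2=\{0\}$ or $N_3=\{0\}$, the convention chosen for suprema over the empty unit sphere must be fixed so that both sides are interpreted consistently. In the nondegenerate situation $\tilde{\delta}$ is finite (at most $2$), and the argument above goes through verbatim.
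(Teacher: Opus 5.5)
Your proof is correct. It differs from the paper mainly in that the paper gives no proof of this lemma at all. It only recalls the lemma, together with Lemma~\ref{Lemma 1}, from the cited literature on gaps between subspaces (Krein--Krasnosel'skii--Milman, Gohberg--Markus), where such estimates are developed in a Banach-space setting.

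Your argument is a short, self-contained Hilbert-space proof built from two ingredients:
\begin{itemize}
\item Step~1 turns the paper's inner-product definition of $\cos\alpha$ into the distance formula $\sin\alpha(L,N)=\inf_{x\in S(N)}\rho(x,L)$, using the projection onto $\overline{L}$. The symmetry of $\cos\alpha$ in $L$ and $N$ then lets you take the infimum over $S(N_2)$ or $S(N_3)$ as needed.
\item Step~2 is just the $1$-Lipschitz property of $\rho(\cdot,N_1)$ combined with the definition of the spherical gap.
\end{itemize}

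What your route buys is transparency and a slightly sharper statement. It uses only the one-sided term $\sup_{x\in S(N_3)}\rho(x,S(N_2))$ of $\tilde{\delta}(N_2,N_3)$. It also does not depend on how the cited works normalize the minimal angle or gap, which matters because the paper applies the lemma with the arguments permuted. In the proof of Theorem~\ref{Theorem 3.5} it is used in the form $\sin\alpha(M_z,M_a^{\perp})\geq\sin\alpha(M_a,M_a^{\perp})-\tilde{\delta}(M_z,M_a)$, with $N_1=M_a^{\perp}$ in the second slot. What the cited approach buys is generality beyond Hilbert spaces, where no inner product is available.

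Your remark on degenerate cases is also fine. With the convention $\inf\emptyset=+\infty$, one gets $\tilde{\delta}=+\infty$ when exactly one of $N_2,N_3$ is $\{0\}$. When $N_2=N_3$ the inequality is trivial.
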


Now we prove Theorem \ref{Theorem 3.5}. 
\begin{proof}[Proof of Theorem \ref{Theorem 3.5}]
Let
\[
M_z = (V_0 - zI)(\ker V)^{\perp}.
\]
From Theorem \ref{Theorem D} and Remark \ref{Remark 3.2}, we know that for all 
$z \in \mathbb{C}\setminus \mathbb{T}$, the space $M_z$ is closed and the following 
direct sum decomposition holds:
\[
\mathcal{H}= M_z \dotplus M_a^{\perp}.
\]

Since $a$ is a point of regular type of $V_0$, there exists a constant $c_a>0$ 
(depending on $a$) such that
\[
\Vert (V_0- aI)f \Vert \geq c_a \Vert f \Vert.
\]

Therefore, the space $M_a$ is closed and the decomposition holds trivially for $z=a$. 
Since the set of points of regular type is open, there exists $\varepsilon_1>0$ such that 
every $z \in B_{\varepsilon_1}(a)$ is a point of regular type of $V_0$. Consequently, 
for such $z$, the space $M_z$ is closed.

Let $\varepsilon_2=\frac{c_a}{3}$. We will show that for all $z \in C_{\varepsilon_2}(a)$, 
the direct sum decomposition holds. Let $z \in C_{\varepsilon_2}(a)$. Then for all 
$h \in M_z^{\perp}$ with $\Vert h \Vert =1$, we have
\begin{eqnarray}
\Vert(I-P_a)h \Vert 
&=& \sup_{f \in H \oplus \{ 0 \}} 
\frac{\vert\langle h, (V_0- aI)f \rangle \vert}
{\Vert (V_0- aI)f \Vert} \nonumber \\
&=& \sup_{f \in H \oplus \{ 0 \}} 
\frac{\vert\langle h, (V_0- zI)f+(z-a)f \rangle \vert}
{\Vert (V_0- aI)f \Vert} \nonumber \\
&=& \sup_{f \in H \oplus \{ 0 \}} 
\frac{\vert\langle h, (z-a)f \rangle \vert}
{\Vert (V_0- aI)f \Vert} \label{eq3.4}\\
&\leq & \sup_{f \in H \oplus \{ 0 \}} 
\frac{c_a}{3}
\frac{\vert\langle h,f \rangle \vert}
{c_a\Vert f \Vert} 
\leq \frac{1}{3}. \nonumber
\end{eqnarray}

Similarly, for all $h \in M_a^{\perp}$ with $\Vert h \Vert =1$, we obtain
\begin{equation}\label{eq3.5} 
\Vert(I-P_z)h \Vert \leq \frac{1}{3}.
\end{equation}
Now, by the definition of the gap and by inequalities \eqref{eq3.4} and \eqref{eq3.5}, we obtain
\[
\delta(M_a^{\perp}, M_z^{\perp}) \leq \frac{1}{3}.
\]
Hence,
\[
\tilde{\delta}(M_a, M_z)\leq \frac{2}{3}.
\]
Observe that
\[
\sup_{\substack{x \in M_a,\; y \in M_a^{\perp} \\
\Vert x\Vert =\Vert y\Vert=1}} 
\vert \langle x,y \rangle \vert= 0.
\]
This implies that $
\sin \alpha(M_a, M_a^{\perp})=1.
$
Similarly,
$
\sin \alpha(M_z, M_z^{\perp})=1
$
for all $z \in C_{\varepsilon_2}(a)$.
By Lemma \ref{Lemma 2}, we have
\begin{eqnarray*}
\sin \alpha(M_z, M_a^{\perp})
&\geq&  \sin \alpha(M_a, M_a^{\perp})- \tilde{\delta}(M_z, M_a)\\
&=&1- \tilde{\delta}(M_z, M_a)>0.
\end{eqnarray*}
Hence,
\[
\alpha(M_z, M_a^{\perp}) >0.
\]
Therefore, by Lemma \ref{Lemma 1}, we obtain
\begin{equation}\label{3.4} 
M_z \cap M_a^{\perp}= \{ 0\}
\quad \text{and}\quad 
M_z \dotplus M_a^{\perp}~\text{is closed}.
\end{equation}
Interchanging $z$ and $a$ in the above arguments, and using the symmetry
\[
\tilde{\delta}(M_z, M_a)= \tilde{\delta}(M_a, M_z),
\]
we also obtain
\begin{equation}\label{3.5} 
M_a \cap M_z^{\perp}= \{ 0\}
\quad\text{and}\quad 
M_a \dotplus M_z^{\perp}~\text{is closed}.
\end{equation}
We recall that for any two closed subspaces $M$ and $N$ of a Banach space,
\[
(M+N)^{\perp}= M^{\perp} \cap N^{\perp}.
\]
Now, from \eqref{3.4} and \eqref{3.5}, we obtain
$
\mathcal{H}= \{ 0\}^{\perp} 
= (M_a \cap M_z^{\perp})^{\perp}
= \overline{M_z +M_a^{\perp}}
= M_z +M_a^{\perp}.
$
Hence, for all $z \in C_{\varepsilon}(a)$, where $
\varepsilon= \min \{ \varepsilon_1, \varepsilon_2 \},$
the space $M_z$ is closed and the direct sum decomposition \eqref{D} holds.
\end{proof}



Throughout the rest of the paper, we will consider the following domain $\Omega_a$ which is symmetric with respect to the unit circle and contains the open unit disc: 
\begin{equation}\label{Omega}
\Omega_a= (\mathbb{C}\setminus\mathbb{T}) \cup C_{\varepsilon}(a) 
\end{equation}


Note that for $z \in \mathbb{T}$ the following implications hold. These follow directly from Remarks \ref{Remark 3.2} and \ref{Remark 3.3}:
\[
\begin{aligned}
z \in \Omega_a
&\Rightarrow z \text{ is a point of regular type of } V \\
&\Leftrightarrow z \in \rho(V) \\
&\Leftrightarrow z \in \rho(T).
\end{aligned}
\]

\section{Abstract construction of the vector valued RKHS and functional model}
\label{Section 4}
In this section, we construct a vector valued RKHS from the direct sum decomposition \eqref{D} given in the previous section, and show that under some assumptions, it is a vector valued de Branges space.

Denote\[Y := \big((V_0-aI)(\ker V)^{\perp}\big)^{\perp}.\] By \eqref{D}, for every $f \in \mathcal{H}$ there exists a unique 
$g \in (\ker V)^{\perp}$ (depending on $z$) and a projection operator 
$P_Y(z)$ such that
\begin{equation}\label{4.1}
f = (V_0 - zI)g + P_Y(z)f, 
\qquad z \in \Omega_a.
\end{equation}
The operator $P_Y(z)$ is a bounded linear operator for all $z \in \Omega_a$. 
Fix $f \in \mathcal{H}$ and consider the map from $\Omega_a$ to $Y$ defined by
\[
z \mapsto P_Y(z)f.
\]
We denote this $Y$-valued function by $f_Y$, where
\[
f_Y(z) = P_Y(z)f.
\]
Define the space $\mathbf{H}$ by
\[
\mathbf{H} = \{ f_Y : f \in \mathcal{H} \}.
\]
Then $\mathbf{H}$ consists of vector valued analytic functions defined on $\Omega_a$. Consider the map  
$\Psi : \mathcal{H} \to \mathbf{H}$ defined by
\begin{equation}\label{Psi}
\Psi(f) = f_Y.
\end{equation}

Suppose that $\Psi f=0$ for some $f \in \mathcal{H}$. Then, for all $z \in \Omega_a$, we have
\[
0 = f_Y(z) = P_Y(z)f.
\]
This implies that
\begin{eqnarray*}
f &\in& (V_0-zI)(\ker V)^{\perp} = (V-zI)(\ker V)^{\perp} 
\quad \text{for all } z \in \Omega_a,\\
&\Rightarrow& f \in \bigcap_{z\in \Omega_a} (V-zI)(\ker V)^{\perp}.
\end{eqnarray*}

To prove that the map $\Psi$ is injective, we first state and prove the following general result for cnu partial isometries.

\begin{lemma}
Let $V$ be a cnu partial isometry on a Hilbert space $\mathcal{H}$. Then
\[
\bigcap_{z\in \mathbb{C}\setminus \mathbb{T}} (V-zI)(\ker V)^{\perp}=\{0\}.
\]
\end{lemma}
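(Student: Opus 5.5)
Let $h\in\bigcap_{z\in\mathbb{C}\setminus\mathbb{T}}(V-zI)(\ker V)^{\perp}$. The plan is to show that the closed subspace generated by such vectors reduces $V$, and that $V$ restricted to it is unitary; cnu then forces it to be zero. The tool is the Wold-type structure of the partial isometry $V$ on $(\ker V)^\perp$, together with the resolvent $(I-zV)^{-1}$ for $|z|<1$ and $(V-zI)^{-1}$ for $|z|>1$.

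\textbf{Step 1: pass to orthogonal complements.} We have $h\in(V-zI)(\ker V)^\perp$ if and only if $h\perp N_z:=((V-zI)(\ker V)^\perp)^\perp$ (closedness follows from regular type for $z\notin\mathbb{T}$). So the claim is equivalent to $\overline{\operatorname{span}}\bigcup_{z\notin\mathbb{T}}N_z=\mathcal H$. A vector $u$ lies in $N_z$ iff $P_{(\ker V)^\perp}(V^*-\bar zI)u=0$, i.e. $V^*u-\bar z\,P_{(\ker V)^\perp}u=0$, because $V^*u\in(\ker V)^\perp$ already.

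\textbf{Step 2: produce explicit elements of $N_z$.} For $|z|>1$, set $w=1/\bar z$. For $k\in\ker V^*$ put $u=(I-wV)^{-1}k$; this is well defined since $\|wV\|<1$. Then $u-wVu=k$. Applying $V^*$ and using $V^*V=P_{(\ker V)^\perp}$ together with $V^*k=0$ gives $V^*u=wP_{(\ker V)^\perp}u$, i.e. $V^*u=\bar z^{-1}P u$. This is not yet the relation needed, so I will adjust: the correct candidate for $z$ with $|z|<1$ is $u=(I-\bar zV)^{-1}k$, which gives $V^*u=\bar z Pu$, so $u\in N_z$. For $|z|>1$, the analogous elements come from $\ker V$: if $\ell\in\ker V$, take $u=(I-\bar z^{-1}V^*)^{-1}\ell$, checking the relation by symmetry. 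In every case, the coefficients of a power series expansion in $\bar z$ (or in $\bar z^{-1}$) lie in the span of the $N_z$. Hence that span contains $V^nk$ for all $k\in\ker V^*$ and $n\ge0$, and $V^{*n}\ell$ for all $\ell\in\ker V$ and $n\ge0$.

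\textbf{Step 3: conclude via cnu.} Let $\mathcal K$ be the orthogonal complement of the closed span of $\{V^n\ker V^*,\ V^{*n}\ker V: n\ge0\}$. The remaining task is to show that $\mathcal K$ reduces $V$ and that $V|_{\mathcal K}$ is unitary. Invariance under $V$ and $V^*$ is routine: for example, if $x\perp V^n\ker V^*$ for all $n$, then $V^*x\perp V^{n}\ker V^*$ as well, using $V^*V^{n+1}k=V^nk$ for $n\ge0$ on the relevant vectors, since $V^nk\in\operatorname{rng}V\subseteq$ the initial space is handled by $V^*V=P$. In addition $\mathcal K\perp\ker V$ and $\mathcal K\perp\ker V^*$, so $V$ is isometric and co-isometric on $\mathcal K$, hence unitary. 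Since $V$ is cnu, $\mathcal K=\{0\}$, and therefore $h=0$.

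The main obstacle is Step 2, in particular verifying the membership relation precisely on the exterior of the disc and checking that the partial-isometry identities hold for the powers $V^nk$, where one has to track $V^*V=P_{(\ker V)^\perp}$ carefully. If this is problematic for $|z|>1$, I will first try to use only $|z|<1$ together with the symmetric argument applied to $V^*$.
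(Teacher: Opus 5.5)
Your proof is correct. It is the adjoint (orthogonal-complement) form of the paper's argument, with a different final step.

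\textbf{Comparison with the paper.} The paper writes $h=(V-zI)x_z$ with $x_z\in(\ker V)^\perp$. It expands $x_z=(V-zI)^{-1}h$ for $|z|>1$ and $x_z=(I-zV^*)^{-1}V^*h$ for $|z|<1$, and reads off $\mathbf P_{\ker V}V^nh=0$ and $\mathbf P_{\ker V^*}V^{*n}h=0$ for all $n\ge0$. These are exactly your conditions $h\perp V^{*n}\ker V$ and $h\perp V^n\ker V^*$. You reach them instead by exhibiting the vectors $(I-\bar zV)^{-1}k$ (with $|z|<1$, $k\in\ker V^*$) and $(I-\bar z^{-1}V^*)^{-1}\ell$ (with $|z|>1$, $\ell\in\ker V$) inside $M_z^\perp$, where $M_z=(V-zI)(\ker V)^\perp$.

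The real difference is the endgame. The paper shows that the closed span of $\{V^nh\}_{n\ge0}\cup\{V^{*n}h\}_{n\ge1}$ reduces $V$ and carries a unitary restriction. You use instead the maximal such subspace $\mathcal K$, the unitary part of $V$.
\begin{itemize}
\item The paper's route is shorter and needs no description of $M_z^\perp$.
\item Your route gives the sharper fact $\bigcap_{z\notin\mathbb T}M_z=\mathcal K$ for every partial isometry. For the inclusion $\supseteq$, note that $V|_{\mathcal K}-zI$ is invertible on $\mathcal K\subseteq(\ker V)^\perp$.
\item Equivalently, $\overline{\operatorname{span}}\bigcup_{z\notin\mathbb T}M_z^\perp=\mathcal H$ holds if and only if $V$ is cnu. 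This is the analogue of the classical simplicity criterion for symmetric operators in terms of deficiency subspaces.
\end{itemize}

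\textbf{Two justifications to repair.} The claims themselves are true.

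In Step 2, the exterior case does not follow ``by symmetry'': swapping $V$ and $V^*$ produces a relation for $V^*$, not for $V$. Check it directly instead. From $u-\bar z^{-1}V^*u=\ell$ you get $u-\ell=\bar z^{-1}V^*u\in(\ker V)^\perp$. Hence $\mathbf P_{(\ker V)^\perp}u=u-\ell$, and $V^*u=\bar z(u-\ell)=\bar z\,\mathbf P_{(\ker V)^\perp}u$. Also, the power-series coefficients are obtained only in the \emph{closed} span of the $M_z^\perp$. To see this, pair with a vector orthogonal to all of them and use the identity theorem. The closed span is all you need.

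In Step 3, the identity $V^*V^{n+1}k=V^nk$ is false in general; the left side equals $\mathbf P_{(\ker V)^\perp}V^nk$. Also, $\operatorname{rng}V$ is the final space and is not contained in the initial space. A clean argument runs as follows.
\begin{itemize}
\item A vector $x$ lies in $\mathcal K$ if and only if $V^nx\perp\ker V$ and $V^{*n}x\perp\ker V^*$ for all $n\ge0$.
\item The case $n=0$ gives $V^*Vx=x$ and $VV^*x=x$.
\item Hence $V^{*n}(Vx)=V^{*(n-1)}x$ and $V^n(V^*x)=V^{n-1}x$ for $n\ge1$.
\item Moreover $Vx\in\operatorname{rng}V\perp\ker V^*$ and $V^*x\in(\ker V)^\perp$.
\item The conditions $V^n(Vx)\perp\ker V$ and $V^{*n}(V^*x)\perp\ker V^*$ are immediate.
\end{itemize}
Thus $Vx,V^*x\in\mathcal K$, so $\mathcal K$ reduces $V$. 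The rest of your Step 3 then goes through as written.
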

\begin{proof}
Let if possible there exists a non zero $h$ such that $$h \in \bigcap_{z\in \mathbb{C}\setminus \mathbb{T}} (V-zI)(\ker V)^{\perp}.$$
First, let \(|z|>1\). Since \(\|V\|\leq 1\), the operator \(V-zI\) is invertible and
\[
(V-zI)^{-1}
=
-\sum_{n=0}^{\infty} z^{-n-1}V^n.
\]
Since, \(h\in (V-zI)(\ker V)^{\perp}\), there exists $x_z \in (\ker V)^{\perp}$ such that 
\begin{eqnarray*}
&& h=(V-zI)x_z\\
&\Rightarrow & x_z=(V-zI)^{-1}h.
\end{eqnarray*} Hence
\[
0=\mathbf{P}_{\ker V}x_z
=
-\sum_{n=0}^{\infty} z^{-n-1}\mathbf{P}_{\ker V}V^n h,
\qquad |z|>1,
 \]
where $\mathbf{P}_{\ker V}$ denotes the orthogonal projection onto $\ker V$.
Since the above vector valued analytic function vanishes identically on \(\{|z|>1\}\), all its coefficients must vanish. Therefore
\[
\mathbf{P}_{\ker V}V^n h=0,
\qquad n\geq 0,
\]
that is,
\[
V^n h\in (\ker V)^{\perp},
\qquad n\geq 0.
\]

Next, let \(|z|<1\). Since \(h\in (V-zI)(\ker V)^{\perp}\), there exists \(x_z\in (\ker V)^{\perp}\) such that
\[
h=(V-zI)x_z.
\]
Applying \(V^*\) and using that \(V^*V=I\) on \((\ker V)^{\perp}\), we get
\[
V^*h=V^*Vx_z-zV^*x_z=x_z-zV^*x_z,
\]
so that
\[
x_z=(I-zV^*)^{-1}V^*h=\sum_{n=0}^{\infty} z^n V^{*\,n+1}h.
\]
Applying \(\mathbf{P}_{\ker V^*}\), the orthogonal projection onto $\ker V^*$, to the identity \(h=(V-zI)x_z\), we obtain
\[
\mathbf{P}_{\ker V^*} h=\mathbf{P}_{\ker V^*}(Vx_z-z x_z).
\]
Since, $Vx_z \in V(\ker V)^{\perp}= \rng V= (\ker V^*)^{\perp}$, we get that $\mathbf{P}_{\ker V^*}Vx_z=0$. Hence, $$\mathbf{P}_{\ker V^*} h=-z\,\mathbf{P}_{\ker V^*} x_z$$
Substituting the above expansion of \(x_z\), we get
\begin{eqnarray*}
\mathbf{P}_{\ker V^*} h
&=&
-z\,\sum_{n=0}^{\infty} z^n \mathbf{P}_{\ker V^*} V^{*\,n+1}h\\
&=&- \sum_{n=0}^{\infty} z^{n+1} \mathbf{P}_{\ker V^*} V^{*\,n+1}h
\end{eqnarray*}

The right-hand side has no constant term, hence \(\mathbf{P}_{\ker V^*} h=0\). It follows that all coefficients vanish, and thus
\[
\mathbf{P}_{\ker V^*} V^{*\,n}h=0,
\qquad n\geq 0.
\]
Therefore
\[
V^{*\,n}h \in (\ker V^*)^{\perp}=\rng V,
\qquad n\geq 0.
\]

Now define
\[
\mathcal{N}
:=
\overline{\operatorname{span}}
\bigl(
\{V^n h:n\geq 0\}\cup \{V^{*n}h:n\geq 1\}
\bigr).
\]
We claim that \(\mathcal{N}\) is reducing for \(V\). Indeed, for \(n\geq 0\),
\[
V(V^n h)=V^{n+1}h\in \mathcal{N},
\]
and for \(m\geq 1\), since \( V^{*m}h \in \rng V \), we have \(VV^*=I\) on these vectors, so
\[
V(V^{*m}h)=VV^*V^{*(m-1)}h=V^{*(m-1)}h\in \mathcal{N}.
\]
Thus \(V\mathcal{N}\subseteq \mathcal{N}\).

Similarly, for \(m\geq 1\),
\[
V^*(V^{*m}h)=V^{*(m+1)}h\in \mathcal{N},
\]
and for \(n\geq 1\), since \(V^n h\in (\ker V)^{\perp}\), we have \(V^*V=I\) on these vectors, so
\[
V^*(V^n h)=V^{n-1}h\in \mathcal{N}.
\]
Also \(V^*h\in \mathcal{N}\). Hence \(V^*\mathcal{N}\subseteq \mathcal{N}\), and therefore \(\mathcal{N}\) is reducing for \(V\).

Finally, we show that \(V|_{\mathcal{N}}\) is unitary. Using the same reasoning as above, we get that for $m \geq 1$, $$VV^*(V^{*m}h)= V^{*m}h,$$ and $$V^*V(V^{*m}h)= V^*VV^*V^{*(m-1)}h= V^*V^{*(m-1)}h= V^{*m}h.$$
Similarly, for $n\geq 1$, $$VV^*(V^nh)= VV^*V(V^{n-1}h)= V(V^{n-1}h)= V^nh,$$
and for $n \geq 0$, $$V^*V(V^nh)=V^nh.$$
By linearity and continuity, we get that $\mathcal{N}$ is a non-zero proper closed subspace reducing $V$ and $V \big|_{\mathcal{N}}$ is unitary. This contradicts the fact that $V$ is cnu. Hence, $$\bigcap_{z\in \mathbb{C}\setminus \mathbb{T}} (V-zI)(\ker V)^{\perp}= \{0 \}.$$
\end{proof}
Since
\[
\bigcap_{z\in \Omega_a} (V-zI)(\ker V)^{\perp}
\subseteq 
\bigcap_{z\in \mathbb{C}\setminus \mathbb{T}} (V-zI)(\ker V)^{\perp},
\]
it follows from the above lemma that $f=0$. Hence, the map $\Psi$ is injective.

Therefore, $\mathbf{H}$ is a vector space of analytic vector valued functions with the usual pointwise addition and scalar multiplication. We define an inner product on $\mathbf{H}$ by
\[
\langle f_Y , g_Y \rangle_{\mathbf{H}}
:= \langle f , g \rangle_{\mathcal{H}}.
\]

With this inner product, the map $\Psi$ becomes a unitary operator from $\mathcal{H}$ onto $\mathbf{H}$. Consequently, $\mathbf{H}$ is a Hilbert space. Moreover, it is also a RKHS, since the pointwise evaluations are bounded. 
Indeed,
\[
\|f_Y(z)\| = \|P_Y(z)f\|
\leq \|P_Y(z)\|\,\|f\|_{\mathcal{H}} .
\]
Observe that for all $f \in (\ker V)^{\perp}$,
\begin{eqnarray*}
V_0 f
&=& V_0 f - zf + zf \\
&=& (V_0 - zI)f + zf \\
&=& (V_0 - zI)f + z\big((V_0 - zI)g + P_Y(z)f\big) \\
&=& (V_0 - zI)(f + zg) + zP_Y(z)f .
\end{eqnarray*}
This implies that
\[
P_Y(z)(V_0 f) = z\, P_Y(z)f .
\]
Hence the operator $V_0$ is unitarily equivalent to the multiplication 
operator $\mathfrak{T}$ on its domain $\mathcal{D}(\mathfrak{T})$ given by $$(\mathfrak{T}f)(z)= z f(z), \quad f \in \mathcal{D}(\mathfrak{T}).$$ In particular,
\begin{equation}\label{T}
T = \mathbf{P}_H V_0
   = \mathbf{P}_H V\big|_H
   \cong 
   \mathbf{P}_{\mathcal{D}(\mathfrak{T})}
   \mathfrak{T}\big|_{\mathcal{D}(\mathfrak{T})},
\end{equation}
where $\mathbf{P}_H$ denotes the orthogonal projection onto the subspace $H$.

In the following theorem of this section, we show that under some conditions the space $\mathbf{H}$ is a vector valued 
de Branges space $\mathcal{B}(\mathfrak{E})$.
\begin{thm}\label{Theorem M}
Let $T$ be a cnu contraction operator on a Hilbert space $H$ such that a point $a \in \mathbb{T}$ belongs to $\rho(T)$. 
Let $V_0$ and $V$ be the contraction operators defined in \eqref{V0} and \eqref{V}, respectively, and let $\mathbf{H}$ be the RKHS as described above. Suppose that there exists at least one $\beta \in \mathbb{D}$ such that the following conditions hold:
\begin{itemize}
\item[i)] $\dim(M_z^{\perp} \cap M_{\beta}) < \infty$ and $M_z^{\perp} + M_{\beta}$ is closed for all $z \in \Omega_a\setminus \mathbb{T}$,
\item[ii)] $\dim(M_z^{\perp} \cap M_{\frac{1}{\overline{\beta}}}) < \infty$ and $M_z^{\perp} + M_{\frac{1}{\overline{\beta}}}$ is closed for all $z \in \Omega_a\setminus \mathbb{T}$,
\item[iii)] $\dim(M_0 \cap M_{\beta}^{\perp}) < \infty$ and $\dim(M_0 \cap M_{\frac{1}{\overline{\beta}}}^{\perp}) < \infty$,
\end{itemize}
where $M_z = (V_0 - zI)(\ker V)^{\perp}$.
Then the space $\mathbf{H}$ is a vector valued de Branges space $\mathcal{B}(\mathfrak{E})$ of analytic functions on $\Omega_a$. 
Moreover, the operator $V_0$ on $H \oplus \{0\}$ is unitarily equivalent to the multiplication operator $\mathfrak{T}$ on $\mathcal{D}(\mathfrak{T}) \subseteq \mathbf{H}$. Consequently, the operator $T$ is unitarily equivalent to $\mathbf{P}_{\mathcal{D}(\mathfrak{T})}
   \mathfrak{T}\big|_{\mathcal{D}(\mathfrak{T})}$.
\end{thm}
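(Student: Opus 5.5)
We follow the strategy used for the upper half plane case in \cite{mahapatra}. The idea is to write the reproducing kernel of $\mathbf{H}$ in the form \eqref{Equation 2.4} for a suitable pair $(E_-,E_+)$, and then check conditions (1)--(3) of a de Branges operator. The unitary equivalence of $V_0$ with $\mathfrak{T}$, and hence of $T$ with $\mathbf{P}_{\mathcal{D}(\mathfrak{T})}\mathfrak{T}|_{\mathcal{D}(\mathfrak{T})}$, has already been obtained from the identity $P_Y(z)V_0f=zP_Y(z)f$ and \eqref{T}. So the real content is the identification $\mathbf{H}=\mathcal{B}(\mathfrak{E})$.

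\textbf{Step 1: the kernel of $\mathbf{H}$.} Since $\Psi$ is unitary, for $u\in Y$ we have
\[
\langle f_Y(z),u\rangle=\langle P_Y(z)f,u\rangle=\langle f,P_Y(z)^*u\rangle .
\]
Hence the kernel is $K_\xi(z)=P_Y(z)P_Y(\xi)^*|_Y$, where we regard $Y$ (infinite dimensional, and identified with $H$ by a fixed unitary) as the coefficient space. Note that $P_Y(z)$ is the skew projection onto $Y$ along $M_z$. Therefore $\ker P_Y(z)=M_z$ and $\rng P_Y(z)^*=M_z^\perp$.

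\textbf{Step 2: a kernel identity.} The next goal is an identity of the form
\[
(1-z\bar\xi)\,P_Y(z)P_Y(\xi)^*=E_+(z)E_+(\xi)^*-E_-(z)E_-(\xi)^*,
\]
with natural candidates
\[
E_+(z)=c_+\,P_Y(z)P_Y(\beta)^*|_Y,\qquad E_-(z)=c_-\,(\text{a similar expression built from } \tfrac1{\bar\beta}).
\]
To derive it, we use the resolvent-type identity
\[
P_Y(z)-P_Y(w)=(z-w)\,P_Y(z)\,\Gamma(w),
\]
where $\Gamma(w)$ is defined through the component $g$ in \eqref{4.1}; this follows by comparing the decompositions \eqref{4.1} at $z$ and at $w$. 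We then combine it with the isometric property of $U_a$ (Step 2 of Theorem~\ref{Theorem D}), which gives a Cayley-transform relation between $P_Y(z)$ and $P_Y(1/\bar z)$. Running this with $w=\beta$ and $w=1/\bar\beta$ and subtracting should produce the identity, with constants of the form $c_\pm=(1-|\beta|^2)^{-1/2}$-type normalizations. This computation is the main obstacle. If the direct approach fails, we would first verify the identity at $\xi=\beta$ and then propagate it using the Cayley relation and analyticity in $z$ and $\bar\xi$.

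\textbf{Step 3: Fredholmness and condition (3).} $E_+(z)$ is essentially the compression of $P_Y(\beta)^*$ followed by $P_Y(z)$, a map $M_\beta^\perp\to Y$. Its kernel is
\[
\{u\in M_\beta^\perp:P_Y(z)u=0\}\cong M_\beta^\perp\cap M_z ,
\]
and its cokernel is governed by $M_z^\perp\cap M_\beta$ together with the closedness of $M_z^\perp+M_\beta$. Hypothesis (i) therefore gives that $E_+(z)$ is Fredholm for $z\in\Omega_a\setminus\mathbb{T}$. Hypothesis (ii) gives the same for $E_-$. On $\Omega_a\cap\mathbb{T}=C_\varepsilon(a)$, Fredholmness follows from Theorem~\ref{Theorem 3.5}, using openness of the Fredholm set and stability of the decomposition. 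Hypothesis (iii) gives finite dimensional kernels at $z=0$. This lets us show invertibility at some point: we show the index vanishes by continuity of the index on the connected set $\Omega_a$, and that the kernel is trivial at $z=\beta$, where $P_Y(\beta)P_Y(\beta)^*$ is injective on $Y$. Then Theorem~\ref{Theorem 2.5} gives invertibility at that point.

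Finally, positivity of $K_\xi(z)$ and the kernel identity give $E_+E_+^*-E_-E_-^*\succeq0$ on the diagonal of $\mathbb{D}$, which yields the contractivity of $E_+^{-1}E_-$ there. On $C_\varepsilon(a)$ the factor $1-|z|^2$ vanishes, so $E_+E_+^*=E_-E_-^*$. Together with invertibility off a discrete set, this gives unitarity of $E_+^{-1}E_-$ on $\Omega_a\cap\mathbb{T}$. By Moore's theorem, uniqueness of the RKHS then gives $\mathbf{H}=\mathcal{B}(\mathfrak{E})$.
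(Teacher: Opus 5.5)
\textbf{There is a genuine gap: the central identity is never proved, and your candidate functions cannot satisfy it.} The heart of your proposal is the identity $(1-z\bar\xi)P_Y(z)P_Y(\xi)^*=E_+(z)E_+(\xi)^*-E_-(z)E_-(\xi)^*$. You flag it as the main obstacle and give only a plan. Without it, the identification $\mathbf{H}=\mathcal B(\mathfrak E)$ is unsupported, and so is everything in your Step 3 that relies on it, including condition (3) of a de Branges operator.

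Taking $E_\pm$ to be scalar multiples of $K_\beta(z)=P_Y(z)P_Y(\beta)^*$ and $K_{1/\bar\beta}(z)$ drops the factors $\rho_\beta(z)$, $\rho_{1/\bar\beta}(z)$ and the operator-valued normalizations. The correct choice (Theorem~\ref{Theorem C}) is $E_+(z)=\rho_\beta(z)K_\beta(z)\bigl(\rho_\beta(\beta)K_\beta(\beta)\bigr)^{-1/2}$, and similarly for $E_-$. With this choice $E_-(\beta)=0$ and $E_+(\beta)E_+(\beta)^*=\rho_\beta(\beta)K_\beta(\beta)$, which is not a scalar in general. A concrete check: take the scalar kernel $K_\xi(z)=1+z\bar\xi=(1-z^2\bar\xi^2)/(1-z\bar\xi)$, i.e.\ $E_+=1$, $E_-=z^2$, with $\beta\neq0$. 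Then no constants $c_\pm$ work. The expression $|c_+|^2K_\beta(z)\overline{K_\beta(\xi)}-|c_-|^2K_{1/\bar\beta}(z)\overline{K_{1/\bar\beta}(\xi)}$ has no $z^2\bar\xi^2$ term, while $(1-z\bar\xi)K_\xi(z)$ does.

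\textbf{The missing idea is how the isometry of $V_0$ enters.} The paper never computes the kernel identity. It verifies the hypotheses of the de Branges-type characterization, Theorem~\ref{Theorem C}.
\begin{itemize}
\item Your resolvent identity for $P_Y$ is exactly the paper's Step 3. Comparing the decompositions \eqref{4.1} at two points gives $R_\beta\mathbf{H}_\beta\subseteq\mathbf{H}$ and $R_{1/\bar\beta}\mathbf{H}_{1/\bar\beta}\subseteq\mathbf{H}$.
\item The two-term structure comes from condition (2): $S_\beta=(I-\bar\beta\mathfrak T)R_\beta$ maps $\mathbf{H}_\beta$ isometrically onto $\mathbf{H}_{1/\bar\beta}$. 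This follows from Corollary~\ref{cor}, because $\mathfrak T$ is isometric on $\mathcal D(\mathfrak T)$, being unitarily equivalent via $\Psi$ to the isometry $V_0$.
\end{itemize}
Your plan never uses this. The unitarity of $U_a$ encodes it only implicitly, and you do not show how a Cayley relation between $P_Y(z)$ and $P_Y(1/\bar z)$ would produce the two-term form.

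\textbf{Your Fredholm count also has holes.}
\begin{itemize}
\item Hypothesis (i) controls only $M_z^\perp\cap M_\beta$ and the closedness of $\rng K_\beta(z)$. It does not control the kernel, which is $M_z\cap M_\beta^\perp$. For $z\neq0$, the paper transports this kernel by the generalized Cayley transform $U_{z,1/\bar\beta}$ (Lemma~\ref{Lemma Cayley}) onto $M_{1/\bar\beta}\cap M_{1/\bar z}^\perp$, and then applies (ii) at the point $1/\bar z$. Symmetrically, for $K_{1/\bar\beta}$ it uses $U_{z\beta}$ and applies (i) at $1/\bar z$. Hypothesis (iii) covers $z=0$.
\item On $C_\varepsilon(a)$, openness of the set of Fredholm points goes the wrong way. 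It spreads Fredholmness from a point to a neighbourhood, not from nearby points onto a boundary point.
\item The paper instead uses $\mathcal H=M_\beta\dotplus M_z^\perp=M_{1/\bar\beta}\dotplus M_z^\perp$ for $z\in\Omega_a\cap\mathbb T$. This is Theorem~\ref{Theorem D} with the resolvent point $z$ in place of $a$. It gives trivial kernel and cokernel and closed range at those points.
\end{itemize}
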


\begin{rmk}We want to remark here that the pairs $(M_z^{\perp}, M_{\beta})$ and $(M_z^{\perp}, M_{\frac{1}{\overline{\beta}}})$ satisfying the conditions i) and ii) given in the above theorem are semi-Fredholm. Recall that a pair $(A,B)$ of closed subspaces of a Banach space is said to be semi-Fredholm if $A+B$ is closed and at least one of the $\dim(A\cap B)$ and $\operatorname{codim}(A+B)$ is finite. For a detailed treatment of semi-Fredholm pairs of subspaces, we refer the reader to \cite[Chapter~4, Section~4]{kato}.
\end{rmk}

We now give a general criterion which will be used to produce concrete examples
satisfying the hypotheses of Theorem~\ref{Theorem M}. This criterion reduces the
verification of the subspace conditions in Theorem~\ref{Theorem M} to the
Fredholmness of certain naturally associated operators.


\begin{lemma}\label{Lemma 4.4}
Let $T\in B(H)$ be a cnu contraction on an
infinite dimensional Hilbert space $H$, and suppose that there exists $a\in\rho(T)\cap\mathbb T$. 
Assume that there exists a non zero $\beta\in\mathbb D$ such that the following two conditions hold:
\begin{enumerate}
\item for each $\alpha\in\{\beta,\frac{1}{\overline{\beta}}\}$ and each
$z\in\Omega_a\setminus\mathbb T$, the operator
\[
        \Phi_{z,\alpha}:=
        I-\alpha T^*-\overline z\,T+\overline z\alpha I
\]
is Fredholm;

\item for each $\alpha\in\{\beta,\frac{1}{\overline{\beta}}\}$, the operator $I-\overline\alpha T$ is Fredholm.
\end{enumerate}
Then $T$ satisfies all the hypotheses of Theorem~\ref{Theorem M}.
\end{lemma}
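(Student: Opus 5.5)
The plan is to make every subspace in Theorem~\ref{Theorem M} explicit through the parametrisations of $M_z$ and $M_\alpha$, and to compute the relevant Gram operators. These will turn out to be exactly $\Phi_{z,\alpha}$ and $I-\overline\alpha T$. For $w\in\mathbb C$ write $W_w:H\to\mathcal H$, $W_w h=(V_0-wI)(h,0)=((T-wI)h,\,D_Th)$, so that $M_w=\rng W_w$.

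For $|w|\neq 1$, $W_w$ is bounded below, because $V_0$ is an isometry: $\|W_wh\|\ge \bigl|1-|w|\bigr|\,\|h\|$. Hence $M_w$ is closed and $W_w:H\to M_w$ is an isomorphism. This applies to $w=z\in\Omega_a\setminus\mathbb T$, to $w=0$, and to $w=\alpha\in\{\beta,1/\overline\beta\}$. The latter is well defined and off $\mathbb T$ precisely because $0\neq\beta\in\mathbb D$, which is where the hypothesis $\beta\ne0$ enters.

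\textbf{Key identity.} For all $w,\alpha$, using $D_T^2=I-T^*T$,
\[
W_w^*W_\alpha=(T^*-\overline wI)(T-\alpha I)+D_T^2=I-\alpha T^*-\overline w T+\overline w\alpha I=\Phi_{w,\alpha}.
\]
In particular $W_z^*W_z=\Phi_{z,z}$ is invertible, since $W_z$ is bounded below, and $W_0^*W_\alpha=I-\alpha T^*$.

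\textbf{Conditions i) and ii).} Fix $\alpha\in\{\beta,1/\overline\beta\}$ and $z\in\Omega_a\setminus\mathbb T$.

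First, the intersection. An element $W_\alpha h\in M_\alpha$ lies in $M_z^\perp=\ker W_z^*$ if and only if $\Phi_{z,\alpha}h=0$. Since $W_\alpha$ is injective, this gives
\[
M_z^\perp\cap M_\alpha=W_\alpha(\ker\Phi_{z,\alpha}),
\]
which is finite dimensional by hypothesis (1).

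Second, the closedness. I will use the standard fact that for closed subspaces $M,N$, the sum $M^\perp+N$ is closed if and only if $P_MN$ is closed. Indeed, $M^\perp+N=M^\perp\oplus P_MN$, and an orthogonal sum with a closed summand is closed exactly when the other summand is closed. Now
\[
P_{M_z}=W_z(W_z^*W_z)^{-1}W_z^*,
\qquad\text{so}\qquad
P_{M_z}M_\alpha=W_z\Phi_{z,z}^{-1}\,\rng\Phi_{z,\alpha}.
\]
Here $\rng\Phi_{z,\alpha}$ is closed because $\Phi_{z,\alpha}$ is Fredholm, and $W_z\Phi_{z,z}^{-1}$ is bounded below. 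It follows that $P_{M_z}M_\alpha$ is closed, hence $M_z^\perp+M_\alpha$ is closed. Taking $\alpha=\beta$ gives i), and $\alpha=1/\overline\beta$ gives ii).

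\textbf{Condition iii).} An element $W_0h\in M_0$ lies in $M_\alpha^\perp=\ker W_\alpha^*$ if and only if $W_\alpha^*W_0h=(I-\overline\alpha T)h=0$. So
\[
M_0\cap M_\alpha^\perp=W_0\bigl(\ker(I-\overline\alpha T)\bigr),
\]
which is finite dimensional by hypothesis (2).

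\textbf{Expected obstacle.} The only delicate step is the closedness of $M_z^\perp+M_\alpha$. I expect to spend the most care on justifying the projection criterion and on checking that $W_z(W_z^*W_z)^{-1}$ maps closed subspaces of $H$ onto closed subspaces of $M_z$. Both follow from $W_z$ being an isomorphism onto the closed subspace $M_z$.
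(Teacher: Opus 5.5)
Your proposal is correct and follows essentially the same route as the paper: both identify $M_w$ as the range of $h\mapsto((T-wI)h,D_Th)$, compute the Gram operators $W_z^*W_\alpha=\Phi_{z,\alpha}$ and $W_\alpha^*W_0=I-\overline\alpha T$ to control the intersections, and obtain closedness of $M_z^\perp+M_\alpha$ from $P_{M_z}=W_z(W_z^*W_z)^{-1}W_z^*$ together with the closed range of the Fredholm operator $\Phi_{z,\alpha}$. The differences are only cosmetic: you obtain the equality $M_z^\perp\cap M_\alpha=W_\alpha(\ker\Phi_{z,\alpha})$ where the paper records an inclusion, and you write $M_z^\perp+M_\alpha=M_z^\perp\oplus P_{M_z}M_\alpha$ where the paper expresses the same sum as the preimage $Q_z^{-1}(Q_z(M_\alpha))$.
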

\begin{proof}
For $w\in\Omega_a$, define $A_w:H\to H\oplus H$
by
\[
        A_wx=((T-wI)x,D_Tx).
\]
Then $M_w=\mathrm{rng}A_w$.

We verify conditions $(i)$ and $(ii)$ of Theorem~\ref{Theorem M}. Fix $\alpha\in\{\beta,\frac{1}{\overline{\beta}}\}$ and $z\in\Omega_a\setminus\mathbb T$. Suppose that $y\in M_z^\perp\cap M_\alpha$. Since $y\in M_\alpha$, there exists $x\in H$ such that $ y=A_\alpha x$. The condition $y\in M_z^\perp$ is equivalent to $ A_\alpha x\perp \mathrm{rng}A_z$. Hence $A_z^*A_\alpha x=0$.
Therefore
\[
        M_z^\perp\cap M_\alpha
        \subseteq
        A_\alpha\ker(A_z^*A_\alpha).
\]
In particular,
\[
        \dim(M_z^\perp\cap M_\alpha)
        \leq
        \dim\ker(A_z^*A_\alpha).
\]

Now,
\[
\begin{aligned}
        A_z^*A_\alpha
        &=
        (T-zI)^*(T-\alpha I)+D_T^2  \\
        &=
        (T^*-\overline z I)(T-\alpha I)+I-T^*T  \\
        &=
        I-\alpha T^*-\overline z\,T+\overline z\alpha I.
\end{aligned}
\]
Thus
\[
        A_z^*A_\alpha=\Phi_{z,\alpha}.
\]
By assumption, $\Phi_{z,\alpha}$ is Fredholm. Therefore, $\dim\ker(A_z^*A_\alpha)<\infty$.
It follows that $\dim(M_z^\perp\cap M_\alpha)<\infty$. Taking $\alpha=\beta$ and $\alpha=\frac{1}{\overline{\beta}}$, we get
\[
        \dim(M_z^\perp\cap M_\beta)<\infty
\]
and
\[
        \dim(M_z^\perp\cap M_{\frac{1}{\overline{\beta}}})<\infty.
\]

Next, we prove the closedness of $M_z^\perp+M_\alpha$. Let $Q_z$ denote the
orthogonal projection from $H\oplus H$ onto $M_z$. Since $M_z$ is closed, $Q_z$
is well-defined, and $\ker Q_z=M_z^\perp$.
It is easy to observe that 
\[
        M_z^\perp+M_\alpha
        =
        Q_z^{-1}\bigl(Q_z(M_\alpha)\bigr).
\]
It remains to show that $Q_z(M_\alpha)$ is closed. Put
\[
        G_z=A_z^*A_z.
\]
Since $A_z$ is bounded below, $G_z$ is boundedly invertible on $H$. Moreover,
the orthogonal projection onto $M_z=\mathrm{rng}A_z$ is given by
\[
        Q_z=A_zG_z^{-1}A_z^*.
\]
Therefore, for $x\in H$,
\[
        Q_zA_\alpha x
        =
        A_zG_z^{-1}A_z^*A_\alpha x
        =
        A_zG_z^{-1}\Phi_{z,\alpha}x.
\]
Hence
\[
        Q_z(M_\alpha)
        =
        A_zG_z^{-1}\bigl(\mathrm{rng}\Phi_{z,\alpha}\bigr).
\]
The operator
\[
        A_zG_z^{-1}:H\to M_z
\]
is a Banach space isomorphism. Since $\Phi_{z,\alpha}$ is Fredholm, its range is
closed. Therefore $Q_z(M_\alpha)$ is closed. Consequently, $M_z^\perp+M_\alpha$ is closed. Taking $\alpha=\beta$ and $\alpha=\frac{1}{\overline{\beta}}$, we obtain that
\[
        M_z^\perp+M_\beta
        \quad\text{and}\quad
        M_z^\perp+M_{\frac{1}{\overline{\beta}}}
\]
are closed. Thus conditions $(i)$ and $(ii)$ of Theorem~\ref{Theorem M} are
satisfied.

Finally, we verify condition $(iii)$. Fix $\alpha\in\{\beta,\frac{1}{\overline{\beta}}\}$. Following the same steps as before, we get that
\[
        \dim(M_0\cap M_\alpha^\perp)
        \leq
        \dim\ker(A_\alpha^*A_0).
\]
Now,
\[
\begin{aligned}
        A_\alpha^*A_0
        &=
        (T-\alpha I)^*T+D_T^2  \\
        &=
        (T^*-\overline\alpha I)T+I-T^*T  \\
        &=
        I-\overline\alpha T.
\end{aligned}
\]
By assumption, $I-\overline\alpha T$ is Fredholm. Therefore $\dim\ker(A_\alpha^*A_0)<\infty$. Consequently, $\dim(M_0\cap M_\alpha^\perp)<\infty$.
Taking $\alpha=\beta$ and $\alpha=\frac{1}{\overline{\beta}}$, we get
\[
        \dim(M_0\cap M_\beta^\perp)<\infty
\]
and
\[
        \dim(M_0\cap M_{\frac{1}{\overline{\beta}}}^\perp)<\infty.
\]
This verifies condition $(iii)$ of Theorem~\ref{Theorem M}. Hence $T$ satisfies
all the hypotheses of Theorem~\ref{Theorem M}.
\end{proof}

\begin{rmk}\label{Remark 4.5}In particular, Lemma~\ref{Lemma 4.4} applies whenever $T-I$ is compact, because in that case the operators $\Phi_{z,\alpha}$ and $I-\overline\alpha T$ are compact perturbations of non-zero scalar multiples of the identity whenever $z\neq 1$ and $\alpha\neq 1$.
\end{rmk}
\begin{rmk}\label{Remark 4.6}More generally, Lemma~\ref{Lemma 4.4} also applies if \(T\) is essentially unitary, that is, if
\[
I-T^*T \quad \text{and} \quad I-TT^*
\]
are compact. Indeed, let $\pi: B( H)\to
         B( H)/ K( H)$ be the quotient map onto the Calkin algebra. Since compact operators vanish in the Calkin algebra, essential unitarity implies
\[
        \pi(T)^{*}\pi(T)=I
        \quad\text{and}\quad
        \pi(T)\pi(T)^{*}=I.
\]
Thus \(\pi(T)\) is a unitary element of the Calkin algebra. Since \(\alpha\notin\mathbb T\) and \(z\notin\mathbb T\), the elements
\[
I-\overline{\alpha}\pi(T)
\quad\text{and}\quad
(I-\overline z\,\pi(T))(I-\alpha\pi(T)^*)
\]
are invertible in the Calkin algebra. Hence, by \cite[Chapter XI, Theorem 5.2]{GGK}, the operators \(I-\overline{\alpha}T\) and \(\Phi_{z,\alpha}\) are Fredholm. For more details about essentially unitary operators and the properties used here, see \cite[Chapter 1-2]{CM}. In particular, if $T$ is a cnu contraction such that $\rho(T) \cap \mathbb{T} \neq \emptyset$, and both the defect operators $D_T$ and $D_T^*$ are compact, then the hypothesis of Theorem \ref{Theorem M} are satisfied.
\end{rmk}

The following examples illustrate different classes of cnu contractions satisfying the standing assumptions of Theorem~\ref{Theorem M}. The first example is a diagonal essentially unitary contraction, the second is a non-normal block weighted-shift-type contraction with \(T-I\) compact, and the last four arise from scalar and vector valued model spaces. In each case, Lemma \ref{Lemma 4.4}, together with Remark \ref{Remark 4.5} or Remark \ref{Remark 4.6}, implies that all the hypotheses of
Theorem~\ref{Theorem M} are satisfied.

\begin{example}
Let \(H=\ell^2(\mathbb N)\), and let \(\{e_n\}_{n\geq 1}\) be the standard orthonormal basis of \(H\). Let \((\lambda_n)_{n\geq 1}\subset \mathbb D\) be a
sequence such that $|\lambda_n|<1$ for every $n$, $|\lambda_n|\to 1$.
Assume that
\[
        \overline{\{\lambda_n:n\in\mathbb N\}}\neq \mathbb T.
\]
Choose and fix a point
\[
        a\in \mathbb T\setminus \overline{\{\lambda_n:n\in\mathbb N\}}.
\]
Define \(T\in B(H)\) by
\[
        T e_n=\lambda_n e_n,\qquad n\geq 1.
\]
Then \(T\) is a diagonal contraction. Indeed, if
\(x=\sum_{n=1}^{\infty}x_ne_n\in H\), then
\[
        \|Tx\|^2
        =
        \sum_{n=1}^{\infty}|\lambda_n|^2 |x_n|^2
        \leq
        \sum_{n=1}^{\infty}|x_n|^2
        =
        \|x\|^2.
\]
Moreover, \(T\) is cnu. In fact, for every non-zero
\(x=\sum_{n=1}^{\infty}x_ne_n\in H\), we have
$\Vert Tx \Vert < \Vert x \Vert$
because \(|\lambda_n|<1\) for every \(n\). Hence \(T\) cannot be unitary on any
non-zero reducing subspace of \(H\). Since \(T\) is a diagonal normal operator, its spectrum is
\[
        \sigma(T)=\overline{\{\lambda_n:n\in\mathbb N\}}.
\]
By assumption, \(a\notin \overline{\{\lambda_n:n\in\mathbb N\}}\). Therefore $a\in \rho(T)\cap \mathbb T$.
Finally, we show that \(T\) is essentially unitary. Since
\[
        T^*T e_n = |\lambda_n|^2 e_n
        \quad\text{and}\quad
        TT^* e_n = |\lambda_n|^2 e_n,
\]
we get
\[
        (I-T^*T)e_n = (1-|\lambda_n|^2)e_n
        \quad\text{and}\quad
        (I-TT^*)e_n = (1-|\lambda_n|^2)e_n.
\]
Since \(|\lambda_n|\to 1\), we have \(1-|\lambda_n|^2\to 0\). Hence both
\(I-T^*T\) and \(I-TT^*\) are compact diagonal operators. Thus \(T\) is
essentially unitary.

Consequently, by Remark \ref{Remark 4.6} and Lemma \ref{Lemma 4.4}, \(T\) satisfies all the
hypotheses of Theorem \ref{Theorem M}.
\end{example}

\begin{example}
Let $H=\bigoplus_{n=1}^{\infty}H_n$, where $ 
        H_n=\operatorname{span}\{e_n,f_n\}\cong \mathbb C^2$.
For each $n\geq 1$, put
\[
        r_n=\frac{n+1}{n+2},
        \qquad
        \gamma_n=\frac{1-r_n^2}{2}.
\]
Then $0<r_n<1$, $r_n\to 1$, and $\gamma_n\to 0$. Define
$T_n:H_n\to H_n$ by
\[
        T_ne_n=r_ne_n+\gamma_n f_n,
        \qquad
        T_nf_n=r_nf_n.
\]
Equivalently, with respect to the ordered basis $\{e_n,f_n\}$ of $H_n$,
\[
        T_n=
        \begin{pmatrix}
        r_n & 0\\
        \gamma_n & r_n
        \end{pmatrix}.
\]
Thus $T_n$ is a non-normal weighted-shift-type Jordan block. Define $T=\bigoplus_{n=1}^{\infty}T_n$ on $H$.
We first note that $T$ is a contraction. A direct computation gives
\[
        I-T_n^*T_n
        =
        \begin{pmatrix}
        1-r_n^2-\gamma_n^2 & -r_n\gamma_n\\
        -r_n\gamma_n & 1-r_n^2
        \end{pmatrix}.
\]
Putting $a_n=1-r_n^2$, we have $\gamma_n=a_n/2$. Hence
\[
        1-r_n^2-\gamma_n^2
        =
        a_n-\frac{a_n^2}{4}>0
\]
and
\[
        \det(I-T_n^*T_n)
        =
        \frac34a_n^2>0.
\]
Thus $I-T_n^*T_n>0$ for every $n$. Therefore each $T_n$ is a strict contraction,
and consequently $T$ is a contraction. Moreover, $T$ is cnu. Indeed, if
$x=\bigoplus_{n=1}^{\infty}x_n\in H$ is non-zero, then
\[
        \|x\|^2-\|Tx\|^2
        =
        \sum_{n=1}^{\infty}
        \bigl(\|x_n\|^2-\|T_nx_n\|^2\bigr)>0.
\]
Hence $\|Tx\|<\|x\|$ for every non-zero $x\in H$, and so $T$ cannot be unitary
on any non-zero reducing subspace.

Next, since each $T_n$ has spectrum $\{r_n\}$ and $r_n\to1$, we have
\[
        \sigma(T)=\{r_n:n\in\mathbb N\}\cup\{1\}.
\]
Thus $\mathbb T\setminus\{1\}\subset\rho(T)$.
In particular, we may choose $a=-1\in\rho(T)\cap\mathbb T$.
Finally,
\[
        T_n-I_{H_n}
        =
        \begin{pmatrix}
        r_n-1 & 0\\
        \gamma_n & r_n-1
        \end{pmatrix}.
\]
Since $r_n\to1$ and $\gamma_n\to0$, we have $\|T_n-I_{H_n}\|\to0$.
Therefore
\[
        T-I=\bigoplus_{n=1}^{\infty}(T_n-I_{H_n})
\]
is compact. Consequently, by Remark \ref{Remark 4.5} and Lemma \ref{Lemma 4.4}, $T$ satisfies all the hypotheses of Theorem~\ref{Theorem M}.
\end{example}

\begin{example}[Scalar valued model spaces]\label{Example 4.9}
Let $\theta$ be an inner function such that the corresponding model space
\[
        K_\theta := H^2 \ominus \theta H^2
\]
is infinite dimensional. Equivalently, \(\theta\) is not a finite Blaschke product (see \cite[Proposition 5.19]{GMR}). Suppose further that there exists a non-empty open arc $\Gamma \subset \mathbb T$ such that $\theta$ admits an analytic continuation through a neighbourhood of $\Gamma$. Choose and fix a point $a \in \Gamma$.
Let
\[
        T=S_\theta:=\mathbf{P}_{K_\theta}S|_{K_\theta},
\]
where $S$ denotes the unilateral shift on $H^2$ and $\mathbf{P}_{K_\theta}$ denotes the orthogonal projection from $H^2$ onto $K_\theta$. Then $S_\theta$ is the compressed shift on the scalar valued model space $K_\theta$ (see \cite[Section 9.2]{GMR} for more details).

We claim that $T=S_\theta$ satisfies all the hypotheses of Theorem \ref{Theorem M}. First, $T$ is a contraction, being a compression of the unilateral shift. Moreover, $T$ is cnu. Indeed, by \cite[Corollary 9.16]{GMR}, the compressed shift $S_\theta$ is irreducible; that is, it has no proper non-trivial reducing subspaces. On the other hand, by \cite[Lemma 9.9]{GMR}, the following identities hold:
\begin{equation}\label{id}
I-S_\theta S_\theta^*
        =
        k_0^\theta \otimes k_0^\theta \quad \text{and} \quad  I-S_\theta^*S_\theta
        =
        S^*\theta \otimes S^*\theta
\end{equation}
where $k_0^\theta$ is the reproducing kernel of $K_\theta$ at $0$. Hence $S_\theta$ is not unitary on $K_\theta$. Since the only reducing subspaces of $S_\theta$ are $\{0\}$ and
$K_\theta$, it follows that $S_\theta$ has no non-zero reducing subspace on which
it is unitary. Thus $S_\theta$ is cnu.

Since $\theta$ admits analytic continuation through the arc
$\Gamma$, the points of $\Gamma$ do not belong to the spectrum $\sigma(\theta)$
of the inner function $\theta$; see \cite[Proposition 7.20]{GMR}. Moreover, by the
Liv\v{s}ic--M\"oller theorem (see \cite[Theorem 9.22]{GMR}),
\[
        \sigma(S_\theta)=\sigma(\theta).
\] Hence
\[
        a\notin \sigma(S_\theta).
\]
Therefore
\[
        a\in \rho(S_\theta)\cap \mathbb T .
\]

Next, by the identities \eqref{id}, both
\[
        I-S_\theta^*S_\theta
        \quad \text{and} \quad
        I-S_\theta S_\theta^*
\]
are rank-one operators. In particular, they are compact. Therefore $S_\theta$ is
essentially unitary. Hence, by Remark \ref{Remark 4.6} and Lemma \ref{Lemma 4.4}, $T$ satisfies all the hypotheses of Theorem~\ref{Theorem M}.
\end{example}

\begin{example}[Vector valued model spaces with matrix valued inner functions]
Let $\mathcal E$ and $\mathcal E_*$ be finite dimensional Hilbert spaces, and let
\[
        \Theta:\mathbb D\longrightarrow  B(\mathcal E,\mathcal E_*)
\]
be a purely contractive matrix valued inner function. Assume that the corresponding vector valued model space
\[
        K_\Theta
        :=
        H^2(\mathcal E_*)\ominus \Theta H^2(\mathcal E)
\]
is infinite dimensional. Suppose further that there exists a non-empty open arc $\Gamma\subset \mathbb T$ such that $\Theta$ admits an analytic continuation to a neighbourhood of $\Gamma$, and
\[
        \Theta(\zeta):\mathcal E\longrightarrow \mathcal E_*
\]
is unitary for every $\zeta\in\Gamma$. Choose and fix a point $a\in\Gamma$.
Let
\[
        T=S_\Theta:=\mathbf{P}_{K_\Theta}M_z|_{K_\Theta},
\]
where $M_z$ denotes the unilateral shift on $H^2(\mathcal E_*)$ and $\mathbf{P}_{K_\Theta}$ denotes the orthogonal projection from $H^2(\mathcal E_*)$ onto $K_\Theta$. Then $S_\Theta$ is the compressed shift on the vector valued model space $K_\Theta$; see \cite[Chapter VI, Section 3]{Nagy-Foais}.

We claim that $T=S_\Theta$ satisfies all the hypotheses of Theorem \ref{Theorem M}. First, $T$ is a contraction, being the compression of the
unilateral shift. Moreover, $T$ is cnu. Indeed, since
$\Theta H^2(\mathcal E)$ is invariant under $M_z$, the space $K_\Theta$ is
invariant under $M_z^*$, and hence
\[
        S_\Theta^*=M_z^*|_{K_\Theta}.
\]
As $(M_z^*)^n\to 0$ strongly on $H^2(\mathcal E_*)$, it follows that
\[
        (S_\Theta^*)^n f\to 0
        \qquad (f\in K_\Theta).
\]
Thus $S_\Theta$ cannot have a non-zero reducing subspace on which it is
unitary. Hence $S_\Theta$ is a cnu contraction.

Since $\Theta$ is purely contractive, the Sz.-Nagy-Foias characteristic function of $S_\Theta$ coincides with $\Theta$; see \cite[Chapter VI, Section 3, Theorem 3.1]{Nagy-Foais}. By the assumption that
$\Theta$ admits analytic continuation through $\Gamma$ and is unitary on
$\Gamma$, \cite[Chapter VI, Section 4, Theorem 4.1]{Nagy-Foais} gives
\[
        a\in \rho(S_\Theta)\cap\mathbb T .
\]

It remains to verify the additional Fredholm-type hypotheses appearing in
Theorem \ref{Theorem M}. Since $\mathcal E$ and $\mathcal E_*$ are finite
dimensional, the defect spaces of the model operator $S_\Theta$ are finite
dimensional. Equivalently,
\[
        I-S_\Theta^*S_\Theta
        \quad\text{and}\quad
        I-S_\Theta S_\Theta^*
\]
are finite rank operators. In particular, they are compact. Hence $S_\Theta$ is essentially unitary. Therefore, by Remark \ref{Remark 4.6} and Lemma \ref{Lemma 4.4}, $T=S_\Theta$ satisfies the hypotheses $(i)$--$(iii)$ of Theorem \ref{Theorem M}. Consequently, $S_\Theta$ satisfies all the hypotheses of Theorem \ref{Theorem M}.
\end{example}

\begin{example}[Vector valued model spaces with operator valued inner functions]
Let \(\mathcal E\) and \(\mathcal E_*\) be infinite-dimensional separable Hilbert spaces, and let
\[
        \Theta:\mathbb D\longrightarrow  B(\mathcal E,\mathcal E_*)
\]
be a purely contractive operator valued inner function. Thus the multiplication operator
\[
        M_\Theta:H^2(\mathcal E)\longrightarrow H^2(\mathcal E_*),
        \qquad
        M_\Theta f=\Theta f,
\]
is an isometry.
Let
\[
        K_\Theta
        :=
        H^2(\mathcal E_*)\ominus \Theta H^2(\mathcal E)
\]
be the corresponding vector valued model space, and assume that \(K_\Theta\) is
infinite dimensional. Define
\[
        T=S_\Theta:=\mathbf{P}_{K_\Theta}M_z|_{K_\Theta},
\]
where \(M_z\) denotes the unilateral shift on \(H^2(\mathcal E_*)\). Then \(S_\Theta\)
is a contraction. As explained in the previous example, \(S_\Theta\) is cnu. Assume further that there exists a non-empty open arc
\(\Gamma\subset\mathbb T\) such that \(\Theta\) admits an analytic continuation to a
neighbourhood of \(\Gamma\), and
\[
        \Theta(\zeta):\mathcal E\to\mathcal E_*
\]
is unitary for every \(\zeta\in\Gamma\). By the same argument as in the previous
example, we obtain $\Gamma\subset \rho(S_\Theta)$.
In particular, $\rho(S_\Theta)\cap\mathbb T\neq\emptyset$.
We also impose the compact-defect assumptions
\[
        I_{\mathcal E}-\Theta(0)^*\Theta(0)\in  K(\mathcal E), \qquad
    I_{\mathcal E_*}-\Theta(0)\Theta(0)^*\in  K(\mathcal E_*).
\]
We claim that, under these assumptions, \(S_\Theta\) satisfies all the hypotheses of
Theorem~\ref{Theorem M}.

It remains to verify that \(S_\Theta\) is essentially unitary. Recall that this means
\[
        I-S_\Theta^*S_\Theta\in  K(K_\Theta),
        \qquad
        I-S_\Theta S_\Theta^*\in K(K_\Theta).
\]

We first consider \(I-S_\Theta S_\Theta^*\). Since \(K_\Theta\) is invariant under
\(M_z^*\), we have
\[
        S_\Theta^*=M_z^*|_{K_\Theta}.
\]
Therefore, for \(f\in K_\Theta\),
\[
        S_\Theta S_\Theta^*f
        =
        \mathbf{P}_{K_\Theta}M_zM_z^*f.
\]
On \(H^2(\mathcal E_*)\), we have
\[
        M_zM_z^*=I-\mathbf{P}_{\mathcal E_*},
\]
where \(\mathbf{P}_{\mathcal E_*}\) denotes the orthogonal projection onto the initial space. Hence
\[
        S_\Theta S_\Theta^*f
        =
        \mathbf{P}_{K_\Theta}(f-f(0)).
\]
Since \(f\in K_\Theta\), this gives
\[
        (I-S_\Theta S_\Theta^*)f
        =
        \mathbf{P}_{K_\Theta}f(0).
\]
Define
\[
        C:\mathcal E_*\to K_\Theta,\qquad Cx=\mathbf{P}_{K_\Theta}x,
\]
where \(x\in\mathcal E_*\) is regarded as a constant function in
\(H^2(\mathcal E_*)\). Then
\[
        C^*f=f(0),\qquad f\in K_\Theta.
\]
Consequently,
\[
        I-S_\Theta S_\Theta^*=CC^*.
\]

We now compute \(C^*C\). Since \(\Theta\) is inner, \(M_\Theta\) is an isometry.
Therefore
\[
        \mathbf{P}_{K_\Theta}=I-M_\Theta M_\Theta^*.
\]
For \(x\in\mathcal E_*\), regarded as a constant function, we have
\[
        M_\Theta^*x=\Theta(0)^*x.
\]
Hence
\[
        \mathbf{P}_{K_\Theta}x
        =
        x-\Theta(z)\Theta(0)^*x.
\]
Taking the value at \(0\), we obtain
\[
        C^*Cx
        =
        (\mathbf{P}_{K_\Theta}x)(0)
        =
        x-\Theta(0)\Theta(0)^*x.
\]
Thus
\[
        C^*C=I_{\mathcal E_*}-\Theta(0)\Theta(0)^*.
\]
By assumption,
\[
        I_{\mathcal E_*}-\Theta(0)\Theta(0)^*\in  K(\mathcal E_*).
\]
Hence \(C^*C\) is compact. Therefore \(C\) is compact, and so
\[
        I-S_\Theta S_\Theta^*=CC^*\in K(K_\Theta).
\]

Next we consider \(I-S_\Theta^*S_\Theta\). 
 Define
\[
        B:\mathcal E\longrightarrow K_\Theta
\]
by
\[
        Bu
        =
        M_z^*M_\Theta u.
\]
Equivalently,
\[
        Bu(z)
        =
        \frac{\Theta(z)-\Theta(0)}{z}u,
        \qquad u\in\mathcal E .
\]

We first verify that \(Bu\in K_\Theta\). Since \(M_\Theta u
=\Theta(z)u\in H^2(\mathcal E_*)\), we have \(Bu\in H^2(\mathcal E_*)\). Let
\(h\in H^2(\mathcal E)\). Then
\[
\begin{aligned}
        \langle Bu,\Theta h\rangle_{H^2(\mathcal E_*)}
        &=
        \langle M_z^*M_\Theta u,M_\Theta h\rangle_{H^2(\mathcal E_*)}  \\
        &=
        \langle M_\Theta u,M_zM_\Theta h\rangle_{H^2(\mathcal E_*)}  \\
        &=
        \langle M_\Theta u,M_\Theta M_z h\rangle_{H^2(\mathcal E_*)}  \\
        &=
        \langle u,M_z h\rangle_{H^2(\mathcal E)}.
\end{aligned}
\]
Since \(u\) is a constant function and \(M_z h=zh\) has zero
constant term, we get
\[
        \langle u,M_z h\rangle_{H^2(\mathcal E)}=0.
\]
Thus \(Bu\perp \Theta H^2(\mathcal E)\), and hence \(Bu\in K_\Theta\).

Now, for \(u,v\in\mathcal E\), using the identity
\[
        \langle M_z^*F,M_z^*G\rangle_{H^2(\mathcal E)}
        =
        \langle F,G\rangle_{H^2(\mathcal E_*)}-\langle F(0),G(0)\rangle_{\mathcal E_*},
        \qquad F,G\in H^2(\mathcal E_*),
\]
we obtain
\[
\begin{aligned}
        \langle Bu,Bv\rangle_{K_\Theta}
        &=
        \langle M_z^*M_\Theta u,
        M_z^*M_\Theta v\rangle_{H^2(\mathcal E_*)}      \\
        &=
        \langle M_\Theta u,
        M_\Theta v\rangle_{H^2(\mathcal E_*)}
        -
        \langle \Theta(0)u,\Theta(0)v\rangle_{\mathcal E_*}.
\end{aligned}
\]
Since \(\Theta\) is inner, \(M_\Theta\) is an isometry. Therefore
\[
        \langle M_\Theta u,
        M_\Theta v\rangle_{H^2(\mathcal E_*)}
        =
        \langle u,v\rangle_{\mathcal E}.
\]
Hence
\[
        \langle Bu,Bv\rangle_{K_\Theta}
        =
        \langle u,v\rangle_{\mathcal E}
        -
        \langle \Theta(0)u,\Theta(0)v\rangle_{\mathcal E_*}.
\]
Thus
\[
        B^*B
        =
        I_{\mathcal E}-\Theta(0)^*\Theta(0).
\]
By assumption,
\[
        I_{\mathcal E}-\Theta(0)^*\Theta(0)\in  K(\mathcal E).
\]
Hence \(B^*B\) is compact. Since compactness of \(B^*B\) implies compactness of
\(B\), it follows that \(B\) is compact.

It remains to identify \(I-S_\Theta^*S_\Theta\) with \(BB^*\). Since
\[
        P_{K_\Theta}=I-M_\Theta M_\Theta^*,
\]
we have, for \(f\in K_\Theta\),
\[
\begin{aligned}
        S_\Theta^*S_\Theta f
        &=
        M_z^*P_{K_\Theta}M_z f  \\
        &=
        M_z^*(I-M_\Theta M_\Theta^*)M_z f  \\
        &=
        f-M_z^*M_\Theta M_\Theta^*M_z f.
\end{aligned}
\]
Therefore
\[
        (I-S_\Theta^*S_\Theta)f
        =
        M_z^*M_\Theta M_\Theta^*M_z f.
\]

Now put
\[
        g_f:=M_\Theta^*M_z f\in H^2(\mathcal E).
\]
We claim that \(g_f\) is a constant function. Indeed, using the commutation relation
\[
        M_zM_\Theta=M_\Theta M_z,
\]
and taking adjoints, we get
\[
        M_z^*M_\Theta^*=M_\Theta^*M_z^*.
\]
Hence
\[
        M_z^*g_f
        =
        M_z^*M_\Theta^*M_z f
        =
        M_\Theta^*M_z^*M_z f
        =
        M_\Theta^*f
        =
        0,
\]
because \(f\in K_\Theta\). Therefore \(g_f\in\ker M_z^*\), and so \(g_f\) is a
constant function.

Moreover, this constant is \(B^*f\). Indeed, for \(u\in\mathcal E\),
\[
\begin{aligned}
        \langle Bu,f\rangle_{K_\Theta}
        &=
        \langle M_z^*M_\Theta u,f\rangle_{H^2(\mathcal E_*)}  \\
        &=
        \langle M_\Theta u,M_z f\rangle_{H^2(\mathcal E_*)}  \\
        &=
        \langle u,M_\Theta^*M_z f\rangle_{H^2(\mathcal E)}  \\
        &=
        \langle u,g_f\rangle_{\mathcal E}.
\end{aligned}
\]
Thus \(g_f=B^*f\). Consequently,
\[
\begin{aligned}
        (I-S_\Theta^*S_\Theta)f
        &=
        M_z^*M_\Theta M_\Theta^*M_z f  \\
        &=
        M_z^*M_\Theta B^*f  \\
        &=
        BB^*f.
\end{aligned}
\]
Therefore
\[
        I-S_\Theta^*S_\Theta=BB^*.
\]
Since \(B\) is compact, it follows that
\[
        I-S_\Theta^*S_\Theta\in  K(K_\Theta).
\]

Combining the two parts, we obtain
\[
        I-S_\Theta^*S_\Theta\in  K(K_\Theta),
        \qquad
        I-S_\Theta S_\Theta^*\in  K(K_\Theta).
\]
Hence \(S_\Theta\) is essentially unitary.
Therefore \(S_\Theta\) is a cnu contraction, has a point of its resolvent set on the unit circle, and is essentially unitary. Hence, by Lemma~\ref{Lemma 4.4} and Remark~\ref{Remark 4.6}, \(S_\Theta\) satisfies all the
hypotheses of Theorem~\ref{Theorem M}.
\end{example}

\begin{example}[Cyclic roots of scalar valued model operators]
Let \(\theta\) be an inner function such that the corresponding model space
\[
        K_\theta:=H^2\ominus \theta H^2
\]
is infinite dimensional. 
Suppose further that there exists a non-empty open arc
\(\Gamma\subset\mathbb T\) such that \(\theta\) admits an analytic continuation
through a neighbourhood of \(\Gamma\). Let
\[
        S_\theta=\mathbf{P}_{K_\theta}S|_{K_\theta}
\]
be the compressed shift on \(K_\theta\). As done in Example \ref{Example 4.9}, \(S_\theta\) is a
cnu contraction, is essentially unitary, and
\[
        \sigma(S_\theta)=\sigma(\theta).
\]
Moreover, since \(\theta\) admits analytic continuation through the arc \(\Gamma\),
the points of \(\Gamma\) do not belong to \(\sigma(\theta)\). Hence
\[
        \Gamma\subset \rho(S_\theta)\cap \mathbb T .
\]

For \(m\geq 2\), set
\[
        \mathcal H_m
        =
        \underbrace{K_\theta\oplus K_\theta\oplus\cdots\oplus K_\theta}_{m\ \text{copies}}
\]
and define \(R_m\in B(\mathcal H_m)\) by
\[
        R_m(f_1,f_2,\ldots,f_m)
        =
        (f_2,f_3,\ldots,f_m,S_\theta f_1).
\]
Equivalently,
\[
        R_m=
        \begin{pmatrix}
        0&I&0&\cdots&0\\
        0&0&I&\cdots&0\\
        \vdots&\vdots&\vdots&\ddots&\vdots\\
        0&0&0&\cdots&I\\
        S_\theta&0&0&\cdots&0
        \end{pmatrix}.
\]
Then
\[
        R_m^m
        =
        S_\theta\oplus S_\theta\oplus\cdots\oplus S_\theta .
\]

We claim that \(R_m\) satisfies all the hypotheses of Theorem~\ref{Theorem M}.
First, \(R_m\) is a contraction. Indeed, for
\(f=(f_1,f_2,\ldots,f_m)\in \mathcal H_m\), we have
\[
        \|R_m f\|^2
        =
        \sum_{j=2}^{m}\|f_j\|^2+\|S_\theta f_1\|^2
        \leq
        \sum_{j=1}^{m}\|f_j\|^2
        =
        \|f\|^2.
\]

Next, \(R_m\) is cnu. Suppose, on the contrary, that \(R_m\) has a non-zero reducing
subspace \(\mathcal M\subseteq \mathcal H_m\) such that \(R_m|_{\mathcal M}\) is
unitary. Then \(R_m^m|_{\mathcal M}\) is also unitary. However,
\[
        R_m^m
        =
        S_\theta\oplus S_\theta\oplus\cdots\oplus S_\theta,
\]
and the operator on the right-hand side is cnu, because \(S_\theta\) is cnu, as shown
in Example \ref{Example 4.9}. This gives a contradiction. Hence \(R_m\) is cnu.

We next show that \(R_m\) has a resolvent point on the unit circle. By the spectral mapping theorem,
\[
        \sigma(R_m)^m
        =
        \sigma(R_m^m)
        =
        \sigma(S_{\theta}).
\]
Equivalently,
\[
        \sigma(R_m)
        =
        \{\lambda\in\mathbb C:\lambda^m\in \sigma(S_{\theta})\}.
\]
Using \(\sigma(S_\theta)=\sigma(\theta)\), this becomes
\[
        \sigma(R_m)
        =
        \{\lambda\in\mathbb C:\lambda^m\in\sigma(\theta)\}.
\]
Choose \(b\in\Gamma\). Since \(b\notin\sigma(\theta)\), choose \(a\in\mathbb T\)
such that
\[
        a^m=b.
\]
Then
\[
        a^m=b\notin\sigma(\theta)=\sigma(S_\theta).
\]
Therefore
\[
        a\notin\sigma(R_m).
\]
Hence
\[
        a\in\rho(R_m)\cap\mathbb T .
\]

Finally, we verify the essential unitarity of \(R_m\). A direct computation gives
\[
        R_m^*R_m
        =
        \operatorname{diag}(S_\theta^*S_\theta,I,\ldots,I)
\]
and
\[
        R_mR_m^*
        =
        \operatorname{diag}(I,\ldots,I,S_\theta S_\theta^*).
\]
Therefore
\[
        I-R_m^*R_m
        =
        \operatorname{diag}(I-S_\theta^*S_\theta,0,\ldots,0),
\]
and
\[
        I-R_mR_m^*
        =
        \operatorname{diag}(0,\ldots,0,I-S_\theta S_\theta^*).
\]
As done in Example \ref{Example 4.9}, both
\[
        I-S_\theta^*S_\theta
        \quad\text{and}\quad
        I-S_\theta S_\theta^*
\]
are compact. Hence
\[
        I-R_m^*R_m\in  K(\mathcal H_m),
        \qquad
        I-R_mR_m^*\in  K(\mathcal H_m).
\]
Thus \(R_m\) is essentially unitary. Consequently, by Lemma~\ref{Lemma 4.4} and
Remark~\ref{Remark 4.6}, \(R_m\) satisfies all the hypotheses of
Theorem~\ref{Theorem M}.
\end{example}

Now, in order to prove Theorem \ref{Theorem M}, we will use the following results.

The following theorem generalizes a characterization given by de Branges for spaces of scalar valued functions (see \cite{brange}) to vector valued functions based on matrix valued RK and operator valued RK that are presented in \cite{sarkar, DymJFA} and \cite{mahapatra2} respectively. 

\begin{thm} \label{Theorem C}
Let $\mathbf{H}$ be a non zero RKHS of $Y$-valued analytic functions defined on a domain $\Omega \subseteq \mathbb{C}$ which is symmetric with respect to the unit circle and contains the open unit disc. Let $K_w(z)$ denote the RK of $\mathbf{H}$ defined on $\Omega \times \Omega$. Suppose there exists a non zero $\beta \in \mathbb{D}$ such that $$K_{\beta}(z), ~ K_{\frac{1}{\bar{\beta}}}(z) ~\mbox{are Fredholm operators for all}~ z \in \Omega,$$
 and $$K_{\beta}(\beta), ~ K_{\frac{1}{\bar{\beta}}}(\tfrac{1}{\bar{\beta}}) ~\mbox{are invertible }.$$
 Let $\mathbf{H}_{\alpha} := \{ f \in \mathbf{H}: f(\alpha)=0\}$ for each point $\alpha \in \Omega$. Then the RKHS is same as a de Branges space $\mathcal{B}(\mathfrak{E})$, based on a de Branges operator $\mathfrak{E}(z)= (E_-(z), E_+(z))$ with $$K_{w}(z)= \frac{E_+(z)E_+(w)^*-E_-(z)E_-(w)^*}{\rho_w(z)}  \quad{for}~ z, w \in \Omega, z\bar{w} \neq 1,$$
 if and only if 
\begin{itemize}
\item[(1)] $R_{\beta}\mathbf{H}_{\beta} \subseteq \mathbf{H}$, $R_{\frac{1}{\bar{\beta}}}\mathbf{H}_{\frac{1}{\bar{\beta}}} \subseteq \mathbf{H}$
\item[(2)] The linear transformation $$S_{\beta}= -\bar{\beta}I+(1- \vert \beta \vert ^2)R_{\beta}: \mathbf{H}_{\beta} \rightarrow \mathbf{H}_{\frac{1}{\bar{\beta}}}$$ is an isometric isomorphism.
\end{itemize}
Moreover, in this case, the operator valued functions $E_+(z)$ and $E_-(z)$ may be specified by the formulas:
$$E_{+}(z)
=
\rho_{\beta}(z)\,K_{\beta}(z)
\big(\rho_{\beta}(\beta)\,K_{\beta}(\beta)\big)^{-1/2}$$ and $$E_{-}(z)
=
-\rho_{1/\overline{\beta}}(z)\,
K_{1/\overline{\beta}}(z)\,
\left(
-\rho_{1/\overline{\beta}}(1/\overline{\beta})\,
K_{1/\overline{\beta}}(1/\overline{\beta})
\right)^{-1/2}.$$ 
\end{thm}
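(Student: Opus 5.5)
The starting observation is that $S_{\beta}$ is really a multiplication operator. For $f\in\mathbf{H}_\beta$ we have $(R_\beta f)(z)=f(z)/(z-\beta)$, and hence
\[
(S_\beta f)(z)=\Big(-\bar\beta+\frac{1-|\beta|^2}{z-\beta}\Big)f(z)=\frac{1-\bar\beta z}{z-\beta}\,f(z)=b_\beta(z)^{-1}f(z),
\qquad b_\beta(z):=\frac{z-\beta}{1-\bar\beta z}.
\]
Note also that $R_\beta f=(1-|\beta|^2)^{-1}(S_\beta f+\bar\beta f)$. Therefore, once $S_\beta f\in\mathbf H$ is known, condition (1) for $\beta$ is automatic, and symmetrically for $1/\bar\beta$. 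The whole proof reduces to comparing the reproducing kernels of the subspaces $\mathbf H_\beta$ and $\mathbf H_{1/\bar\beta}$.

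\textbf{Kernels of the subspaces.} Since $K_\beta(\beta)$ is invertible, $\mathbf H_\beta$ is a closed subspace of $\mathbf H$ with reproducing kernel
\[
K^{(\beta)}_w(z)=K_w(z)-K_\beta(z)K_\beta(\beta)^{-1}K_w(\beta).
\]
The analogous formula holds for $\mathbf H_{1/\bar\beta}$. A multiplication by $b_\beta^{-1}$ is an isometric isomorphism of $\mathbf H_\beta$ onto $\mathbf H_{1/\bar\beta}$ exactly when
\begin{equation*}
K^{(1/\bar\beta)}_w(z)=b_\beta(z)^{-1}\,K^{(\beta)}_w(z)\,\overline{b_\beta(w)}^{\,-1},\qquad z,w\in\Omega\setminus\{\beta,1/\bar\beta\}.
\end{equation*}
Here I would use the standard RKHS fact that $M_\varphi:\mathcal H(K_1)\to\mathcal H(K_2)$ is unitary if and only if $K_2=\varphi K_1\overline\varphi$. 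I would then rearrange this identity using
\[
1-b_\beta(z)\overline{b_\beta(w)}=\frac{(1-|\beta|^2)\rho_w(z)}{\rho_\beta(z)\overline{\rho_\beta(w)}}.
\]
Multiplying through by $b_\beta(z)\overline{b_\beta(w)}$ and collecting the $K_w(z)$ terms gives
\[
\rho_w(z)K_w(z)=\frac{\rho_\beta(z)K_\beta(z)K_\beta(\beta)^{-1}K_w(\beta)\overline{\rho_\beta(w)}}{1-|\beta|^2}-(\text{the analogous }1/\bar\beta\text{ term}).
\]
Expressing $K_w(\beta)=K_\beta(w)^*$ then yields exactly $E_+(z)E_+(w)^*-E_-(z)E_-(w)^*$ with the stated formulas. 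Here $\rho_\beta(\beta)=1-|\beta|^2>0$, and $\rho_{1/\bar\beta}(1/\bar\beta)<0$, which explains the minus sign inside the second square root. One has to check that $-\rho_{1/\bar\beta}(1/\bar\beta)K_{1/\bar\beta}(1/\bar\beta)$ is positive definite; this follows because $K_{1/\bar\beta}(1/\bar\beta)\succeq0$ is invertible and $\rho_{1/\bar\beta}(1/\bar\beta)<0$.

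\textbf{The two directions.} For sufficiency, I would run the computation above forward. Conditions (1)--(2) give the kernel identity, and the identity gives the factorization of $\rho_w(z)K_w(z)$. For necessity, I would start from the de Branges form of $K$, compute $K^{(\beta)}$ and $K^{(1/\bar\beta)}$ from it, and verify the displayed multiplier identity by the same algebra run backwards. That identity then gives (2), and, by the remark above, (1).

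It remains in the sufficiency part to verify that $\mathfrak E=(E_-,E_+)$ is a de Branges operator. Fredholmness of $E_\pm(z)$ follows from the hypothesis on $K_\beta(z)$ and $K_{1/\bar\beta}(z)$, since these are multiplied by nonzero scalars and invertible constants (away from the zeros of $\rho$, which are handled by continuity/analyticity). Invertibility at $\beta$ and at $1/\bar\beta$ comes from the invertibility of $K_\beta(\beta)$ and $K_{1/\bar\beta}(1/\bar\beta)$.

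\textbf{Main obstacle: condition (3).} Where $E_+$ is invertible, positivity of $K$ on $\mathbb D$ gives $E_+(z)K^\Theta_w(z)E_+(w)^*\succeq0$, so $\Theta=E_+^{-1}E_-$ is contractive at such points. The removable-singularity argument of Section 2 then extends contractivity to all of $\mathbb D$. On $\Omega\cap\mathbb T$ we have $\rho_z(z)=0$, while $K_z(z)$ is finite by analyticity; hence $E_+(z)E_+(z)^*=E_-(z)E_-(z)^*$. This gives $\Theta(z)\Theta(z)^*=I$ wherever $E_+(z)$ is invertible. To obtain $\Theta^*\Theta=I$ as well, I would use the symmetry $K_w(z)=K_z(w)^*$ together with the Fredholm index: $\Theta(z)$ is a co-isometry, and it is Fredholm wherever both $E_\pm(z)$ are invertible, so it is unitary there. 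Continuity then extends this to the remaining discrete points.
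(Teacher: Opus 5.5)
Your main computation is sound, and it is a genuinely self-contained route: the paper gives no argument of its own, only saying that the entire-function proof in Mahapatra--Sarkar adapts. Each of your ingredients checks out:
\begin{itemize}
\item on $\mathbf H_\beta$, the operator $S_\beta$ is multiplication by $b_\beta^{-1}$;
\item the kernels of $\mathbf H_\beta$ and $\mathbf H_{1/\bar\beta}$ are as you write;
\item the multiplier identity is equivalent to $\rho_w(z)K_w(z)=E_+(z)E_+(w)^*-E_-(z)E_-(w)^*$ with exactly the displayed $E_\pm$. The constants work out via $(z-\beta)(\bar w-\bar\beta)=|\beta|^2\rho_{1/\bar\beta}(z)\overline{\rho_{1/\bar\beta}(w)}$ and $-\rho_{1/\bar\beta}(1/\bar\beta)=(1-|\beta|^2)/|\beta|^2$.
\end{itemize}

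\textbf{The Fredholm step fails.} The problem is your verification of condition (1) in the definition of a de Branges operator. The set of Fredholm operators is open, not closed, so the zeros of $\rho$ cannot be ``handled by continuity/analyticity''. In fact the displayed formulas force
\[
E_+(1/\bar\beta)=\rho_\beta(1/\bar\beta)\,K_\beta(1/\bar\beta)\bigl(\rho_\beta(\beta)K_\beta(\beta)\bigr)^{-1/2}=0,\qquad E_-(\beta)=0,
\]
since $\rho_\beta(1/\bar\beta)=\rho_{1/\bar\beta}(\beta)=0$. When $\dim Y=\infty$, which is the case relevant to the paper, the zero operator is not Fredholm. So this pair violates condition (1) as literally stated, and no argument can show otherwise.

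A minimal example is the constant kernel $K_w(z)=(1-|\beta|^2)I_Y$. It satisfies every hypothesis together with (1) and (2), since $\mathbf H_\beta=\mathbf H_{1/\bar\beta}=\{0\}$. Your formulas return $E_+(z)=(1-\bar\beta z)I$ and $E_-(z)=\tfrac{|\beta|}{\beta}(z-\beta)I$. The paper's deferred proof does not address this either.

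What you can honestly prove is that $E_+$ is Fredholm on $\Omega\setminus\{1/\bar\beta\}$ and $E_-$ is Fredholm on $\Omega\setminus\{\beta\}$. These sets are connected. They contain $\beta$, respectively $1/\bar\beta$, where $E_+$, respectively $E_-$, is invertible. They also contain $\mathbb D\cup(\Omega\cap\mathbb T)$, respectively $\Omega\cap\mathbb T$. So the analytic Fredholm theorem applied on them gives everything your treatment of condition (3) uses. You should state explicitly that the Fredholm requirement holds only off $\{\beta,1/\bar\beta\}$, so the definition must be relaxed accordingly.

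\textbf{The necessity direction needs more than ``the same algebra run backwards''.} Your forward computation shows that the multiplier identity is equivalent to $\rho_wK_w$ being factored by the particular pair built from $K_\beta$ and $K_{1/\bar\beta}$. If $\mathbf H=\mathcal B(\mathfrak E)$ for some other de Branges operator, you must show that this particular factorization also holds.

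Put $\mathcal E(z)=\begin{bmatrix}E_+(z)&E_-(z)\end{bmatrix}$, $J=\operatorname{diag}(I,-I)$, and
\[
Q_\alpha=J\mathcal E(\alpha)^*\bigl(\mathcal E(\alpha)J\mathcal E(\alpha)^*\bigr)^{-1}\mathcal E(\alpha)J .
\]
Substituting $\rho_wK_w=\mathcal E(z)J\mathcal E(w)^*$ into your formulas shows that the particular factorization equals $\mathcal E(z)(Q_\beta+Q_{1/\bar\beta})\mathcal E(w)^*$. So what is needed is $Q_\beta+Q_{1/\bar\beta}=J$. This requires two inputs:
\begin{itemize}
\item $E_+(\beta)$ and $E_-(1/\bar\beta)$ are invertible. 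This follows from index zero together with the invertibility of $K_\beta(\beta)$ and $K_{1/\bar\beta}(1/\bar\beta)$. It gives $\mathcal E(\beta)=E_+(\beta)\begin{bmatrix}I&t\end{bmatrix}$ with $t=E_+(\beta)^{-1}E_-(\beta)$ and $I-tt^*$ invertible.
\item The reflection relation $E_+(\beta)E_+(1/\bar\beta)^*=E_-(\beta)E_-(1/\bar\beta)^*$, which holds because the kernel numerator vanishes at $z\bar w=1$. It yields $\mathcal E(1/\bar\beta)=E_-(1/\bar\beta)\begin{bmatrix}t^*&I\end{bmatrix}$.
\end{itemize}
A short block computation then gives $Q_\beta+Q_{1/\bar\beta}=J$. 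With the Fredholm requirement weakened as above and this step supplied, your argument proves the theorem.
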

\begin{proof}
The result follows from the fact that the proof of Theorem $3.1$ in \cite{mahapatra2}, originally established for spaces of entire vector valued functions, can be adapted easily to the spaces of vector valued functions that are holomorphic on the domain $\Omega$. Since, the proof technique is similar, we omit the proof details. For the matrix valued reproducing kernel setting, the analogous result may be found in \cite[Theorem 5.2]{DymJFA}.
\end{proof}

The following theorem characterizes when the multiplication operator $\mathfrak{T}$ is an isometry on its domain.  
\begin{thm}
Let $\mathbf{H}$ be a non zero RKHS of $Y$-valued analytic functions defined on a domain $\Omega \subseteq \mathbb{C}$ which is symmetric with respect to the unit circle and contains the open unit disc. Suppose that multiplication operator $\mathfrak{T}$ has domain $\mathcal{D}(\mathfrak{T})$ in $\mathbf{H}$ and $R_{\beta}\mathbf{H}_{\beta} \subseteq \mathbf{H}$ for some $\beta \in \Omega$, $\beta \notin \mathbb{T}$. Then $\mathfrak{T}$ is an isometric operator on $\mathcal{D}(\mathfrak{T})$ if and only if $$ \Vert (I-\bar{\beta}\mathfrak{T})R_{\beta}f \Vert_{\mathbf{H}}= \Vert f\Vert_{\mathbf{H}}$$  
for all $f \in \mathbf{H}_{\beta}$.
\end{thm}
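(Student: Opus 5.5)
We first make precise what ``$\mathfrak{T}$ is an isometric operator on $\mathcal{D}(\mathfrak{T})$'' means. Here $\mathcal{D}(\mathfrak{T})=\{f\in\mathbf{H}: zf(z)\in\mathbf{H}\}$, and the claim is that $\Vert \mathfrak{T}f\Vert_{\mathbf{H}}=\Vert f\Vert_{\mathbf{H}}$ for all $f\in\mathcal{D}(\mathfrak{T})$. The whole argument rests on one correspondence: the map $f\mapsto g:=R_\beta f$ is a bijection from $\mathbf{H}_\beta$ onto $\mathcal{D}(\mathfrak{T})$, and under it
\[
(I-\bar\beta\mathfrak{T})g=\ \text{the function}\ \ \xi\mapsto (1-\bar\beta\xi)g(\xi),
\qquad
f=(\mathfrak{T}-\beta I)g .
\]

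Let us set this up. For $f\in\mathbf{H}_\beta$ we have $f(\beta)=0$, so $(R_\beta f)(\xi)=f(\xi)/(\xi-\beta)$. By hypothesis $g=R_\beta f$ lies in $\mathbf{H}$, and $\xi g(\xi)=f(\xi)+\beta g(\xi)\in\mathbf{H}$. Hence $g\in\mathcal{D}(\mathfrak{T})$ and $f=(\mathfrak{T}-\beta I)g$. Conversely, if $g\in\mathcal{D}(\mathfrak{T})$, then $f:=(\mathfrak{T}-\beta I)g$ belongs to $\mathbf{H}$, vanishes at $\beta$, and satisfies $R_\beta f=g$, since the analytic function $g$ is recovered from $f(\xi)/(\xi-\beta)$ away from $\beta$ and, by continuity, at $\beta$. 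This gives the bijection. The condition in the theorem therefore reads
\[
\Vert (I-\bar\beta\mathfrak{T})g\Vert^2=\Vert(\mathfrak{T}-\beta I)g\Vert^2 \qquad \text{for all } g\in\mathcal{D}(\mathfrak{T}). \tag{$\ast$}
\]

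Next we expand both sides of $(\ast)$ and subtract:
\[
\Vert g-\bar\beta\mathfrak{T}g\Vert^2-\Vert \mathfrak{T}g-\beta g\Vert^2
=(1-|\beta|^2)\bigl(\Vert g\Vert^2-\Vert\mathfrak{T}g\Vert^2\bigr).
\]
The cross terms cancel exactly, because $\bar\beta\langle \mathfrak{T}g,g\rangle$ and its conjugate appear identically in both expansions. Since $\beta\notin\mathbb{T}$, the factor $1-|\beta|^2$ is nonzero. So $(\ast)$ holds for all $g\in\mathcal{D}(\mathfrak{T})$ if and only if $\Vert\mathfrak{T}g\Vert=\Vert g\Vert$ for all $g\in\mathcal{D}(\mathfrak{T})$, that is, if and only if $\mathfrak{T}$ is isometric on its domain. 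This proves both directions at once.

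The main point needing care is the bijection step, specifically surjectivity of $R_\beta$ onto $\mathcal{D}(\mathfrak{T})$. This requires only that $\mathcal{D}(\mathfrak{T})$ is defined as the maximal domain and that the hypothesis $R_\beta\mathbf{H}_\beta\subseteq\mathbf{H}$ holds. If instead $\mathcal{D}(\mathfrak{T})$ were given abstractly, for instance as the image under $\Psi$ of $(\ker V)^\perp$, we would first identify it with $\{f: zf\in\mathbf{H}\}$ using the relation $P_Y(z)V_0f=zP_Y(z)f$.
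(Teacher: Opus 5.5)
Your proof is correct and follows the same route as the paper. Both arguments identify $R_\beta\mathbf{H}_\beta$ with $\mathcal{D}(\mathfrak{T})$, use $(\mathfrak{T}-\beta I)R_\beta f=f$, and reduce the condition to $\|\mathfrak{T}g\|=\|g\|$ on $\mathcal{D}(\mathfrak{T})$. The only difference is in the algebra: your identity $\|(I-\bar\beta\mathfrak{T})g\|^2-\|(\mathfrak{T}-\beta I)g\|^2=(1-|\beta|^2)(\|g\|^2-\|\mathfrak{T}g\|^2)$ replaces the paper's lengthy four-term expansion, which divides by $\beta$, so your version is shorter and also covers $\beta=0$.
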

\begin{proof}
Let $f,g \in \mathbf{H}_{\beta}$. Then we have
\begin{eqnarray}
&&\langle (I- \overline{\beta}\mathfrak{T})R_{\beta}f,
(I- \overline{\beta}\mathfrak{T})R_{\beta}g \rangle 
= \langle f,g \rangle \nonumber
\\[2mm] \nonumber
&\Leftrightarrow &
|\beta|^2 
\Big\langle 
\Big(
\frac{\mathfrak{T}-\beta I}{\beta}
- \mathfrak{T}\Big(1+\frac{1}{\beta}\Big)
+ I\Big(1+\frac{1}{\overline{\beta}}\Big)
\Big)R_{\beta}f,
\\
&&\qquad
\Big(
\frac{\mathfrak{T}-\beta I}{\beta}
- \mathfrak{T}\Big(1+\frac{1}{\beta}\Big)
+ I\Big(1+\frac{1}{\overline{\beta}}\Big)
\Big)R_{\beta}g
\Big\rangle
= \langle f,g \rangle \nonumber
\\[2mm]
&\Leftrightarrow &
\langle (\mathfrak{T}- \beta I)R_{\beta}f,
(\mathfrak{T}-\beta I)R_{\beta}g  \rangle  \nonumber
\\
&&+ |\beta|^2
\Big\langle 
\Big(\frac{\mathfrak{T}-\beta I}{\beta}\Big)R_{\beta}f,
\Big(
- \mathfrak{T}\Big(1+\frac{1}{\beta}\Big)
+ I\Big(1+\frac{1}{\overline{\beta}}\Big)
\Big)R_{\beta}g 
\Big\rangle \nonumber
\\
&&+ |\beta|^2
\Big\langle 
\Big(
- \mathfrak{T}\Big(1+\frac{1}{\beta}\Big)
+ I\Big(1+\frac{1}{\overline{\beta}}\Big)
\Big)R_{\beta}f, \nonumber
\\
&&\qquad
\Big(
- \mathfrak{T}\Big(1+\frac{1}{\beta}\Big)
+ I\Big(1+\frac{1}{\overline{\beta}}\Big)
\Big)R_{\beta}g
\Big\rangle  \label{M}
\\
&&+ |\beta|^2 
\Big\langle 
\Big(
- \mathfrak{T}\Big(1+\frac{1}{\beta}\Big)
+ I\Big(1+\frac{1}{\overline{\beta}}\Big)
\Big)R_{\beta}f,
\Big(\frac{\mathfrak{T}-\beta I}{\beta}\Big)R_{\beta}g
\Big\rangle
= \langle f,g \rangle  \nonumber .
\end{eqnarray}
Observe that
\begin{equation}
(\mathfrak{T}-\beta I)R_{\beta}f
= f. \label{1}
\end{equation}
Indeed, pointwise we have $$((\mathfrak{T}-\beta I)R_{\beta}f)(z)
=
z(R_{\beta}f)(z)- \beta(R_{\beta}f)(z)
=
f(z).$$
Furthermore,
\begin{equation}\label{3}
\begin{aligned}
&\Big\langle 
\Big(\frac{\mathfrak{T}-\beta I}{\beta}\Big)R_{\beta}f,
\Big(
- \mathfrak{T}\Big(1+\frac{1}{\beta}\Big)
+ I\Big(1+\frac{1}{\overline{\beta}}\Big)
\Big)R_{\beta}g 
\Big\rangle
\\
&=
-\frac{1}{\beta}\Big(1+\frac{1}{\overline{\beta}}\Big)
\langle \mathfrak{T}R_{\beta}f, \mathfrak{T}R_{\beta}g \rangle
\\
&\quad
+\Big(1+\frac{1}{\overline{\beta}}\Big)
\langle R_{\beta}f, \mathfrak{T}R_{\beta}g \rangle
\\
&\quad
+\frac{1}{\beta}\Big(1+\frac{1}{\beta}\Big)
\langle \mathfrak{T}R_{\beta}f,R_{\beta}g \rangle
\\
&\quad
-\Big(1+\frac{1}{\beta}\Big)
\langle R_{\beta}f, R_{\beta}g \rangle .
\end{aligned}
\end{equation}

Similarly,
\begin{equation}\label{4}
\begin{aligned}
&\Big\langle 
\Big(
- \mathfrak{T}\Big(1+\frac{1}{\beta}\Big)
+ I\Big(1+\frac{1}{\overline{\beta}}\Big)
\Big)R_{\beta}f,
\\
&\qquad
\Big(
- \mathfrak{T}\Big(1+\frac{1}{\beta}\Big)
+ I\Big(1+\frac{1}{\overline{\beta}}\Big)
\Big)R_{\beta}g
\Big\rangle
\\
&=
\left|1+\frac{1}{\beta}\right|^2 
\langle \mathfrak{T}R_{\beta}f, \mathfrak{T}R_{\beta}g \rangle
\\
&\quad
-(1+\tfrac{1}{\beta})^2 
\langle \mathfrak{T}R_{\beta}f, R_{\beta}g \rangle
\\
&\quad
-(1+\tfrac{1}{\overline{\beta}})^2 
\langle R_{\beta}f, \mathfrak{T}R_{\beta}g \rangle
\\
&\quad
+\left|1+\frac{1}{\beta}\right|^2 
\langle R_{\beta}f, R_{\beta}g \rangle .
\end{aligned}
\end{equation}

Also,
\begin{equation}\label{5}
\begin{aligned}
&\Big\langle 
\Big(
- \mathfrak{T}\Big(1+\frac{1}{\beta}\Big)
+ I\Big(1+\frac{1}{\overline{\beta}}\Big)
\Big)R_{\beta}f,
\Big(\frac{\mathfrak{T}-\beta I}{\beta}\Big)R_{\beta}g
\Big\rangle
\\
&=
-\frac{1}{\overline{\beta}}(1+\tfrac{1}{\beta})
\langle \mathfrak{T}R_{\beta}f, \mathfrak{T}R_{\beta}g \rangle
\\
&\quad
+(1+\tfrac{1}{\beta})
\langle \mathfrak{T}R_{\beta}f, R_{\beta}g \rangle
\\
&\quad
+\frac{1}{\overline{\beta}}
(1+\tfrac{1}{\overline{\beta}})
\langle R_{\beta}f, \mathfrak{T}R_{\beta}g \rangle
\\
&\quad
-(1+\tfrac{1}{\overline{\beta}})
\langle R_{\beta}f, R_{\beta}g \rangle .
\end{aligned}
\end{equation}
Substituting \eqref{1}--\eqref{5} into \eqref{M}, we obtain
\[
\langle (I- \overline{\beta}\mathfrak{T})R_{\beta}f,
(I- \overline{\beta}\mathfrak{T})R_{\beta}g \rangle 
= \langle f,g \rangle 
\iff 
\langle \mathfrak{T}R_{\beta}f, \mathfrak{T}R_{\beta}g \rangle
=
\langle R_{\beta}f, R_{\beta}g \rangle .
\]
Finally, we show that $$\mathcal{D}(\mathfrak{T})= R_{\beta}\mathbf{H}_{\beta} \iff R_{\beta}\mathbf{H}_{\beta} \subseteq \mathbf{H}.$$
The forward implication is immediate. For the converse, let 
$f \in \mathbf{H}_{\beta}$. Then
\begin{align*}
(R_{\beta}f)(z)
&= \frac{f(z)}{z-\beta},
\\
\Rightarrow\;
z(R_{\beta}f)(z)
&=
f(z)+\beta (R_{\beta}f)(z),
\\
\Rightarrow\;
R_{\beta}f 
&\in \mathcal{D}(\mathfrak{T}).
\end{align*}
Thus,
\[
R_{\beta}\mathbf{H}_{\beta} 
\subseteq  
\mathcal{D}(\mathfrak{T}).
\]
Now, to prove $\mathcal{D}(\mathfrak{T}) \subseteq R_{\beta}\mathbf{H}_{\beta}$, let $f \in \mathcal{D}(\mathfrak{T})$. Then
\[
R_{\beta}(\mathfrak{T}-\beta I)f= f
\]
and
\[
(\mathfrak{T}- \beta I)f(\beta)=0.
\]
This completes the proof.
\end{proof}

The following is an easy corollary of the above theorem that will be used in the proof of Theorem \ref{Theorem M}.

\begin{cor}\label{cor} Let $\mathbf{H}$ be a non zero RKHS of $Y$-valued analytic functions defined on a domain $\Omega \subseteq \mathbb{C}$ which is symmetric with respect to the unit circle and contains the open unit disc. Suppose that multiplication operator $\mathfrak{T}$ has domain $\mathcal{D}(\mathfrak{T})$ in $\mathbf{H}$ and $R_{\beta}\mathbf{H}_{\beta} \subseteq \mathbf{H}$, $R_{\frac{1}{\bar{\beta}}}\mathbf{H}_{\frac{1}{\bar{\beta}}} \subseteq \mathbf{H}$ for some $\beta \in \mathbb{D}$. Then $\mathfrak{T}$ is an isometric operator on $\mathcal{D}(\mathfrak{T})$ if and only if the operator $(I-\bar{\beta}\mathfrak{T})R_{\beta}$ maps $\mathbf{H}_{\beta}$ isometrically onto $\mathbf{H}_{\frac{1}{\bar{\beta}}}$.
\end{cor}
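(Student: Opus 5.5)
The corollary follows by combining the preceding theorem with a surjectivity argument. Assume $R_{\beta}\mathbf{H}_{\beta}\subseteq\mathbf{H}$ and $R_{1/\bar\beta}\mathbf{H}_{1/\bar\beta}\subseteq\mathbf{H}$ with $\beta\in\mathbb D$. The proof of the preceding theorem gives $\mathcal D(\mathfrak T)=R_\beta\mathbf{H}_\beta$ and $\mathcal D(\mathfrak T)=R_{1/\bar\beta}\mathbf{H}_{1/\bar\beta}$. The first step is to check that $(I-\bar\beta\mathfrak T)R_\beta$ maps $\mathbf{H}_\beta$ into $\mathbf{H}_{1/\bar\beta}$. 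For $f\in\mathbf{H}_\beta$ we have $R_\beta f\in\mathcal D(\mathfrak T)$, so $g:=(I-\bar\beta\mathfrak T)R_\beta f\in\mathbf{H}$. Pointwise, $g(z)=(1-\bar\beta z)\,f(z)/(z-\beta)$, and this vanishes at $z=1/\bar\beta$. Hence $g\in\mathbf{H}_{1/\bar\beta}$.

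For the direction ($\Leftarrow$), suppose $(I-\bar\beta\mathfrak T)R_\beta$ maps $\mathbf{H}_\beta$ isometrically onto $\mathbf{H}_{1/\bar\beta}$. In particular $\|(I-\bar\beta\mathfrak T)R_\beta f\|=\|f\|$ for all $f\in\mathbf{H}_\beta$. The preceding theorem, applied with this $\beta\notin\mathbb T$, then yields that $\mathfrak T$ is isometric on $\mathcal D(\mathfrak T)$.

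For the direction ($\Rightarrow$), suppose $\mathfrak T$ is isometric. The preceding theorem gives the isometry property on $\mathbf{H}_\beta$, so only surjectivity remains. Given $h\in\mathbf{H}_{1/\bar\beta}$, set $u=R_{1/\bar\beta}h\in\mathcal D(\mathfrak T)$, using the second inclusion hypothesis. Then put
\[
f:=(\mathfrak T-\beta I)u=\mathfrak T u-\beta u\in\mathbf{H}.
\]
We have $f(\beta)=0$, so $f\in\mathbf{H}_\beta$. Moreover $R_\beta f=u$, since $u$ is analytic and $f(z)=(z-\beta)u(z)$. Therefore
\[
(I-\bar\beta\mathfrak T)R_\beta f=(1-\bar\beta z)\,u(z)=-\bar\beta\,(z-1/\bar\beta)\,(R_{1/\bar\beta}h)(z)=-\bar\beta\,h(z).
\]
Thus $-\bar\beta h$ lies in the range, and since $\beta\neq 0$, so does $h$.

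The main obstacle is the case $\beta=0$. In that case $1/\bar\beta$ is not defined and the scaling by $-\bar\beta$ breaks down, so I would read the statement as implicitly requiring $\beta\neq 0$, as in Theorem~\ref{Theorem C}, and point this out. A second point to check is the identification $\mathcal D(\mathfrak T)=R_\alpha\mathbf{H}_\alpha$ for both $\alpha=\beta$ and $\alpha=1/\bar\beta$. The domain argument in the proof of the preceding theorem is purely pointwise, so it transfers verbatim to $\alpha=1/\bar\beta\in\Omega$.
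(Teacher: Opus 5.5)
Your proposal is correct and follows the route the paper intends. The paper gives no written proof and calls this an easy consequence of the preceding theorem; you supply the omitted details, taking the isometry equivalence from that theorem and checking directly that $(I-\bar\beta\mathfrak T)R_\beta$ maps $\mathbf{H}_\beta$ into and onto $\mathbf{H}_{1/\bar\beta}$ via the two inclusion hypotheses, and your caveat that $\beta\neq 0$ must be tacitly assumed (as in Theorem~\ref{Theorem C}) is well taken.
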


For the contraction operators $V_0$ and $V$, defined in \eqref{V0} and \eqref{V}, respectively, consider the transformation
\begin{equation}
U_{zw}= I+(z-w)(\widetilde{V}-zI)^{-1},
\end{equation}
where $\widetilde{V}$ is a unitary extension of the operator $V$. This transformation is analogous to the generalized Cayley transform for symmetric operators; see, for instance, \cite[Chapter 1, Section 2]{kreinlect}.

The following lemma records some basic properties of the transformation $U_{zw}$ that will be used in the sequel.
\begin{lemma}\label{Lemma Cayley} For all $z,w \in \Omega_a \setminus \mathbb{T}$ with 
$z \neq 0$ and $w \neq 0$, the following statements hold:
\begin{itemize}
\item[(1)]$U_{zw}^{-1}= U_{wz}$.
\item[(2)] $U_{zw}$ is a one-to-one and onto map from $M_z$ onto $M_w$.
\item[(3)] $U_{zw}$ is a one-to-one and onto map from 
$M_{\frac{1}{\overline{w}}}^{\perp}$ onto $M_{\frac{1}{\overline{z}}}^{\perp}$.
\end{itemize}
\end{lemma}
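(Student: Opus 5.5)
We need to prove Lemma Cayley: with $\widetilde V$ a unitary extension of $V$, for $z,w\in\Omega_a\setminus\mathbb T$ nonzero, $U_{zw}=I+(z-w)(\widetilde V-zI)^{-1}$. Here $\widetilde V$ is unitary, so for $z\notin\mathbb T$ the resolvent $(\widetilde V-zI)^{-1}$ exists and $U_{zw}$ is bounded. Since $\widetilde V$ extends $V_0$ on $(\ker V)^\perp$, we have $\widetilde V f=V_0f$ there.

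The plan is purely algebraic, using the resolvent identity together with the defining relation $M_z=(V_0-zI)(\ker V)^\perp=(\widetilde V-zI)(\ker V)^\perp$.

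\textbf{Part (1).} Write
\[
U_{zw}=(\widetilde V-zI)^{-1}\bigl((\widetilde V-zI)+(z-w)I\bigr)=(\widetilde V-zI)^{-1}(\widetilde V-wI).
\]
Similarly, $U_{wz}=(\widetilde V-wI)^{-1}(\widetilde V-zI)$. All these resolvents are functions of the same operator $\widetilde V$, so they commute, and hence $U_{zw}U_{wz}=U_{wz}U_{zw}=I$.

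\textbf{Part (2).} We use the commuting factorization $U_{zw}=(\widetilde V-wI)(\widetilde V-zI)^{-1}$. For $h=(V_0-zI)f$ with $f\in(\ker V)^\perp$, we have $(\widetilde V-zI)^{-1}h=f$, so
\[
U_{zw}h=(\widetilde V-wI)f=(V_0-wI)f\in M_w.
\]
Thus $U_{zw}M_z\subseteq M_w$, and by symmetry $U_{wz}M_w\subseteq M_z$. Combined with (1), this gives a bijection of $M_z$ onto $M_w$.

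\textbf{Part (3).} This is the step requiring care, and I would handle it by duality. Since $\widetilde V$ is unitary,
\[
U_{zw}^*=I+(\bar z-\bar w)(\widetilde V^*-\bar zI)^{-1}.
\]
Using $\widetilde V^*=\widetilde V^{-1}$, we get $(\widetilde V^{-1}-\bar z)^{-1}=-\tfrac{1}{\bar z}\widetilde V(\widetilde V-\tfrac1{\bar z})^{-1}$. This is where $z\neq0$ is needed, and $1/\bar z$ is not on $\mathbb T$ because $z\notin\mathbb T$. A short computation should then show
\[
U_{zw}^*=\tfrac{\bar w}{\bar z}\,(\widetilde V-\tfrac1{\bar w}I)(\widetilde V-\tfrac1{\bar z}I)^{-1}=\tfrac{\bar w}{\bar z}\,U_{\frac1{\bar z}\frac1{\bar w}}.
\]
The identity is checked by comparing the factors $\widetilde V-\tfrac1{\bar w}$ and $\widetilde V-\tfrac1{\bar z}$, and I expect this bookkeeping to be the main technical point. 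We need $1/\bar z,1/\bar w\in\Omega_a\setminus\mathbb T$, which holds by the symmetry of $\Omega_a$.

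By part (2), $U_{\frac1{\bar z}\frac1{\bar w}}$ maps $M_{1/\bar z}$ bijectively onto $M_{1/\bar w}$, so $U_{zw}^*$ does as well, since a nonzero scalar multiple does not affect this. Now take $y\in M_{1/\bar w}^\perp$ and $m\in M_{1/\bar z}$. Then
\[
\langle U_{zw}y,m\rangle=\langle y,U_{zw}^*m\rangle=0,
\]
so $U_{zw}M_{1/\bar w}^\perp\subseteq M_{1/\bar z}^\perp$. Swapping the roles of $z$ and $w$ gives $U_{wz}M_{1/\bar z}^\perp\subseteq M_{1/\bar w}^\perp$, and together with (1) this yields the bijection in (3).
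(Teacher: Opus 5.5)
Your proof is correct and follows the paper's argument: (1) uses the factorization $U_{zw}=(\widetilde V-zI)^{-1}(\widetilde V-wI)$, (2) uses $U_{zw}(V_0-zI)f=(V_0-wI)f$, and (3) is proved by duality through $U_{zw}^*$. The only differences are cosmetic. You package the adjoint computation as the identity $U_{zw}^*=\tfrac{\bar w}{\bar z}\,U_{\frac{1}{\bar z}\frac{1}{\bar w}}$ and reduce (3) to (2), whereas the paper evaluates $U_{zw}^*$ directly on a vector $(V_0-\tfrac{1}{\bar z}I)f\in M_{1/\bar z}$. Also, the paper gets injectivity in (2) from $w$ being a point of regular type rather than from (1).
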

\begin{proof}\textbf{(1)} \begin{eqnarray*}
U_{zw}^{-1}&=& \big(I+(z-w)(\widetilde{V}-zI)^{-1}\big)^{-1}\\
&=& \big((\tilde{V}-z+z-w)(\tilde{V}-z)^{-1} \big)^{-1}\\
&=&\big((\tilde{V}-w)(\tilde{V}-z)^{-1}\big)^{-1}\\
&=& (\tilde{V}-z)(\tilde{V}-w)^{-1}= U_{wz}
\end{eqnarray*}
\textbf{(2)}
Let $g \in (\ker V)^{\perp}$. Then $Vg=\widetilde{V}g$. Hence, we have
\begin{eqnarray*}
U_{zw}(V_0-zI)g
&=& \big(I+(z-w)(\widetilde{V}-zI)^{-1}\big)(V_0-zI)g\\
&=&(V_0-zI)g+(z-w)g\\
&=&(V_0-wI)g .
\end{eqnarray*}
Let $h \in M_w$. Then there exists $g \in (\ker V)^{\perp}$ such that $h=(V_0-wI)g $. If we set $f=(V_0-zI)g,$ then from the above equality we obtain $U_{zw}f=h$. Therefore, $U_{zw}$ is onto. Next, we show that $U_{zw}$ is one-one. Let $f \in M_z$ be such that $U_{zw}f=0 $. Since $f \in M_z$, there exists $g \in H$ such that $f=(V_0-zI)g$. Hence,
\begin{eqnarray*}
& & U_{zw}(V_0-zI)g=0 \\
&\Rightarrow &(V_0-wI)g=0 .
\end{eqnarray*}
Since $w$ is a point of regular type of $V_0$, it follows that $g=0$. Consequently, $f=0$. Thus, $U_{zw}$ is one-one.

\textbf{(3)} Let $\phi \in M_{\frac{1}{\overline{w}}}^{\perp}$. Then for all $g \in M_{\frac{1}{\overline{z}}}$, we have \begin{eqnarray*}
\langle U_{zw} \phi,g \rangle &=& \langle \phi, U_{zw}^*g\rangle\\
&=& \langle\phi, (\tilde{V}^{-1}-\overline{w})(\tilde{V}^{-1}-\overline{z})^{-1}g \rangle\\
&=& \langle\phi, (\tilde{V}^{-1}-\overline{w})(\tilde{V}^{-1}-\overline{z})^{-1}(V_0- \tfrac{1}{\overline{z}}) f \rangle\\
&=& \langle\phi, (\tilde{V}^{-1}-\overline{w})\tilde{V}(I- \overline{z}\tilde{V})^{-1}(V_0- \tfrac{1}{\overline{z}}) f \rangle\\ 
&=&\frac{-\overline{w}}{z}\langle \phi, (I-\overline{w}\tilde{V})f \rangle=0
\end{eqnarray*}
where $f$ is such that $g= (V_0- \tfrac{1}{\overline{z}})f$.
This implies that $$U_{zw}M_{\frac{1}{\overline{w}}}^{\perp} \subset M_{\frac{1}{\overline{z}}}^{\perp}. $$ Similarly, we obtain $$U_{wz}M_{\frac{1}{\overline{z}}}^{\perp} \subset M_{\frac{1}{\overline{w}}}^{\perp}. $$
By \textbf{(1)}, we get that $U_{zw}$ is a one-to-one and onto map from 
$M_{\frac{1}{\overline{w}}}^{\perp}$ onto $M_{\frac{1}{\overline{z}}}^{\perp}$.
\end{proof}

Now we prove Theorem~\ref{Theorem M}.
\begin{proof}[Proof of Theorem \ref{Theorem M} ] In order to prove the theorem, we will use the characterization of vector valued de Branges spaces $\mathcal{B}(\mathfrak{E})$ given in Theorem \ref{Theorem C}. The proof is divided into the following steps. In step $1$, we show that $K_{\beta}(z)$ and $ K_{\frac{1}{\bar{\beta}}}(z)$ are Fredholm operators for all $z \in \Omega_a$. In step $2$, we show that the operators $K_{\beta}(\beta)$ and $ K_{\frac{1}{\bar{\beta}}}(\frac{1}{\bar{\beta}})$ are invertible. In steps $3$ and $4$, we verify conditions $(1)$ and $(2)$ of Theorem \ref{Theorem C}, respectively.\\
\textbf{Step 1.} Since $\mathbf{H}$ is a RKHS, the RK is given by $K_{w}(z)= \delta_z\delta_w^*$, for all $z,w \in \Omega_a$. Here $\delta_z: \mathbf{H} \rightarrow Y$ denotes the pointwise evaluation operator defined by $$\delta_z(f_Y)= f_Y(z)= P_Y(z)f.$$ 
Hence, \begin{equation} \label{Equation 4.1}\rng(\delta_z)=Y~ \mbox{and}~ \ker(\delta_z)=\{ f_Y : f \in (V_0-zI)(\ker V)^{\perp} \}.
\end{equation} 
This implies \begin{equation} \label{Equation 4.2}\ker(\delta_z^*)=\{ 0 \}~ \mbox{and}~ \rng(\delta_z^*)=\{ f_Y : f \in H \ominus (V_0-zI)(\ker V)^{\perp}  \}.
\end{equation}
Thus, \begin{eqnarray*}\dim(\ker K_{\beta}(z))
&=&\dim(\ker \delta_z \delta_{\beta}^*)\\
&=& \dim(\ker \delta_{\beta}^*)+\dim(\ker \delta_z \cap \rng \delta_{\beta}^*)\\
&=& \dim(\ker \delta_z \cap \mathrm{rng}~\delta_{\beta}^*)\\
&=& \dim(M_z \cap M_{\beta}^{\perp}).\end{eqnarray*}

Similarly, $$\dim(\ker K_{\beta}(z)^*)=\dim(\ker K_{z}(\beta))=\dim(M_{\beta} \cap M_{z}^{\perp}).$$
By, Lemma \ref{Lemma Cayley}, the map $$U_{z \beta}: M_z \cap M_{\frac{1}{\overline{\beta}}}^{\perp} \rightarrow M_{\beta} \cap M_{\frac{1}{\overline{z}}}^{\perp}$$ is bijective for all $z \in \Omega_a \setminus \mathbb{T}$, $z \neq 0$. Hence, for all such $z$, we have the following equality 
\begin{equation}\label{4.5}
\dim (M_z \cap M_{\frac{1}{\overline{\beta}}}^{\perp} )= \dim(M_{\beta} \cap M_{\frac{1}{\overline{z}}}^{\perp})
\end{equation}
Similarly, the map
 $$U_{z \frac{1}{\overline{\beta}}}: M_z \cap M_{\beta}^{\perp} \rightarrow M_{\frac{1}{\overline{\beta}}} \cap M_{\frac{1}{\overline{z}}}^{\perp}$$
  is bijective for all $z \in \Omega_a \setminus \mathbb{T}$, $z \neq 0$. Hence, for all such $z$, we have
  \begin{equation}\label{eq4.6}
\dim (M_z \cap M_{\beta}^{\perp})= \dim(M_{\frac{1}{\overline{\beta}}} \cap M_{\frac{1}{\overline{z}}}^{\perp})\end{equation}
If $z \in \Omega_a \cap \mathbb{T}$, then by Theorem \ref{Theorem D}, we have the decomposition \begin{equation} \label{0}\mathcal{H}= M_{\beta} \dotplus M_{z}^{\perp}= M_{\frac{1}{\overline{\beta}}} \dotplus M_{z}^{\perp}.
\end{equation}
This implies \begin{equation}\label{4.8}
M_{\beta} \cap M_{z}^{\perp}= \{0 \} ~\mbox{and}~ M_{\frac{1}{\overline{\beta}}} \cap M_{z}^{\perp} =\{0 \}~\mbox{for all}~ z \in \Omega_a \cap \mathbb{T}.
\end{equation} Moreover, it is known that for closed subspaces $M$,$N$ of  a Banach space, $$(M+N)^{\perp}= M^{\perp} \cap N^{\perp}.$$ 
Using this fact and \eqref{0}, we obtain $\{0\} = \mathcal{H}^{\perp}= \big( M_{\beta} + M_{z}^{\perp}\big)^{\perp}= M_{\beta}^{\perp} \cap (M_z^{\perp})^{\perp}$ and  $\{0\} = \mathcal{H}^{\perp}= \big( M_{\frac{1}{\overline{\beta}}} + M_{z}^{\perp}\big)^{\perp}= M_{\frac{1}{\overline{\beta}}}^{\perp} \cap (M_z^{\perp})^{\perp}$. Hence \begin{equation}\label{4.9}
M_{\beta}^{\perp} \cap M_{z}= \{0 \} ~\mbox{and}~ M_{\frac{1}{\overline{\beta}}}^{\perp} \cap M_{z} =\{0 \} ~\mbox{for all}~ z \in \Omega_a \cap \mathbb{T}.
\end{equation} 
Now, by conditions i), ii), iii) of the hypothesis together with \eqref{eq4.6}, \eqref{4.8} and \eqref{4.9}, we conclude that $\dim(\ker K_{\beta}(z))$ and $\dim(\ker K_{\beta}(z)^*)$ are finite for all $z \in \Omega_a$. To show that $K_{\beta}(z)$ is Fredholm for all $z\in \Omega_a$, it remains to prove that $\rng{K_{\beta}(z)}$ is closed for all $z \in \Omega_a$, or equivalently, $\rng{K_{z}(\beta)}$ is closed for all $z \in \Omega_a$. By \cite[Corollary 2.5]{prod}, we have that \begin{eqnarray*} \rng K_{z}(\beta) \text{ is closed}\iff  \ker\delta_{\beta}+ \rng \delta_{z}^*\text{ is closed} \iff M_{\beta} +M_z^{\perp}\text{ is closed}.\end{eqnarray*} By the closedness condition in i) of hypothesis together with \eqref{0}, we obtain that $\rng K_{\beta}(z)$ is closed for all $z \in \Omega_a$. Similarly, using conditions i), ii), iii) of the hypothesis and by observation in \eqref{4.5}, \eqref{eq4.6}, \eqref{0}, \eqref{4.8} and \eqref{4.9}, it can be shown that $K_{\frac{1}{\overline{\beta}}}(z)$ is Fredholm for all $z \in \Omega_a$.\\
$\textbf{Step 2.}$ By Step 1, the operator $ K_{\beta}(\beta)$ is Fredholm. Moreover, its index is zero since $$ \dim(\ker K_{\beta}(\beta)^*)=\dim(\ker K_{\beta}(\beta))= \dim(M_{\beta} \cap M_{\beta}^{\perp})= 0 .$$
Now, by part $(2)$ of Theorem \ref{Theorem 2.5}, we get that the operator $ K_{\beta}(\beta)$ is invertible. Similarly, it can be shown that $ K_{\frac{1}{\overline{\beta}}}(\frac{1}{\overline{\beta}})$ is also invertible.\\
$\textbf{Step 3.}$ The space $\mathbf{H}$ is $R_{z}$ invariant for all $z \in \Omega_a$. To see this, let $g$ and $g'$ be such that $$f= (V_0-wI)g+P_Y(w)f= (V_0-zI)g'+P_Y(z)f,$$
which implies that $$g'=(V_0-wI)\frac{g-g'}{w-z}+\frac{P_Y(w)f-P_Y(z)f}{w-z}.$$ Thus, $$(R_zf_Y)(w)= \frac{f_Y(w)-f_Y(z)}{w-z}= \frac{P_Y(w)f-P_Y(z)f}{w-z}= P_Y(w)g'= g'_Y(w).$$
Hence, in particular, for the points $\beta$ and $\overline{\beta}$, and the spaces $\mathbf{H}_{\beta}$ and $\mathbf{H}_{\frac{1}{\overline{\beta}}}$, condition $(1)$ holds.\\
$\textbf{Step 4.}$ The operator $V_0$ is isometry on $H \oplus \{0 \}$ and is unitarily equivalent to the multiplication operator $\mathfrak{T}$ on $ \mathcal{D}(\mathfrak{T})= \Psi(H \oplus \{0 \})$, where $\Psi$ is as given by $\eqref{Psi}$. Hence, the multiplication operator $\mathfrak{T}$ is isometric operator on $\Psi(H \oplus \{0 \})$. Now, using Corollary \ref{cor}, we have that the operator $(I-\bar{\beta}\mathfrak{T})R_{\beta}$ maps $\mathbf{H}_{\beta}$ isometrically onto $\mathbf{H}_{\frac{1}{\bar{\beta}}}$. Since $$S_{\beta}f= \big( -\bar{\beta}I+(1- \vert \beta \vert ^2)R_{\beta}\big)f= (I-\bar{\beta}\mathfrak{T})R_{\beta}f,$$ for all $f \in \mathbf{H}_{\beta}$, we get that the condition $(2)$ holds.
\end{proof}

\section{Connections between the Sz.-Nagy-Foias characteristic function of $T$, $P_Y(z)$ and the corresponding de Branges reproducing kernel }
\label{Section 5}
In this section, we show that the two operator valued analytic functions $\Theta_T(z)$ and $P_Y(z)$ coincide on $\mathbb{D}$.   
First, we recall the definition of coincidence on an open set $\Omega$ in $\mathbb{C}$. For each $\lambda \in \Omega$, let $R(\lambda) \in B(X_1,Y_1)$ and $Q(\lambda) \in B(X_2, Y_2)$, be bounded linear operators, where $X_1$, $X_2$, $Y_1$, and $Y_2$ are Hilbert spaces. The operator valued functions $R(.)$ and $Q(.)$ are said to coincide on $\Omega$ if there exist unitary operators $A \in  B(X_1,X_2)$ and $B \in B(Y_1,Y_2)$ such that: $$R(\lambda)= A Q(\lambda)B^{-1}, \quad \lambda\in \Omega.$$

For the cnu contraction operator $T$ on a Hilbert space $H$ such that $a \in \mathbb{T} \cap \rho(T)$, define the operator $S_a$ on $H$ by
\[
S_a=-(T^*-\overline{a})^{-1}D_T\Big(I+D_T(T-a)^{-1}(T^*-\overline{a})^{-1}D_T\Big)^{-1/2}.
\]
Let us evaluate $I-S_a^*S_a$:
\begin{align*}
I-S_a^*S_a
&= I-\Big((T^*-\overline{a})^{-1}D_T
\big(I+D_T(T-a)^{-1}(T^*-\overline{a})^{-1}D_T\big)^{-1/2}\Big)^* \\
&\qquad \times (T^*-\overline{a})^{-1}D_T
\big(I+D_T(T-a)^{-1}(T^*-\overline{a})^{-1}D_T\big)^{-1/2}\\[2mm]
&= I-
\big(I+D_T(T-a)^{-1}(T^*-\overline{a})^{-1}D_T\big)^{-1/2} \\
&\qquad \times D_T(T-a)^{-1}(T^*-\overline{a})^{-1}D_T \\
&\qquad \times 
\big(I+D_T(T-a)^{-1}(T^*-\overline{a})^{-1}D_T\big)^{-1/2}\\[2mm]
&=
\big(I+D_T(T-a)^{-1}(T^*-\overline{a})^{-1}D_T\big)^{-1/2} \\
&\qquad \times 
\big(I+D_T(T-a)^{-1}(T^*-\overline{a})^{-1}D_T  \\
&\qquad\qquad - D_T(T-a)^{-1}(T^*-\overline{a})^{-1}D_T\big) \\
&\qquad \times
\big(I+D_T(T-a)^{-1}(T^*-\overline{a})^{-1}D_T\big)^{-1/2}\\[2mm]
&=
\big(I+D_T(T-a)^{-1}(T^*-\overline{a})^{-1}D_T\big)^{-1}.
\end{align*}
Since 
\[
\big(I+D_T(T-a)^{-1}(T^*-\overline{a})^{-1}D_T\big)^{-1}\geq 0,
\]
it follows that $I-S_a^*S_a \geq 0$. Therefore,
\[
S_a^*S_a \leq I,
\]
which shows that $S_a$ is a contraction.
Consider the unitary operator matrix on the Hilbert space $\mathcal{H}$ given by the Julia operator matrix corresponding to the contraction $S_a$:
\begin{equation}
\Gamma_a=
\begin{bmatrix}
S_a & D_{S_a^*}\\
D_{S_a} & -S_a^*
\end{bmatrix}.
\end{equation}
We identify $H$ with the subspace $H\oplus\{0\}\subset \mathcal{H}$, and $\mathcal{D}_T$, $\mathcal{D}_{T^*} \subseteq H$ with $\mathcal{D}_T \oplus \{0 \}$, $\mathcal{D}_{T^*} \oplus \{0 \} \subseteq \mathcal{H}$.

For every $f\in H$ and $z\in\mathbb{D}$, we have
\begin{eqnarray}\label{5.2}
\Gamma_a \Theta_T(z)D_Tf
&=& \Gamma_a
\begin{bmatrix}
\Theta_T(z)D_Tf\\
0
\end{bmatrix} \nonumber\\
&=& \Gamma_a
\begin{bmatrix}
D_{T^*}(I-zT^*)^{-1}(z-T)f\\
0
\end{bmatrix} \nonumber\\
&=&
\begin{bmatrix}
S_aD_{T^*}(I-zT^*)^{-1}(z-T)f\\
D_{S_a}D_{T^*}(I-zT^*)^{-1}(z-T)f
\end{bmatrix} \nonumber\\
&=&
\begin{bmatrix}
-(T^*-\overline{a})^{-1}D_T h\\
h
\end{bmatrix},
\end{eqnarray}
where
\begin{equation}\label{h}
h=
\Big(I+D_T(T-a)^{-1}(T^*-\overline{a})^{-1}D_T\Big)^{-1/2}
D_{T^*}(I-zT^*)^{-1}(z-T)f.
\end{equation}
The element \[
\begin{bmatrix}
0\\
D_T f
\end{bmatrix}\in \mathcal{H}
\] admits the following direct sum decomposition (see the proof of Theorem \ref{Theorem 5.1}):  
\begin{equation}\label{5.4} \begin{bmatrix} 
	0  \\
	D_Tf \\
	\end{bmatrix}= \begin{bmatrix} 
	(T-zI)j  \\
	D_Tj \\
	\end{bmatrix}+ \begin{bmatrix} 
	-(T^*-\overline{a})^{-1}D_T h \\
	h\\
	\end{bmatrix},
	\end{equation}
where \begin{equation}\label{j}j= (I+ \vert z \vert^2-zT^*-\overline{z}T)^{-1} \Big( (T^*-\overline{z}) (T^*-\overline{a})^{-1} D_Th+D_T (D_Tf-h) \Big).\end{equation}
The set 
\[
Y=M_a^{\perp}=((V_0-aI)(\ker V)^{\perp})^{\perp}
\]
is given by
\begin{eqnarray}
M_a^{\perp}
&=& \left\{
\begin{bmatrix}
h_1\\
h_2
\end{bmatrix}\in \mathcal{H}:
\left\langle
\begin{bmatrix}
h_1\\
h_2
\end{bmatrix},
\begin{bmatrix}
(T-aI)k\\
D_Tk
\end{bmatrix}
\right\rangle =0,\ \text{for all } k\in H
\right\} \nonumber\\
&=& \left\{
\begin{bmatrix}
h_1\\
h_2
\end{bmatrix}\in \mathcal{H}:
\langle (T^*-\overline{a})h_1+D_T h_2,\, k\rangle =0,\ \text{for all } k\in H
\right\} \nonumber\\
&=& \left\{
\begin{bmatrix}
-(T^*-\overline{a})^{-1}D_T h_2\\
h_2
\end{bmatrix}
:\ h_2\in H
\right\} \label{eqn 5.6}
\end{eqnarray}
Thus, in the decomposition \eqref{5.4}, the element
\[
\begin{bmatrix}
(T-zI)j\\
D_T j
\end{bmatrix}
\in (V_0-zI)(\ker V)^{\perp}
\]
and
\[
\begin{bmatrix}
-(T^*-\overline{a})^{-1}D_T h\\
h
\end{bmatrix}
\in Y.
\]
By Theorem~\ref{Theorem D}, such a decomposition is unique. Hence,
\begin{equation}\label{5.6}
P_Y(z)
\begin{bmatrix}
0\\
D_T f
\end{bmatrix}
=
\begin{bmatrix}
-(T^*-\overline{a})^{-1}D_T h\\
h
\end{bmatrix},
\end{equation}
where \(h\) is as defined in \eqref{h}. 
Define a unitary operator $J$ on the Hilbert space $\mathcal{H}$ by
\begin{equation}\label{J}
J=
\begin{bmatrix}
0 & I\\
I & 0
\end{bmatrix}.
\end{equation}

Thus, by \eqref{5.2}, \eqref{5.6}, and \eqref{J}, we obtain that for every $f\in H$,
\begin{equation}
\Gamma_a \Theta_T(z)D_T f
=
P_Y(z)J
\begin{bmatrix}
D_T f\\
0
\end{bmatrix}.
\end{equation}

We summarize the above discussion in the following theorem of this section which provides the connection between the Sz.-Nagy-Foias characteristic function of $T$ and $P_Y(z)$ .	
\begin{thm}\label{Theorem 5.1}
Let $T$ be a cnu contraction operator on a Hilbert space $H$ such that a point $a \in \mathbb{T}$ belongs to $\rho(T)$. Then, for every $z \in \mathbb{D}$, the characteristic operator valued function $\Theta_T(z) \in B(\mathcal{D}_T, \mathcal{D}_{T^*})$ coincides with the operator valued function $P_Y(z)\big|_{\{0\} \oplus \mathcal{D}_T}$, where $P_Y(z)$ is as defined in \eqref{4.1}.
\end{thm}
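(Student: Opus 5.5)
The plan is to prove the identity already announced before the statement,
\[
\Gamma_a\,\Theta_T(z)D_Tf \;=\; P_Y(z)J\begin{bmatrix} D_Tf\\ 0\end{bmatrix},\qquad f\in H,\ z\in\mathbb D,
\]
and then read off the coincidence. Here $\Gamma_a$ is the Julia unitary of $S_a$, $J$ is the flip \eqref{J}, and $P_Y(z)$ is the projection from \eqref{4.1}. The argument has three parts: compute $\Gamma_a\Theta_T(z)D_Tf$, compute $P_Y(z)(0,D_Tf)$ by an explicit decomposition, and then handle the unitaries.

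\textbf{Step 1.} I first rewrite $\Theta_T(z)D_T=D_{T^*}(I-zT^*)^{-1}(z-T)$ using the intertwining $TD_T=D_{T^*}T$ and $D_T^2=I-T^*T$. Next I apply $\Gamma_a$ to the vector $(u,0)$, with $u=\Theta_T(z)D_Tf$. The first component is $S_au$. Recall $S_a=-(T^*-\overline a)^{-1}D_T\,N^{-1/2}$, where $N=I+D_T(T-a)^{-1}(T^*-\overline a)^{-1}D_T$. From the computation preceding the theorem, $D_{S_a}^2=N^{-1}$, so $D_{S_a}=N^{-1/2}$. Hence the second component is $h=N^{-1/2}u$, and the first is $-(T^*-\overline a)^{-1}D_Th$. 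This gives \eqref{5.2}. By \eqref{eqn 5.6}, the resulting vector lies in $Y=M_a^\perp$.

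\textbf{Step 2.} This is the main obstacle. I must show that $(0,D_Tf)-(-(T^*-\overline a)^{-1}D_Th,\,h)$ belongs to $M_z$. That is, I need some $j$ with
\[
(T-z)j=(T^*-\overline a)^{-1}D_Th \quad\text{and}\quad D_Tj=D_Tf-h.
\]
The uniqueness in Theorem~\ref{Theorem D} then yields \eqref{5.6}. Solving the overdetermined pair will proceed in two moves. First, apply $A_z^*$, i.e. multiply the first equation by $T^*-\overline z$ and the second by $D_T$ and add; this produces the formula \eqref{j}, since $A_z^*A_z=I+|z|^2-zT^*-\overline zT$ is invertible because $A_z$ is bounded below. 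Second, verify that this $j$ actually solves both equations, which holds precisely when the target vector lies in $M_z=\overline{\rng A_z}$, i.e. is orthogonal to $M_z^\perp$.

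To check this membership I will compute $M_z^\perp$ exactly as in \eqref{eqn 5.6}. For $z\in\mathbb D$ it is $\{(D_{T^*}(I-zT^*)^{-1}\,\cdot\,,\ \ldots)\}$, parametrized via the identity $(T^*-\overline z)x_1+D_Tx_2=0$. I will then test orthogonality using $T^*D_{T^*}=D_TT^*$ together with the defining relation $N^{1/2}h=D_{T^*}(I-zT^*)^{-1}(z-T)f$. If this direct verification becomes messy, the fallback is to show that the vector $(0,D_Tf)-P_Y(z)(0,D_Tf)$ equals $(V_0-z)g$. Applying $V_0^*$-type manipulations and $(I-zT^*)^{-1}$ should isolate $g$, giving $g=j$ from \eqref{j}.

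\textbf{Step 3.} Combining Steps 1 and 2 with $J(D_Tf,0)=(0,D_Tf)$ gives the displayed identity. Since $\rng D_T$ is dense in $\mathcal D_T$ and both sides are bounded, it extends to all of $\mathcal D_T$:
\[
\Gamma_a\Theta_T(z)=P_Y(z)J|_{\mathcal D_T\oplus\{0\}}.
\]
Here $J$ restricts to a unitary from $\mathcal D_T\oplus\{0\}$ onto $\{0\}\oplus\mathcal D_T$. Moreover, $\Gamma_a$ is unitary on $\mathcal H$ and, by Step 1, carries $\overline{\rng\Theta_T(z)}\subseteq \mathcal D_{T^*}$ into $Y$. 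Taking $A=\Gamma_a^{-1}$ and $B=J^{-1}$, suitably restricted, we obtain
\[
\Theta_T(z)=A\,P_Y(z)|_{\{0\}\oplus\mathcal D_T}\,B^{-1},
\]
which is the required coincidence. One point needs care: the target of $P_Y(z)|_{\{0\}\oplus\mathcal D_T}$ should be taken as the closed subspace $\Gamma_a(\mathcal D_{T^*}\oplus\{0\})\subseteq Y$, so that $A$ is a genuine unitary between the correct spaces. I will state this identification explicitly.
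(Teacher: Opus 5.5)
Your Step 2 is where the argument breaks, and it cannot be completed. The vector $\binom{0}{D_Tf}-\Gamma_a\binom{\Theta_T(z)D_Tf}{0}$ is in general not in $M_z$, so the orthogonality test against $M_z^\perp$ that you plan will fail.

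If you solve the decomposition directly, write $\binom{0}{D_Tf}=\binom{(T-z)k}{D_Tk}+\binom{-(T^*-\overline a)^{-1}D_Ty}{y}$. The second row gives $y=D_T(f-k)$. Applying $T^*-\overline a$ to the first row then gives
\[
(I+\overline a z-zT^*-\overline aT)\,k=D_T^2f,\qquad P_Y(z)\binom{0}{D_Tf}=\binom{-(T^*-\overline a)^{-1}D_Ty}{y},
\]
and this $y$ is not $h=N^{-1/2}\Theta_T(z)D_Tf$.

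Concretely, take $T=tI$ with $0<t<1$. This is a cnu contraction with $\mathbb T\subset\rho(T)$. Put $a=1$, $d=\sqrt{1-t^2}$ and $x=D_Tf$. Then $y=\frac{z-t}{1+z}\,x$, that is,
\[
P_Y(z)\binom{0}{x}=\frac{z-t}{1+z}\binom{\frac{d}{1-t}\,x}{x},\qquad \|P_Y(z)|_{\{0\}\oplus\mathcal D_T}\|=\frac{|z-t|}{|1+z|}\sqrt{\frac{2}{1-t}}.
\]
Your $h$ at $z=0$ is $-t\sqrt{(1-t)/2}\,x$, whereas the true value is $-tx$.

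On the other side, $\Theta_T(z)=\frac{z-t}{1-tz}I$ is contractive, and coincidence by unitaries preserves norms. The norm displayed above equals $t\sqrt{2/(1-t)}\neq t$ at $z=0$, and it is unbounded as $z\to-1$ in $\mathbb D$. So the asserted coincidence fails for this $T$.

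Your Step 1 is correct: $\Gamma_a$ maps $H\oplus\{0\}$ unitarily onto $Y$. But $P_Y(z)$ is an oblique projection, and that identification does not turn $\Gamma_a^{-1}P_Y(z)J$ into $\Theta_T(z)$.

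The paper's own proof has exactly the same hole. It produces $j$ by applying the left inverse $(F_z^*F_z)^{-1}F_z^*$ and then invokes the uniqueness in Theorem~\ref{Theorem D}. That uniqueness presupposes that the target vector already lies in $\mathrm{rng}\,F_z=M_z$, which is precisely the membership you rightly singled out as the main obstacle.

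Your fallback is circular for the same reason: it starts from the formula for $P_Y(z)\binom{0}{D_Tf}$ that is to be proved. So neither your plan nor the paper's argument can establish the statement as formulated. What the computation actually yields is the explicit formula for $P_Y(z)$ displayed above, and that is not a unitary coincidence with $\Theta_T$.
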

\begin{proof}
It remains to show that the decomposition given in \eqref{5.4} holds. 
For this purpose, define the operator $F_z: H \rightarrow H \oplus H$, for each $z \in \mathbb{D}$ given by
\[
F_z j :=
\begin{bmatrix}
(T-zI)j\\
D_T j
\end{bmatrix}, \quad j \in H
\]
The following equalities hold for all $j \in H$:
\begin{eqnarray*}
\left\langle j, F_z^*
\begin{bmatrix}
x\\
y
\end{bmatrix}
\right\rangle
&=&
\left\langle F_z j,
\begin{bmatrix}
x\\
y
\end{bmatrix}
\right\rangle \\
&=&
\left\langle
\begin{bmatrix}
(T-zI)j\\
D_T j
\end{bmatrix},
\begin{bmatrix}
x\\
y
\end{bmatrix}
\right\rangle \\
&=&
\langle (T-zI)j, x \rangle + \langle D_T j, y \rangle \\
&=&
\langle j, (T^*-\overline{z})x + D_T y \rangle .
\end{eqnarray*}	 This implies that the adjoint operator $F_z^*$ is given by $$F_z^*\begin{bmatrix} 
	x \\
	y \\
	\end{bmatrix}= (T^*-\overline{z})x+D_Ty.$$
	Hence, $$F_z^*F_z= I+\vert z \vert^2- \overline{z}T-zT^*.$$ Consider the following:
	\begin{eqnarray*}
	\langle F_z^*F_z j,j \rangle &=& \langle (I+\vert z \vert^2- \overline{z}T-zT^*)j,j \rangle \\
	&=&(I+\vert z \vert^2)\Vert j \Vert^2- \langle (\overline{z}T+zT^*)j,j \rangle\\
	&=& (I+\vert z \vert^2)\Vert j \Vert^2-2 \Re (\overline{z} \langle Tj,j \rangle)\\
	&\geq & (I+\vert z \vert^2)\Vert j \Vert^2-2 \vert z \vert \vert\langle Tj,j \rangle \vert \\
	&\geq & (I+\vert z \vert^2)\Vert j \Vert^2-2 \vert z \vert \Vert j \Vert^2 \\
	&=& (I-\vert z \vert )^2 \Vert j \Vert^2
	\end{eqnarray*}
Thus, for each $z\in\mathbb{D}$, the operator $F_z$ is bounded below. Consequently, the operator $F_z^*F_z$ is invertible. We now determine $j$ for which the following equality holds \begin{equation}\label{5.10}
F_zj= \begin{bmatrix} 
	(T^*-\overline{a})^{-1}D_T h \\
	D_Tf -h\\
	\end{bmatrix},
\end{equation}
where $h$ is as defined in \eqref{h}. Premultiplying both sides of \eqref{5.10} by $(F_z^*F_z)^{-1}F_z^*$, we obtain $$j= (F_z^*F_z)^{-1}F_z^*\begin{bmatrix} 
	(T^*-\overline{a})^{-1}D_T h \\
	D_Tf -h\\
	\end{bmatrix}$$ 
Since the decomposition in \eqref{5.4} is unique, this is precisely the element $j$ given in \eqref{j}. The remainder of the proof now follows from the above discussions.
\end{proof}	

\begin{rmk}\label{Remark 5.2}
Observe that for $f \in \mathcal{H}$ and $z \in \Omega_a$, we have  $$P_Y(z)f= f_Y(z)= \delta_z(f_Y)= \delta_z(\Psi f).$$
This implies that $$P_Y(z)= \delta_z\Psi, \quad z \in \Omega_a.$$
Consequently, for all $z,w \in \Omega_a$, we have
\begin{eqnarray*}
K_w(z)= \delta_z \delta_w^*= P_Y(z) \Psi^{-1} \Psi P_Y(w)^*= P_Y(z) P_Y(w)^*.
\end{eqnarray*}
\end{rmk}
\begin{rmk}\label{Remark 5.3}
From Theorem \ref{Theorem 5.1} and the expression of kernel function given in Theorem \ref{Theorem C}, we obtain the following for all $z, w \in \mathbb{D}$,
 \begin{eqnarray*}\Gamma_a^*~\frac{E_+(z)E_+(w)^*-E_-(z)E_-(w)^*}{\rho_w(z)}~\Gamma_a \big|_{\mathcal{D}_{T^*} \oplus \{0 \}} &=& \Gamma_a^*P_Y(z) P_Y(w)^*\Gamma_a \big|_{\mathcal{D}_{T^*} \oplus \{0 \}} \\
&=& \Theta_T(z)\Theta_T(w)^* .
\end{eqnarray*}
\end{rmk}
This establishes the connection between the operator valued functions $\Theta_T(z)$, $P_Y(z)$ and $K_w(z)$.

In the course of constructing a functional model for the class of cnu contractions having at least one resolvent point on the unit circle, realized in a de Branges space corresponding to a de Branges operator, we are naturally led to investigate the question of unitary equivalence of two such contractions, say $T_1$ and $T_2$, and its relationship with the corresponding de Branges spaces. To this end, we observe that the de Branges quotient $E_+^{-1}E_-$ serves as a complete unitary invariant. To formulate the result precisely, for \(j=1,2\), let
\[
V_{0,j}:H\oplus \{0\}\to H\oplus H, \qquad
V_{0,j}=
\begin{bmatrix}
T_j & 0\\
D_{T_j} & 0
\end{bmatrix},
\]
and let
\[
Y_j:=\Bigl((V_{0,j}-aI)(H\oplus\{0\})\Bigr)^\perp.
\]
We shall prove the following theorem.
\begin{thm}\label{thm:unitary-equivalence-via-EplusinvEminus}
Let $T_1,T_2\in B(H)$ be two cnu contraction operators on the same Hilbert space $H$. Assume that there exist $
a\in \rho(T_1)\cap \rho(T_2)\cap \mathbb{T}$
and $\beta\in \mathbb{D}$ such that the hypotheses {\rm (i)}, {\rm (ii)}, {\rm (iii)} of Theorem~\ref{Theorem M} are satisfied for both $T_1$ and $T_2$. Let $
\mathcal{B}(\mathfrak{E})$ and $\mathcal{B}(\mathfrak{F})$
be the corresponding vector valued de Branges spaces obtained from $T_1$ and $T_2$, respectively, where
\[
\mathfrak{E}=(E_-,E_+), \qquad \mathfrak{F}=(F_-,F_+).
\]
Let $\mathfrak{T}_1$ and $\mathfrak{T}_2$ denote the corresponding multiplication operators by the independent variable on $\mathcal{B}(\mathfrak{E})$ and $\mathcal{B}(\mathfrak{F})$, respectively. Then the following are equivalent.
\begin{enumerate}
\item[(1)] $T_1$ and $T_2$ are unitarily equivalent.

\item[(2)] There exist unitary operators
\[
\Delta:Y_2\to Y_1, \qquad \Lambda:Y_1\to Y_2
\]
and a discrete set
\[
\Sigma\subset \Omega_a\setminus \overline{\mathbb{D}}
\]
such that
\[
E_+(z)^{-1}E_-(z)=\Delta\,F_+(z)^{-1}F_-(z)\,\Lambda,
\qquad z\in \Omega_a\setminus \Sigma.
\]
\end{enumerate}

Here the set $\Sigma$ is chosen so that $E_+$ and $F_+$ are invertible on $\Omega_a\setminus \Sigma$. In view of the discussion in Section~2, the possible singularities of $E_+^{-1}E_-$ and $F_+^{-1}F_-$ in $\mathbb{D}\cup (\Omega_a\cap \mathbb{T})$ are removable, and hence the only possible non-removable singularities lie in a discrete subset of $\Omega_a\setminus \overline{\mathbb{D}}$.
\end{thm}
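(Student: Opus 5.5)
The plan is to reduce both directions to the Sz.-Nagy--Foias theorem: two cnu contractions are unitarily equivalent if and only if their characteristic functions coincide on $\mathbb D$. Theorem~\ref{Theorem 5.1} and Remark~\ref{Remark 5.3} link $\Theta_{T_j}$ to $P_{Y_j}(z)$ and to the kernel of the de Branges space. The second ingredient is the identity theorem for operator valued analytic functions, which lets us pass between $\mathbb D$ and $\Omega_a\setminus\Sigma$.

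\textbf{(1)$\Rightarrow$(2).} Let $W\in B(H)$ be unitary with $T_2=WT_1W^*$. Then $D_{T_2}=WD_{T_1}W^*$, so $\widehat W:=W\oplus W$ intertwines $V_{0,1}$ with $V_{0,2}$ and $V_1$ with $V_2$. It follows that $\widehat W M^{(1)}_z=M^{(2)}_z$ for every $z\in\Omega_a$ and $\widehat W Y_1=Y_2$. Uniqueness in the decomposition \eqref{D} gives
\[
P_{Y_2}(z)=\widehat W P_{Y_1}(z)\widehat W^*,
\]
so $\Psi_2\widehat W\Psi_1^{-1}$ is a unitary from $\mathcal B(\mathfrak E)$ onto $\mathcal B(\mathfrak F)$. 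By Remark~\ref{Remark 5.2}, the kernels are related by $K^{(2)}_w(z)=\widehat W K^{(1)}_w(z)\widehat W^*$. Substituting this into the formulas for $E_\pm$ and $F_\pm$ in Theorem~\ref{Theorem C} shows that $F_\pm(z)=\widehat W E_\pm(z)U_\pm$. Here $U_+$ and $U_-$ are the unitaries $(\rho_\beta(\beta)K^{(1)}_\beta(\beta))^{-1/2}\widehat W^*(\rho_\beta(\beta)K^{(2)}_\beta(\beta))^{1/2}$ and its analogue at $1/\overline\beta$; they are unitary because they are polar parts of conjugated positive operators. Hence
\[
F_+^{-1}F_-=U_+^*E_+^{-1}E_-U_-,
\]
and we may take $\Delta=U_+$ and $\Lambda=U_-^*$, suitably restricted to $Y_1$ and $Y_2$. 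For $\Sigma$ we take the union of the discrete sets in $\Omega_a\setminus\overline{\mathbb D}$ where $E_+$ or $F_+$ fails to be invertible.

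\textbf{(2)$\Rightarrow$(1).} Restrict the identity in (2) to $\mathbb D$. The aim is to show that $\Theta_{T_1}$ and $\Theta_{T_2}$ coincide there. By Theorem~\ref{Theorem HSC}(1), $\mathcal B(\mathfrak E)$ is unitarily identified with $\mathcal H(\Theta_{\mathfrak E})$ via $M_{E_+}$, and $\mathfrak T_1$ corresponds to the compressed multiplication by $z$. The identity $\Theta_{\mathfrak E}=\Delta\Theta_{\mathfrak F}\Lambda$ then gives a unitary $\mathcal H(\Theta_{\mathfrak F})\to\mathcal H(\Theta_{\mathfrak E})$, namely $g\mapsto\Delta g$, whose kernel identity is
\[
\frac{I-\Delta\Theta_{\mathfrak F}(z)\Lambda\Lambda^*\Theta_{\mathfrak F}(w)^*\Delta^*}{\rho_w(z)}=\Delta K^{\Theta_{\mathfrak F}}_w(z)\Delta^*.
\]
This map intertwines the multiplication-by-$z$ structure.

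It remains to transport this to $T_1$ and $T_2$. The domain $\mathcal D(\mathfrak T_j)=\Psi_j(H\oplus\{0\})$ must be identified in $\mathcal H(\Theta)$ as the subspace $\{g: zg\in\mathcal H(\Theta)\}$, equivalently $R_\beta$ applied to the functions vanishing at $\beta$ (the proof of the theorem preceding Corollary~\ref{cor}). Intertwining multiplication by $z$ then preserves this domain, and (\ref{T}) yields $T_1\cong T_2$. Alternatively, one can use Remark~\ref{Remark 5.3} to write $\Theta_{T_j}(z)\Theta_{T_j}(w)^*$ in terms of $E_\pm$, deduce from (2) that the kernels coincide after conjugation by unitaries, and conclude that $\Theta_{T_1}$ and $\Theta_{T_2}$ coincide, which gives $T_1\cong T_2$ by Sz.-Nagy--Foias.

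\textbf{Main obstacle.} The hard part is the direction (2)$\Rightarrow$(1). There $E_+$ itself is not determined by the quotient, so the unitaries $\Delta$ and $\Lambda$ must be shown to produce a genuine coincidence of the purely contractive parts. The delicate point is that the coincidence of $\Theta_{T_j}$ with $P_{Y_j}|_{\{0\}\oplus\mathcal D_{T_j}}$ is only a restriction to the defect space, via $\Gamma_a$ and $J$. I would first try to show that $\Lambda$ and $\Delta$ map the images $\Gamma_a J(\mathcal D_{T_2})$ and $\Gamma_a J(\mathcal D_{T_1})$ onto each other. If that fails, I would instead use the fact that the de Branges--Rovnyak model of $\Theta_{\mathfrak E}$, minus its unitary constant part, already determines $T$ up to unitary equivalence.
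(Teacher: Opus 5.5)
Your proof is correct. In $(2)\Rightarrow(1)$ it is the paper's argument in different packaging. The composite $M_{E_+}\circ\Delta\circ M_{F_+}^{-1}$ you obtain from Theorem~\ref{Theorem HSC} is exactly the paper's multiplier $\mathcal U f=Wf$ with $W(z)=E_+(z)\Delta F_+(z)^{-1}$. Both arguments then finish the same way: a unitary commuting with multiplication by $z$ maps $\mathcal D(\mathfrak T_2)$ onto $\mathcal D(\mathfrak T_1)$, and hence intertwines the compressions in \eqref{T}. You are more explicit than the paper in identifying $\Psi_j(H\oplus\{0\})=R_\beta\mathbf H_\beta$ with the maximal domain $\{f: zf\in\mathbf H\}$, which the paper uses tacitly. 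The only cost of working in $\mathcal H(\Theta)$ on $\mathbb D$ is the remark that functions on the connected set $\Omega_a$ are determined by their restrictions to $\mathbb D$.

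The genuine difference is in $(1)\Rightarrow(2)$. The paper goes through the Sz.-Nagy--Foias coincidence of $\Theta_{T_1}$ and $\Theta_{T_2}$ and Theorem~\ref{Theorem 5.1}. It then asserts a constant unitary $\Xi$ with $K^{\mathfrak E}_w(z)=\Xi K^{\mathfrak F}_w(z)\Xi^*$ on $\mathbb D$, and extends the quotient identity to $\Omega_a\setminus\Sigma$ by the identity theorem. You instead transport everything directly by $\widehat W=W\oplus W$. This gives $\widehat W M^{(1)}_z=M^{(2)}_z$, $\widehat W Y_1=Y_2$ and $P_{Y_2}(z)=\widehat W P_{Y_1}(z)\widehat W^*$, hence $F_\pm=\widehat W E_\pm\widehat W^*$ on all of $\Omega_a$ at once. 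Your route has several advantages:
\begin{enumerate}
\item It is more elementary, needing neither characteristic functions nor analytic continuation.
\item It shows that $E_+$ and $F_+$ fail to be invertible at exactly the same points.
\item It produces directly the full kernel relation the paper needs. Coincidence of the restrictions $P_{Y_j}(z)|_{\{0\}\oplus\mathcal D_{T_j}}$ alone does not control $P_{Y_j}(z)P_{Y_j}(w)^*$ on all of $Y_j$, so your version of this step is the more airtight one.
\end{enumerate}
What the paper's route buys is the explicit link with $\Theta_T$, which is the theme of that section.

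Two small fixes. First, your formula for $U_+$ has the exponents interchanged. Matching the formulas of Theorem~\ref{Theorem C} requires $(\rho_\beta(\beta)K^{(1)}_\beta(\beta))^{1/2}\widehat W^*(\rho_\beta(\beta)K^{(2)}_\beta(\beta))^{-1/2}$. Since $K^{(2)}_\beta(\beta)=\widehat W K^{(1)}_\beta(\beta)\widehat W^*$, both expressions equal $\widehat W^*$, so simply take $\Delta=\widehat W^*|_{Y_2}$ and $\Lambda=\widehat W|_{Y_1}$.

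Second, drop the ``alternatively'' route and the ``main obstacle'' paragraph, since your main line already settles $(2)\Rightarrow(1)$. Hypothesis (2) does not by itself make the kernels conjugate by constant unitaries. Directly, it only gives $K^{\mathfrak E}_w(z)=W(z)K^{\mathfrak F}_w(z)W(w)^*$ with the non-constant multiplier $W$; constant conjugacy is available only a posteriori, via (1). In addition, Remark~\ref{Remark 5.3} only sees restrictions to $\mathcal D_{T^*}\oplus\{0\}$. So that alternative would not go through as stated.
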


\begin{proof}
For $j=1,2$, let $\Theta_j$ denote the Sz.-Nagy--Foias characteristic function of $T_j$. Also let
\[
K_w^{\mathfrak{E}}(z)\qquad \text{and} \qquad K_w^{\mathfrak{F}}(z)
\]
denote the reproducing kernels of $\mathcal{B}(\mathfrak{E})$ and $\mathcal{B}(\mathfrak{F})$, respectively.

By Theorem~\ref{Theorem M} and equation \eqref{T}, we have
\[
T_1 \cong \mathbf{P}_{\mathcal{D}(\mathfrak{T}_1)}\mathfrak{T}_1|_{\mathcal{D}(\mathfrak{T}_1)},
\qquad
T_2 \cong \mathbf{P}_{\mathcal{D}(\mathfrak{T}_2)}\mathfrak{T}_2|_{\mathcal{D}(\mathfrak{T}_2)}.
\]
Therefore, it is enough to compare the compressed multiplication operators.

\medskip

\noindent
$(1)\Rightarrow(2)$.
Assume that $T_1$ and $T_2$ are unitarily equivalent. Since the characteristic functions of unitarily equivalent contractions coincide, there exist unitary operators
\[
U:\mathcal D_{T_2}\to \mathcal D_{T_1},
\qquad
U_*:\mathcal D_{T_2^*}\to \mathcal D_{T_1^*}
\]
such that
\[
\Theta_1(z)=U_*\,\Theta_2(z)\,U^{-1},
\qquad z\in \mathbb D.
\]

Now, by Theorem~\ref{Theorem 5.1}, for each $j=1,2$, the characteristic function $\Theta_j(z)$
coincides on $\mathbb D$ with the operator valued function
\[
P_{Y_j}(z)\big|_{\{0\}\oplus \mathcal D_{T_j}}.
\]
Therefore the operator valued functions
\[
P_{Y_1}(z)\big|_{\{0\}\oplus \mathcal D_{T_1}}
\qquad\text{and}\qquad
P_{Y_2}(z)\big|_{\{0\}\oplus \mathcal D_{T_2}}
\]
also coincide on $\mathbb D$.

Further, by Remark~\ref{Remark 5.2}, the reproducing kernels of $\mathcal B(\mathfrak E)$ and
$\mathcal B(\mathfrak F)$ satisfy
\[
K_w^{\mathfrak E}(z)=P_{Y_1}(z)P_{Y_1}(w)^*,
\qquad
K_w^{\mathfrak F}(z)=P_{Y_2}(z)P_{Y_2}(w)^*,
\qquad z,w\in \Omega_a.
\]
Hence the coincidence of the above operator valued functions implies that there exists
a unitary operator
\[
\Xi:Y_2\to Y_1
\]
such that
\begin{equation}\label{eqn5.10}
K_w^{\mathfrak E}(z)=\Xi\,K_w^{\mathfrak F}(z)\,\Xi^*,
\qquad z,w\in \mathbb D.
\end{equation}
Equivalently, by Remark~\ref{Remark 5.3}, this is precisely the kernel form of the coincidence of
the characteristic functions.

We now use the formulas from Theorem~\ref{Theorem C}. Since
\[
E_+(z)=\rho_\beta(z)\,K_\beta^{\mathfrak E}(z)
\bigl(\rho_\beta(\beta)K_\beta^{\mathfrak E}(\beta)\bigr)^{-1/2},
\]
\[
F_+(z)=\rho_\beta(z)\,K_\beta^{\mathfrak F}(z)
\bigl(\rho_\beta(\beta)K_\beta^{\mathfrak F}(\beta)\bigr)^{-1/2},
\]
it follows from \eqref{eqn5.10} that
\[
E_+(z)=\Xi\,F_+(z)\,\Xi^*,
\qquad z\in \mathbb D.
\]
Similarly, using
\[
E_-(z)= -\,\rho_{1/\overline{\beta}}(z)\,K_{1/\overline{\beta}}^{\mathfrak E}(z)\,
\Bigl(-\rho_{1/\overline{\beta}}(1/\overline{\beta})\,K_{1/\overline{\beta}}^{\mathfrak E}(1/\overline{\beta})\Bigr)^{-1/2},
\]
\[
F_-(z)= -\,\rho_{1/\overline{\beta}}(z)\,K_{1/\overline{\beta}}^{\mathfrak F}(z)\,
\Bigl(-\rho_{1/\overline{\beta}}(1/\overline{\beta})\,K_{1/\overline{\beta}}^{\mathfrak F}(1/\overline{\beta})\Bigr)^{-1/2}.
\]
we obtain
\[
E_-(z)=\Xi\,F_-(z)\,\Xi^*,
\qquad z\in \mathbb D.
\]
Therefore,
\begin{equation}\label{eqn5.11}
E_+(z)^{-1}E_-(z)
=
\Xi\,F_+(z)^{-1}F_-(z)\,\Xi^*,
\qquad z\in \mathbb D.
\end{equation}

By the discussion in Section~\ref{Section 2}, the operator valued functions $E_+^{-1}E_-$ and
$F_+^{-1}F_-$ are analytic on $\mathbb D$, extend analytically across
$\Omega_a\cap \mathbb T$ up to removable singularities, and are meromorphic on
$\Omega_a$, with possible non-removable singularities only at a discrete subset of
$\Omega_a\setminus \overline{\mathbb D}$. Let $\Sigma\subset \Omega_a\setminus
\overline{\mathbb D}$ contain all such singularities of both functions. Then both sides
of \eqref{eqn5.11} are analytic on $\Omega_a\setminus \Sigma$, and since they agree on
$\mathbb D$, the identity theorem yields
\[
E_+(z)^{-1}E_-(z)
=
\Xi\,F_+(z)^{-1}F_-(z)\,\Xi^*,
\qquad z\in \Omega_a\setminus \Sigma.
\]
Thus assertion $(2)$ holds with
\[
\Delta=\Xi,
\qquad
\Lambda=\Xi^*.
\]
\medskip

\noindent
$(2)\Rightarrow(1)$.
Assume that there exist unitary operators
\[
\Delta:Y_2\to Y_1,
\qquad
\Lambda:Y_1\to Y_2
\]
and a discrete set $\Sigma\subset \Omega_a\setminus \overline{\mathbb{D}}$ such that
\begin{equation}\label{eq5.10}
E_+(z)^{-1}E_-(z)=\Delta\,F_+(z)^{-1}F_-(z)\,\Lambda,
\qquad z\in \Omega_a\setminus \Sigma.
\end{equation}
Set
\[
\Omega:=\Omega_a\setminus \Sigma.
\]
By construction, both $E_+$ and $F_+$ are invertible on $\Omega$.

By Theorem~\ref{Theorem C},
\[
K_w^{\mathfrak{E}}(z)
=
E_+(z)\,
\frac{I-\bigl(E_+(z)^{-1}E_-(z)\bigr)\bigl(E_+(w)^{-1}E_-(w)\bigr)^*}{\rho_w(z)}
\,E_+(w)^*,
\]
\[
K_w^{\mathfrak{F}}(z)
=
F_+(z)\,
\frac{I-\bigl(F_+(z)^{-1}F_-(z)\bigr)\bigl(F_+(w)^{-1}F_-(w)\bigr)^*}{\rho_w(z)}
\,F_+(w)^*,
\qquad z,w\in \Omega.
\]
Using \eqref{eq5.10} and the fact that $\Lambda$ is unitary, we get
\[
\bigl(E_+^{-1}E_-\bigr)(z)\bigl(E_+^{-1}E_-\bigr)(w)^*
=
\Delta\,
\bigl(F_+^{-1}F_-\bigr)(z)\bigl(F_+^{-1}F_-\bigr)(w)^*
\,\Delta^*.
\]
Hence
\[
I-\bigl(E_+^{-1}E_-\bigr)(z)\bigl(E_+^{-1}E_-\bigr)(w)^*
=
\Delta
\Bigl(
I-\bigl(F_+^{-1}F_-\bigr)(z)\bigl(F_+^{-1}F_-\bigr)(w)^*
\Bigr)
\Delta^*.
\]
Therefore, if we define
\[
W(z):=E_+(z)\,\Delta\,F_+(z)^{-1},
\qquad z\in \Omega,
\]
then
\[
K_w^{\mathfrak{E}}(z)=W(z)\,K_w^{\mathfrak{F}}(z)\,W(w)^*,
\qquad z,w\in \Omega.
\]

Now let
\[
\mathcal{L}_{\mathfrak{F}}
:=
\operatorname{span}\left\{
K_w^{\mathfrak{F}}(\cdot)u:\ w\in \Omega,\ u\in Y_2
\right\}.
\]
Since $\Omega$ is obtained from $\Omega_a$ by deleting only a discrete set, it has an accumulation point in $\Omega_a$. Hence $\mathcal{L}_{\mathfrak{F}}$ is dense in $\mathcal{B}(\mathfrak{F})$.

Define a linear map
\[
\mathcal{U}:\mathcal{L}_{\mathfrak{F}}\to \mathcal{B}(\mathfrak{E})
\]
by
\[
\mathcal{U}
\left(
\sum_{j=1}^n K_{w_j}^{\mathfrak{F}}(\cdot)u_j
\right)
:=
\sum_{j=1}^n
K_{w_j}^{\mathfrak{E}}(\cdot)\,
\bigl(W(w_j)^*\bigr)^{-1}u_j.
\]
From \eqref{eq5.10}, for every $z\in \Omega$,
\begin{equation}\label{ke}
K_{w_j}^{\mathfrak{E}}(z)\bigl(W(w_j)^*\bigr)^{-1}
=
W(z)K_{w_j}^{\mathfrak{F}}(z).
\end{equation}
Therefore, if $f\in \mathcal{L}_{\mathfrak{F}}$, then
\begin{equation}\label{5.11}
(\mathcal{U}f)(z)=W(z)f(z),
\qquad z\in \Omega.
\end{equation}

We claim that $\mathcal{U}$ is an isometry on $\mathcal{L}_{\mathfrak{F}}$. Let
\[
f=\sum_{i=1}^n K_{w_i}^{\mathfrak{F}}(\cdot)u_i,
\qquad
g=\sum_{j=1}^m K_{\lambda_j}^{\mathfrak{F}}(\cdot)v_j.
\]
Using the reproducing property and \eqref{ke}, we obtain
\[
\begin{aligned}
\langle \mathcal{U}f,\mathcal{U}g\rangle_{\mathcal{B}(\mathfrak{E})}
&=
\sum_{i,j}
\left\langle
K_{w_i}^{\mathfrak{E}}(\lambda_j)\bigl(W(w_i)^*\bigr)^{-1}u_i,\,
\bigl(W(\lambda_j)^*\bigr)^{-1}v_j
\right\rangle \\
&=
\sum_{i,j}
\langle K_{w_i}^{\mathfrak{F}}(\lambda_j)u_i,v_j\rangle \\
&=
\langle f,g\rangle_{\mathcal{B}(\mathfrak{F})}.
\end{aligned}
\]
Hence $\mathcal{U}$ extends uniquely to a unitary operator
\[
\mathcal{U}:\mathcal{B}(\mathfrak{F})\to \mathcal{B}(\mathfrak{E}).
\]

We next show that $\mathcal{U}$ intertwines $\mathfrak{T}_2$ and $\mathfrak{T}_1$. Let
$f\in \mathcal{D}(\mathfrak{T}_2)$. Then $zf\in \mathcal{B}(\mathfrak{F})$. From \eqref{5.11},
\[
(\mathcal{U}(\mathfrak{T}_2 f))(z)
=
W(z)\,z f(z)
=
z\,W(z)f(z)
=
z\,(\mathcal{U}f)(z),
\qquad z\in \Omega.
\]
Thus $z(\mathcal{U}f)\in \mathcal{B}(\mathfrak{E})$, and hence $\mathcal{U}f\in \mathcal{D}(\mathfrak{T}_1)$, with
\[
\mathcal{U}\mathfrak{T}_2f=\mathfrak{T}_1\mathcal{U}f.
\]
Applying the same argument to $\mathcal{U}^{-1}$, we obtain
\[
\mathcal{U}\mathcal{D}(\mathfrak{T}_2)=\mathcal{D}(\mathfrak{T}_1).
\]
Since $\mathcal{U}$ is unitary and maps $\mathcal{D}(\mathfrak{T}_2)$ onto $\mathcal{D}(\mathfrak{T}_1)$, we have
\[
\mathcal{U}\mathbf{P}_{\mathcal{D}(\mathfrak{T}_2)}=
\mathbf{P}_{\mathcal{D}(\mathfrak{T}_1)}\mathcal{U}.
\]
Therefore,
\[
\mathcal{U}
\Bigl(
\mathbf{P}_{\mathcal{D}(\mathfrak{T}_2)}\mathfrak{T}_2|_{\mathcal{D}(\mathfrak{T}_2)}
\Bigr)
=
\Bigl(
\mathbf{P}_{\mathcal{D}(\mathfrak{T}_1)}\mathfrak{T}_1|_{\mathcal{D}(\mathfrak{T}_1)}
\Bigr)
\mathcal{U}.
\]
Thus
\[
\mathbf{P}_{\mathcal{D}(\mathfrak{T}_1)}\mathfrak{T}_1|_{\mathcal{D}(\mathfrak{T}_1)}
\qquad\text{and}\qquad
\mathbf{P}_{\mathcal{D}(\mathfrak{T}_2)}\mathfrak{T}_2|_{\mathcal{D}(\mathfrak{T}_2)}
\]
are unitarily equivalent. Since
\[
T_j \cong \mathbf{P}_{\mathcal{D}(\mathfrak{T}_j)}\mathfrak{T}_j|_{\mathcal{D}(\mathfrak{T}_j)},
\qquad j=1,2,
\]
it follows that $T_1$ and $T_2$ are unitarily equivalent. This proves $(2)\Rightarrow(1)$.

The proof is complete.
\end{proof}

\section{The canonical contraction in the de Branges model and the $L^2$ realization}
\label{Section 6}

In this section, we study the quotient
\[
        \Theta_\mathfrak E=E_+^{-1}E_-
\]
associated with the de Branges operator $\mathfrak{E}=(E_-, E_+)$ obtained in Theorem \ref{Theorem C}.
We first prove that this quotient is purely contractive. Then, under the additional
assumption that the de Branges operator is entire, we construct a canonical contraction \(S_\mathfrak E\) on the corresponding de Branges space
and show that $\Theta_{S_\mathfrak E}$ coincides with $\Theta_\mathfrak E=E_+^{-1}E_- $.
We also describe the larger \(L^2\)-ambient space in which this model fits into a unit-disc analogue of the Arov--Dym scattering picture of a $\mathcal{B}(\mathfrak{E})$ space with respect to a de Branges matrix $\mathfrak{E}= (E_-, E_+)$, which is based on a matrix valued reproducing kernel, and has been worked out in Section $8$ of \cite{ArDymone}.

\textbf{I.~Pure contractivity of the quotient \(E_+^{-1}E_-\):}

Let \(T\) be a cnu contraction satisfying the hypotheses of
Theorem~\ref{Theorem M}. Let
\[
       \mathfrak E=(E_-,E_+)
\]
be the de Branges operator associated with \(T\) which is obtained through Theorem~\ref{Theorem C}.
We define
\[
        \Theta_\mathfrak E(z)=E_+(z)^{-1}E_-(z),\qquad z\in\mathbb D .
\]
Here, as usual, the possible removable singularities of \(E_+^{-1}E_-\) are
understood to be removed. By the definition of a de Branges operator, the quotient \(E_+^{-1}E_-\) is
contractive on the unit disc. Hence
\[
        I-\Theta_\mathfrak E(z)^*\Theta_\mathfrak E(z)\geq 0,
        \qquad z\in\mathbb D .
\]
Therefore \(\Theta_\mathfrak E\) belongs to the Schur class. We now prove that \(\Theta_\mathfrak E\) is purely contractive. Recall that an operator valued
Schur function \(\Theta\) is called purely contractive if
\[
        \|\Theta(0)x\|<\|x\|
        \qquad \text{for every } x \neq 0 .
\]

\begin{lemma}
Let \(\mathfrak E=(E_-,E_+)\) be the de Branges operator as obtained in Theorem \ref{Theorem C}. Then
\[
\Theta_\mathfrak E(z)=E_+(z)^{-1}E_-(z), \qquad z\in \mathbb D,
\]
is purely contractive.
\end{lemma}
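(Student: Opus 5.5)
The plan is to argue by contradiction. Suppose that $\|\Theta_\mathfrak E(0)x\|=\|x\|$ for some $x\neq 0$. I will show that this forces $\Theta_\mathfrak E(z)x$ to be an isometric constant in $z$. Then I will use the reproducing kernel of $\mathbf{H}$ to show that $\Theta_\mathfrak E$ is a \emph{strict} contraction at points near $0$ where $E_+$ is invertible. These two facts are incompatible.

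\textbf{Step 1 (maximum principle).} Put $\phi(z)=\Theta_\mathfrak E(z)x$. This is a $Y$-valued analytic function on $\mathbb D$ with $\|\phi(z)\|\le\|x\|$ and $\|\phi(0)\|=\|x\|$. For $u=\phi(0)/\|x\|$, the scalar function $\langle \phi(z),u\rangle$ is analytic and bounded by $\|x\|$, and it attains the value $\|x\|$ at $0$. By the scalar maximum principle it is constant, so $\langle\phi(z),u\rangle=\|x\|$ for all $z$. Combining this with $\|\phi(z)\|\le \|x\|$ and the equality case of Cauchy--Schwarz gives $\phi(z)=\phi(0)$. Hence $\|\Theta_\mathfrak E(z)x\|=\|x\|$ for every $z\in\mathbb D$.

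\textbf{Step 2 (strict contractivity near $0$).} By the Fredholm analytic theorem, $E_+$ is invertible on $\mathbb D$ off a discrete set, so I can pick $z\in\mathbb D$ with $E_+(z)$ invertible. By Remark~\ref{Remark 5.2}, $K_z(z)=P_Y(z)P_Y(z)^*$. Since $P_Y(z)=\delta_z\Psi$ maps onto $Y$ by \eqref{Equation 4.1}, $P_Y(z)^*$ is bounded below. Hence $K_z(z)$ is a positive invertible operator on $Y$. On the other hand, exactly as in the proof of Theorem~\ref{Theorem HSC},
\[
K_z(z)=E_+(z)\,\frac{I-\Theta_\mathfrak E(z)\Theta_\mathfrak E(z)^*}{1-|z|^2}\,E_+(z)^* .
\]
Since $E_+(z)$ is invertible, $I-\Theta_\mathfrak E(z)\Theta_\mathfrak E(z)^*$ is positive and invertible. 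Therefore $\|\Theta_\mathfrak E(z)\|=\|\Theta_\mathfrak E(z)^*\|<1$, which contradicts Step 1. Consequently $\|\Theta_\mathfrak E(0)x\|<\|x\|$ for all $x\neq0$.

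\textbf{Expected obstacle.} The delicate point is that $E_+(0)$ itself may fail to be invertible. Indeed, $E_+(0)$ is a multiple of $K_\beta(0)$, whose kernel is $M_0\cap M_\beta^\perp$, and this is only known to be finite dimensional. This is why I do not argue directly at $0$. Step 1 transports the problem to a nearby point where $E_+$ is invertible, using only the Schur property and the removable-singularity discussion of Section~\ref{Section 2}.

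A second point to check carefully is the identification $K_z(z)=P_Y(z)P_Y(z)^*$ together with the surjectivity of $P_Y(z)$ onto $Y$. Surjectivity holds because every $\phi\in Y$ satisfies $P_Y(z)\phi=\phi$, by the uniqueness in the decomposition \eqref{D}.
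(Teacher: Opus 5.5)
Your proof is correct and follows the same route as the paper. The maximum-principle argument forces $\Theta_\mathfrak E(z)x\equiv\Theta_\mathfrak E(0)x$, and this is combined with strict contractivity of $\Theta_\mathfrak E$ at one point, obtained from $K_z(z)=E_+(z)\frac{I-\Theta_\mathfrak E(z)\Theta_\mathfrak E(z)^*}{1-|z|^2}E_+(z)^*$. The only difference is the point used: the paper works at $z=\beta$, where $K_\beta(\beta)$ is invertible by Step 2 of the proof of Theorem~\ref{Theorem M} and $E_+(\beta)=(\rho_\beta(\beta)K_\beta(\beta))^{1/2}$ is invertible by the formula in Theorem~\ref{Theorem C}, whereas you get invertibility of $K_z(z)$ at every point from the surjectivity of $P_Y(z)$ onto $Y$ (so you could simply have taken $z=\beta$ and skipped the analytic Fredholm step).
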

\begin{proof}
By Theorem \ref{Theorem C}, the functions \(E_+\) and \(E_-\) are given by
\[
E_+(z)
=
\rho_\beta(z)K^\mathfrak E_\beta(z)
\left(\rho_\beta(\beta)K^\mathfrak E_\beta(\beta)\right)^{-1/2},
\]
and
\[
E_-(z)
=
-\rho_{1/\overline{\beta}}(z)K^\mathfrak E_{1/\overline{\beta}}(z)
\left(
-\rho_{1/\overline{\beta}}(1/\overline{\beta})
K^\mathfrak E_{1/\overline{\beta}}(1/\overline{\beta})
\right)^{-1/2}.
\]

In the proof of Theorem \ref{Theorem M}, it is verified that \(K^\mathfrak E_\beta(\beta)\) is invertible.
Since
\[
\rho_\beta(\beta)=1-|\beta|^2>0,
\]
the formula for \(E_+(\beta)\) implies that \(E_+(\beta)\) is invertible.

The reproducing kernel corresponding to the de Branges operator \(\mathfrak E\) is
\[
K^\mathfrak E_w(z)
=
\frac{
E_+(z)E_+(w)^*
-
E_-(z)E_-(w)^*
}{
1-z\overline w
}.
\]
Since
\[
E_-(z)=E_+(z)\Theta_\mathfrak E(z),
\]
we obtain
\[
K^\mathfrak E_w(z)
=
E_+(z)
\frac{
I-\Theta_\mathfrak E(z)\Theta_\mathfrak E(w)^*
}{
1-z\overline w
}
E_+(w)^*.
\]
Taking \(z=w=\beta\), we get
\[
K^\mathfrak E_\beta(\beta)
=
\frac{
E_+(\beta)
\left(I-\Theta_\mathfrak E(\beta)\Theta_\mathfrak E(\beta)^*\right)
E_+(\beta)^*
}{
1-|\beta|^2
}.
\]
Equivalently,
\[
I-\Theta_\mathfrak E(\beta)\Theta_\mathfrak E(\beta)^*
=
(1-|\beta|^2)
E_+(\beta)^{-1}
K^\mathfrak E_\beta(\beta)
E_+(\beta)^{-*}.
\]
Here
\[
E_+(\beta)^{-*}:=\left(E_+(\beta)^{-1}\right)^*.
\]
Since \(K^\mathfrak E_\beta(\beta)\) and \(E_+(\beta)\) are invertible, it follows that
\[
I-\Theta_\mathfrak E(\beta)\Theta_\mathfrak E(\beta)^*
\]
is positive and invertible. Therefore there exists a constant \(c>0\) such that
\[
I-\Theta_\mathfrak E(\beta)\Theta_\mathfrak E(\beta)^*\geq cI.
\]
Hence
\[
\Theta_\mathfrak E(\beta)\Theta_\mathfrak E(\beta)^*\leq (1-c)I.
\]
Consequently,
\[
\|\Theta_\mathfrak E(\beta)\|<1.
\]

We now show that \(\Theta_\mathfrak E\) is purely contractive. Suppose, on the contrary, that
\(\Theta_\mathfrak E\) is not purely contractive. Then there exists a non-zero vector \(x\) such
that
\[
\|\Theta_\mathfrak E(0)x\|=\|x\|.
\]
Since \(\Theta_\mathfrak E\) is contractive on \(\mathbb D\), we have
\[
\|\Theta_\mathfrak E(z)x\|\leq \|x\|, \qquad z\in \mathbb D.
\]
Define the scalar analytic function
\[
\varphi(z)
=
\frac{\langle \Theta_\mathfrak E(z)x,\Theta_\mathfrak E(0)x\rangle}{\|x\|^2},
\qquad z\in \mathbb D.
\]
Then
\[
|\varphi(z)|
\leq
\frac{\|\Theta_\mathfrak E(z)x\|\,\|\Theta_\mathfrak E(0)x\|}{\|x\|^2}
\leq 1,
\qquad z\in \mathbb D.
\]
Moreover,
\[
\varphi(0)
=
\frac{\langle \Theta_\mathfrak E(0)x,\Theta_\mathfrak E(0)x\rangle}{\|x\|^2}
=
1.
\]
By the maximum modulus principle, \(\varphi\equiv 1\) on \(\mathbb D\). Therefore
equality holds in the Cauchy--Schwarz inequality for every \(z\in\mathbb D\), and
we get
\[
\Theta_\mathfrak E(z)x=\Theta_\mathfrak E(0)x, \qquad z\in \mathbb D.
\]
In particular,
\[
\Theta_\mathfrak E(\beta)x=\Theta_\mathfrak E(0)x.
\]
Thus
\[
\|\Theta_\mathfrak E(\beta)x\|=\|\Theta_\mathfrak E(0)x\|=\|x\|.
\]
This contradicts the strict inequality
\[
\|\Theta_\mathfrak E(\beta)\|<1.
\]
Therefore no such non-zero vector \(x\) exists. Hence
\[
\|\Theta_\mathfrak E(0)x\|<\|x\|,
\qquad x \neq 0.
\]
Thus \(\Theta_\mathfrak E=E_+^{-1}E_-\) is purely contractive.
\end{proof}

\textbf{II.~The Hardy-space model associated with \(\mathfrak E\):}

We now describe the canonical Hardy-space model determined by the de Branges
operator \(\mathfrak E=(E_-,E_+)\). Throughout this subsection and later, we assume that \(\mathfrak E\) is
entire, that is,
\[
        \Omega=\mathbb C .
\]
Consequently,
\[
        \Omega\cap\mathbb T=\mathbb T .
\]
 The boundary values of \(\Theta_\mathfrak E\) satisfy
\[
        \Theta_\mathfrak E(\zeta)^*\Theta_\mathfrak E(\zeta)=I,
        \qquad
        \Theta_\mathfrak E(\zeta)\Theta_\mathfrak E(\zeta)^*=I,
\]
for all \(\zeta\in\mathbb T\). Hence, by the previous subsection, \(\Theta_\mathfrak E\) is a purely contractive inner Schur
function.

Let
\[
       \mathcal B_{\mathbb D}(\mathfrak E):=
        \{f\vert_{\mathbb D}: f\in \mathcal B(\mathfrak E)\}.
\]
Since the functions in \(\mathcal B(\mathfrak E)\) are analytic on \(\mathbb C\), the restriction
map
\[
        R_{\mathbb D}:\mathcal B(\mathfrak E)\longrightarrow \mathcal B_{\mathbb D}(\mathfrak E),
        \qquad
        R_{\mathbb D}f=f\vert_{\mathbb D},
\]
is injective by the identity theorem. We give \(\mathcal B_{\mathbb D}(\mathfrak E)\) the
transported norm
\[
        \|f\vert_{\mathbb D}\|_{\mathcal B_{\mathbb D}(\mathfrak E)}
        :=
        \|f\|_{\mathcal B(\mathfrak E)} .
\]
With this norm, \(R_{\mathbb D}\) is a unitary map from \(\mathcal B(\mathfrak E)\) onto
\(\mathcal B_{\mathbb D}(\mathfrak E)\). Thus \(\mathcal B(\mathfrak E)\) and \(\mathcal B_{\mathbb D}(\mathfrak E)\) are not different
Hilbert spaces for our purposes; \(\mathcal B_{\mathbb D}(\mathfrak E)\) is only the unit-disc
realization of \(\mathcal B(\mathfrak E)\).

By the Hardy-space characterization of the de Branges space (see Theorem \ref{Theorem HSC}), multiplication
by \(E_+\) gives a unitary identification between \(\mathcal B_{\mathbb D}(\mathfrak E)\) and the
de Branges--Rovnyak space \(\mathcal H(\Theta_\mathfrak E)\). Hence
\[
        \mathcal B_{\mathbb D}(\mathfrak E)=E_+\mathcal H(\Theta_\mathfrak E).
\]
Since \(\Theta_\mathfrak E\) is inner,
\[
        \mathcal H(\Theta_\mathfrak E)
        =
        K_{\Theta_\mathfrak E}
        :=
        H_Y^2(\mathbb D)\ominus \Theta_\mathfrak E H_Y^2(\mathbb D),
\]
where $Y$ is the coefficient space as in Theorem \ref{Theorem C}. 
Therefore
\[
        \mathcal B_{\mathbb D}(\mathfrak E)=E_+K_{\Theta_\mathfrak E}.
\]

For the purpose of defining the canonical compression, we introduce the following space
\[
        H_\mathfrak E^2:=E_+H_Y^2(\mathbb D).
\]
Thus
\[
        H_\mathfrak E^2=
        \{E_+g:g\in H_Y^2(\mathbb D)\},
\]
with norm
\[
        \|E_+g\|_{H_\mathfrak E^2}:=\|g\|_{H_Y^2(\mathbb D)}.
\]
The map
\[
        W_\mathfrak E:H_\mathfrak E^2\longrightarrow H_Y^2(\mathbb D),
        \qquad
        W_\mathfrak E(E_+g)=g,
\]
is unitary. Since
\[
        \mathcal B_{\mathbb D}(\mathfrak E)=E_+K_{\Theta_\mathfrak E},
\]
we see that \(\mathcal B_{\mathbb D}(\mathfrak E)\) is a closed subspace of \(H_\mathfrak E^2\).

Let
\[
        M_z:H_\mathfrak E^2\longrightarrow H_\mathfrak E^2
\]
be multiplication operator by \(z\). This is well-defined because
\[
        M_z(E_+g)=zE_+g=E_+(zg),
\]
and \(zg\in H_Y^2(\mathbb D)\). Moreover, under the unitary map \(W_\mathfrak E\), the
operator \(M_z\) is unitarily equivalent to the unilateral shift on
\(H_Y^2(\mathbb D)\). Hence \(M_z\) is an isometry on \(H_\mathfrak E^2\).

Let
\[
        \mathbf P_{\mathcal B_{\mathbb D}(\mathfrak E)}:H_\mathfrak E^2\longrightarrow \mathcal B_{\mathbb D}(\mathfrak E)
\]
denote the orthogonal projection. We now define the canonical compression
associated with \(\mathfrak E\) by
\[
        S_\mathfrak E:=\mathbf P_{\mathcal B_{\mathbb D}(\mathfrak E)}M_z\vert_{\mathcal B_{\mathbb D}(\mathfrak E)} .
\]
Equivalently,
\[
        S_\mathfrak E f=\mathbf P_{\mathcal B_{\mathbb D}(\mathfrak E)}(zf),
        \qquad f\in \mathcal B_{\mathbb D}(\mathfrak E).
\]
Since \(S_\mathfrak E\) is the compression of an isometry, \(S_\mathfrak E\) is a contraction.

Under the unitary map \(W_\mathfrak E\), the subspace \(\mathcal B_{\mathbb D}(\mathfrak E)=E_+K_{\Theta_\mathfrak E}\)
is mapped onto \(K_{\Theta_\mathfrak E}\). Therefore
\[
        W_\mathfrak ES_\mathfrak EW_\mathfrak E^{-1}
        =
        \mathbf P_{K_{\Theta_\mathfrak E}}M_z\vert_{K_{\Theta_\mathfrak E}} .
\]
Let
\[
        S_{\Theta_\mathfrak E}:=
        \mathbf P_{K_{\Theta_\mathfrak E}}M_z\vert_{K_{\Theta_\mathfrak E}} .
\]
Then
\[
        S_\mathfrak E\cong S_{\Theta_\mathfrak E}.
\]
The operator \(S_{\Theta_\mathfrak E}\) is the standard Sz.-Nagy--Foias model operator
corresponding to the purely contractive inner Schur function \(\Theta_\mathfrak E\) (see \cite[Chapter VI, Theorem 3.1]{Nagy-Foais}).
Consequently, its characteristic function coincides with \(\Theta_\mathfrak E\). Hence $\Theta_{S_\mathfrak E}$ coincides with $\Theta_\mathfrak E=E_+^{-1}E_-$.
Thus the de Branges operator \(\mathfrak E=(E_-,E_+)\) canonically determines a
contraction \(S_\mathfrak E\) whose Sz.-Nagy--Foias characteristic function coincides with the
operator valued function \(E_+^{-1}E_-\).

The discussions hereafter are motivated by Section $8$ of Arov and Dym \cite{ArDymone}. Though the methods are same, we include these discussions in the disc set up for the sake of completeness.

\textbf{III.~The \(L^2\)-ambient space:}

The space \(H_\mathfrak E^2=E_+H_Y^2(\mathbb D)\) is the correct ambient
space for the canonical model compression \(S_\mathfrak E\). However, in order to make
the connection with the Arov--Dym construction and the Lax--Phillips
scattering picture \cite{LP}, one has to pass to a larger \(L^2\)-space.

Let \(L_Y^2(\mathbb T)\) be the usual \(Y\)-valued \(L^2\)-space on the unit
circle. We define
\[
        L_\mathfrak E^2(\mathbb T):=E_+L_Y^2(\mathbb T).
\]
That is,
\[
        L_\mathfrak E^2(\mathbb T)
        =
        \{E_+g:g\in L_Y^2(\mathbb T)\}.
\]
We equip this space with the transported norm
\[
        \|E_+g\|_{L_\mathfrak E^2}:=\|g\|_{L_Y^2(\mathbb T)}.
\]
With this norm, the map
\[
        M_{E_+}:L_Y^2(\mathbb T)\longrightarrow L_\mathfrak E^2(\mathbb T),
        \qquad
        M_{E_+}g=E_+g,
\]
is unitary.

We note that this definition avoids any difficulty caused by possible
non-invertibility of \(E_+\) on the boundary. Indeed, since \(E_+\) is
Fredholm-valued and invertible at least at one point, the analytic Fredholm
theorem implies that the set
\[
        \Sigma_+:=\{\lambda\in\mathbb C:E_+(\lambda)
        \text{ is not invertible}\}
\]
is discrete. Hence \(\Sigma_+\cap\mathbb T\) is discrete. Therefore \(E_+^{-1}\)
is defined a.e. on \(\mathbb T\), and the values on \(\Sigma_+\cap\mathbb T\)
are irrelevant for \(L^2\)-purposes.

Since \(E_+(\zeta)\) is invertible for a.e. \(\zeta\in\mathbb T\), then the above
transported norm can be written in weighted form. Namely, put
\[
        \Delta_\mathfrak E(\zeta)
        =
        \big(E_+(\zeta)E_+(\zeta)^*\big)^{-1}.
\]
Then, for \(f=E_+g\),
\[
\begin{aligned}
        \|f\|_{L_\mathfrak E^2}^2
        &=
        \|g\|_{L_Y^2(\mathbb T)}^2                                            \\
        &=
        \int_{\mathbb T}\|g(\zeta)\|_Y^2\,dm(\zeta)                            \\
        &=
        \int_{\mathbb T}
        \left\langle
        \big(E_+(\zeta)E_+(\zeta)^*\big)^{-1}f(\zeta),
        f(\zeta)
        \right\rangle_Y\,dm(\zeta)                                             \\
        &=
        \int_{\mathbb T}
        \langle \Delta_\mathfrak E(\zeta)f(\zeta),f(\zeta)\rangle_Y\,dm(\zeta).
\end{aligned}
\]

The space \(H_\mathfrak E^2\) introduced above is precisely the positive Hardy part of
\(L_\mathfrak E^2(\mathbb T)\):
\[
        H_\mathfrak E^2=E_+H_Y^2(\mathbb D)\subset L_\mathfrak E^2(\mathbb T).
\]
Moreover,
\[
        \mathcal B_{\mathbb D}(\mathfrak E)=E_+K_{\Theta_\mathfrak E}\subset H_\mathfrak E^2\subset L_\mathfrak E^2(\mathbb T).
\]
Therefore the correct hierarchy of spaces is
\[
        \mathcal B_{\mathbb D}(\mathfrak E)\subset H_\mathfrak E^2\subset L_\mathfrak E^2(\mathbb T).
\]

Let
\[
        H_{Y,-}^2:=L_Y^2(\mathbb T)\ominus H_Y^2(\mathbb D)
\]
be the negative Hardy space. Since \(\Theta_\mathfrak E\) is inner,
\[
        H_Y^2(\mathbb D)
        =
        K_{\Theta_\mathfrak E}\oplus \Theta_\mathfrak EH_Y^2(\mathbb D).
\]
Also,
\[
        L_Y^2(\mathbb T)
        =
        H_{Y,-}^2\oplus H_Y^2(\mathbb D).
\]
Consequently,
\[
        L_Y^2(\mathbb T)
        =
        H_{Y,-}^2
        \oplus
        K_{\Theta_\mathfrak E}
        \oplus
        \Theta_\mathfrak EH_Y^2(\mathbb D).
\]
Transporting this orthogonal decomposition by the unitary map \(M_{E_+}\), we
obtain
\[
        L_\mathfrak E^2(\mathbb T)
        =
        E_+H_{Y,-}^2
        \oplus
        E_+K_{\Theta_\mathfrak E}
        \oplus
        E_+\Theta_\mathfrak EH_Y^2(\mathbb D).
\]
Since
\[
        E_+K_{\Theta_\mathfrak E}=\mathcal B_{\mathbb D}(\mathfrak E)
\]
and
\[
        E_+\Theta_\mathfrak EH_Y^2(\mathbb D)
        =
        E_-H_Y^2(\mathbb D),
\]
we get the decomposition
\[
        L_\mathfrak E^2(\mathbb T)
        =
        E_+H_{Y,-}^2
        \oplus
        \mathcal B_{\mathbb D}(\mathfrak E)
        \oplus
        E_-H_Y^2(\mathbb D).
\]
Equivalently, if we put
\[
        \mathcal D_-^\mathfrak E:=E_+H_{Y,-}^2
\]
and
\[
        \mathcal D_+^\mathfrak E:=E_-H_Y^2(\mathbb D),
\]
then
\[
        L_\mathfrak E^2(\mathbb T)
        =
        \mathcal D_-^\mathfrak E
        \oplus
       \mathcal B_{\mathbb D}(\mathfrak E)
        \oplus
        \mathcal D_+^\mathfrak E.
\]
This is the unit-disc analogue of the Arov--Dym decomposition.

\textbf{IV.~The Lax-Phillips scattering operator and the unitary dilation picture:}

The two unitary maps
\[
        \mathcal F_-:L_\mathfrak E^2(\mathbb T)\longrightarrow L_Y^2(\mathbb T),
        \qquad
        \mathcal F_-f=E_+^{-1}f,
\]
and
\[
        \mathcal F_+:L_\mathfrak E^2(\mathbb T)\longrightarrow L_Y^2(\mathbb T),
        \qquad
        \mathcal F_+f=E_-^{-1}f,
\]
are the unit-disc analogues of the generalized Fourier transforms associated with a Lax-Phillips scheme. For the basic definitions of Lax-Phillips scheme, we refer to \cite{LP} and \cite[Section 8]{ArDymone}.

Indeed, if \(f=E_+g\), then
\[
        \mathcal F_-f=g.
\]
Also, since
\[
        E_-=E_+\Theta_\mathfrak E,
\]
we have
\[
        f=E_+g=E_-\Theta_\mathfrak E^*g
\]
a.e. on \(\mathbb T\). Hence
\[
        \mathcal F_+f=\Theta_\mathfrak E^*g.
\]
Therefore
\[
        \mathcal F_+=M_{\Theta_\mathfrak E}^*\mathcal F_-,
\]
where \(M_{\Theta_\mathfrak E}\) denotes multiplication by \(\Theta_\mathfrak E\) on
\(L_Y^2(\mathbb T)\).

The corresponding scattering operator is
\[
        \mathcal F_-\mathcal F_+^*.
\]
For \(g\in L_Y^2(\mathbb T)\), we have
\[
        \mathcal F_+^*g=E_-g.
\]
Therefore
\[
        \mathcal F_-\mathcal F_+^*g
        =
        E_+^{-1}E_-g
        =
        \Theta_\mathfrak Eg.
\]
Hence
\[
        \mathcal F_-\mathcal F_+^*
        =
        M_{\Theta_\mathfrak E}.
\]
Thus the scattering operator is precisely
\[
        \Theta_\mathfrak E=E_+^{-1}E_-.
\]

This shows that the same quotient \(E_+^{-1}E_-\) plays two roles: it is the
analytic function defining the de Branges kernel, and it is also the scattering
operator associated with a Lax-Phillips scheme.

Define
\[
        U_\mathfrak E:L_\mathfrak E^2(\mathbb T)\longrightarrow L_\mathfrak E^2(\mathbb T)
\]
by
\[
        (U_\mathfrak Ef)(\zeta)=\zeta f(\zeta),
        \qquad \zeta\in\mathbb T.
\]
Equivalently, if \(f=E_+g\), then
\[
        U_\mathfrak E(E_+g)=E_+(\zeta g).
\]
Under the unitary map \(\mathcal F_-\), the operator \(U_\mathfrak E\) becomes ordinary
multiplication by \(\zeta\) on \(L_Y^2(\mathbb T)\):
\[
        \mathcal F_-U_\mathfrak E\mathcal F_-^{-1}=M_\zeta .
\]
Thus \(U_\mathfrak E\) is unitary.

The subspaces
\[
        \mathcal D_-^\mathfrak E=E_+H_{Y,-}^2
\]
and
\[
        \mathcal D_+^\mathfrak E=E_-H_Y^2(\mathbb D)
\]
satisfy the standard Lax--Phillips invariance relations
\[
        U_\mathfrak E^{-1}\mathcal D_-^\mathfrak E\subseteq \mathcal D_-^\mathfrak E
\]
and
\[
        U_\mathfrak E\mathcal D_+^\mathfrak E\subseteq \mathcal D_+^\mathfrak E.
\]
Indeed,
\[
        \zeta^{-1}H_{Y,-}^2\subseteq H_{Y,-}^2
\]
and
\[
        \zeta H_Y^2(\mathbb D)\subseteq H_Y^2(\mathbb D).
\]

Moreover, the compression of \(U_\mathfrak E\) to the central space
\(\mathcal B_{\mathbb D}(\mathfrak E)\) is the same canonical contraction considered above:
\[
        S_\mathfrak E=\mathbf P_{\mathcal B_{\mathbb D}(\mathfrak E)}U_\mathfrak E\vert_{\mathcal B_{\mathbb D}(\mathfrak E)}.
\]
Since \(U_\mathfrak E\) is a unitary operator on \(L_\mathfrak E^2(\mathbb T)\), this gives a
unitary dilation picture for the contraction \(S_\mathfrak E\).

\bigskip
\textbf{Acknowledgements:} 
This work is partially supported by the FIST program of the Department of Science and Technology, Government of India, Reference No. SR/FST/MS-I/2018/22(C). The research of the second author is supported by the MATRICS grant of SERB
(MTR/2023/001324).\\

\textbf{Data Availability:} No data was used for the research described in this article.\\ 

\textbf{Conflict of interest:} The authors declare that they have no conflict of interest.

\end{document}